\theoremstyle{definition}
\newtheorem{theorem}[equation]{Theorem}
\newtheorem{lemma}[equation]{Lemma}
\newtheorem{proposition}[equation]{Proposition}
\newtheorem{corollary}[equation]{Corollary}
\newtheorem{example}[equation]{Example}
\newtheorem{question}[equation]{Question}
\newtheorem{remark}[equation]{Remark}
\newcommand{\bC}{\mathbf{C}}
\newcommand{\bA}{\mathbf{A}}
\newcommand{\Spec}{\operatorname{\mathsf{Spec}}}
\newcommand{\HP}{\operatorname{HP}}
\newcommand{\Spf}{\operatorname{\mathsf{Spf}}}
\newcommand{\cO}{\mathcal{O}}
\newcommand{\bZ}{\mathbf{Z}}
\newcommand{\bH}{\mathbf{H}}
\newcommand{\Aut}{\operatorname{Aut}}
\newcommand{\caD}{\mathcal{D}}
\newcommand{\onto}{\twoheadrightarrow}
\newcommand{\Lie}{\operatorname{Lie}}
\newcommand{\mfg}{\mathfrak{g}}
\newcommand{\SL}{\operatorname{\mathsf{SL}}}
\newcommand{\Proj}{\operatorname{Proj}}
\newcommand{\Sym}{\operatorname{Sym}}
\newcommand{\img}{\operatorname{im}}
\newcommand{\Hom}{\operatorname{Hom}}
\newcommand{\End}{\operatorname{End}}
\newcommand{\Id}{\operatorname{Id}}
\newcommand{\Mat}{\operatorname{Mat}}
\newcommand{\rk}{\operatorname{rk}}
\newcommand{\pr}{\operatorname{pr}}
\newcommand{\Sp}{\operatorname{Sp}}
\newcommand{\GL}{\operatorname{GL}}
\newcommand{\Stab}{\operatorname{Stab}}
\newcommand{\Weyl}{\operatorname{Weyl}}
\newcommand{\Eu}{\operatorname{Eu}}
\newcommand{\gr}{\operatorname{gr}}
\newcommand{\ad}{\operatorname{ad}}
\begin{document}
\author{Travis Schedler\thanks{Department of Mathematics, University of Texas at Austin; 1
  University Station C1200, Austin, TX 78712-0257;
  trasched@gmail.com}} 
\title{Equivariant slices for symplectic
  cones}

\date{\today}
\maketitle

\begin{abstract}
  The Darboux-Weinstein decomposition is a central result in the
  theory of complex Poisson (degenerate symplectic) varieties, which gives a
  local decomposition at a point as a product of the formal
  neighborhood of the symplectic leaf through the point and a formal
  slice.

  Recently, conical symplectic resolutions, and more generally,
  Poisson cones, have been very actively studied in representation
  theory and algebraic geometry.  This motivates asking for a
  $\bC^\times$-equivariant version of the Darboux-Weinstein
  decomposition.

  In this paper, we develop such a theory, prove basic results on
  their existence and uniqueness, study examples (quotient
  singularities and hypertoric varieties), and applications to
  noncommutative algebra (their quantization). We also pose some
  natural questions on existence and quantization of
  $\bC^\times$-actions on slices to conical symplectic leaves.
\end{abstract}

\section{Introduction}
A \emph{conical} complex symplectic or Poisson variety is one which is
equipped with a contracting $\bC^\times$-action which does not
necessarily preserve the symplectic form or Poisson bracket, but
rather rescales it.

Recently, the theory of conical complex symplectic resolutions
($\bC^\times$-equivariant resolutions of singularities of a cone by a
symplectic variety), and more generally, Poisson cones, has been
widely studied not only in mathematics, but also in physics, and has
applications and connections to representation theory, symplectic
geometry, quantum cohomology, mirror symmetry, equivariant cohomology,
and other subjects (see, e.g., \cite{BPW-qcsr,BLPW-qcsr2} for an
overview of some of these). We remark that these are very special varieties: for
example, conical symplectic resolutions were shown recently to be
rigid (in fact, that there are finitely many of a given dimension and
bound on degree), in \cite{Nam-ftss}.

One of the fundamental tools in the study of complex Poisson (or
degenerate symplectic) varieties (not necessarily conical) is the
Darboux-Weinstein theorem (\cite{We}; see also \cite[Proposition
3.3]{Kalss}), which gives a local structure for such varieties.
Namely, recall that a symplectic leaf $Z$ of such a variety $X$ is
defined as a maximal connected locally closed subvariety on which the
tangent space $T_zZ$ is equal to the span of the Hamiltonian vector
fields at $z$, for all $z \in Z$.  Then the Darboux-Weinstein theorem
says that a formal neighborhood $\hat X_z$ of $z \in Z$ splits as a
product $\hat Z_z \times S$, for some formal transverse slice $S$ to
$Z$ at $z$.  This has been of fundamental use throughout the
literature, perhaps even more so recently, in understanding the
structure of the varieties and their quantization.  For example, one
can attach to the irreducible representations of the quantization
their support, which are the closures of symplectic leaves. This
allows one to apply geometry of $X$ to the representation theory of
its quantization. This idea perhaps appeared first in Lie theory,
where one can attach to each irreducible representation of a
semisimple Lie algebra a nilpotent (co)adjoint orbit, which is the
associated graded ideal of its kernel (i.e., the associated primitive
ideal) in the universal enveloping algebra, and study the primitive
ideals with fixed support. Losev showed in, e.g., \cite{Losqsa,
  Los-csra}, that, by quantizing the Darboux-Weinstein decomposition,
one can relate irreducible representations with a given support to
finite-dimensional representations of the quantization of the slice
$S$.

The main idea of the present paper is to replace a formal neighborhood
of a point $z \in X$ by a formal neighborhood of the punctured
line $\bC^\times \cdot z$, which allows one to generalize the
Darboux-Weinstein decomposition to a $\bC^\times$-equivariant one.

We say that $X$ admits a symplectic resolution if $X$ is normal and
there is a projective resolution of singularities $\tilde X \to X$
with $\tilde X$ symplectic.
% We will always assume (in the presence of a $\bC^\times$ action on
% $X$) that the resolution also admits a $\bC^\times$ action for which
% the map is $\bC^\times$-equivariant.
A significantly weaker condition is that $X$ be a symplectic
singularity \cite[Definition 1.1]{Beauss}: this requires that $X$ be
normal, symplectic on its smooth locus, and that the pullback of the
symplectic form under any (equivalently, every) resolution $\tilde X
\to X$ extend to a regular (but possibly degenerate) two-form on
$\tilde X$.

Let $\Delta^m := \Spf \bC[\![x_1,\ldots,x_m]\!]$ be the formal disk
(the reader preferring the analytic setting could instead work with a
small $m$-disk, modifying the statements accordingly).

\begin{theorem}\label{t:pre-main}
  Let $X$ be a complex variety with a $\bC^\times$-action and a homogeneous
  Poisson structure of degree $-k$.  Let $Y$ be a $\bC^\times$-stable
  leaf,\footnote{If $X$ is a union of finitely many symplectic leaves
    (e.g., under assumption (iii)), then this hypothesis is
    unnecessary, since homogenity of the Poisson structure implies
    that all symplectic leaves are all $\bC^\times$-stable.}  and $y
  \in Y$ a point with trivial stabilizer under $\bC^\times$.  Then:
\begin{enumerate}
\item[(i)] There is a decomposition of $\bC^\times$-formal
  schemes,
\begin{equation}\label{e:ehat}
\hat X_{\bC^\times \cdot y} \cong \hat Y_{\bC^\times \cdot y} 
\times S,
\end{equation}
for some formal scheme $S$ (equipped here with a trivial $\bC^\times$
action). 
\item[(ii)] $S$ is equipped with a canonical Poisson structure, and
  $\hat Y_{\bC^\times \cdot y} \cong \bC^\times \times
  \Delta^{\dim Y-1}$, equipped with a standard symplectic structure
  (see Theorem \ref{t:e-db} below). 
  % The projection to the first factor, $\hat X_{\bC^\times \cdot y}
  % \to \hat Y_{\bC^\times \cdot y}$, is Poisson.
\item[(iii)] If $X$ admits a symplectic resolution (or more generally
  is a symplectic singularity in the sense of \cite{Beauss}), then
  the isomorphism class of $\hat X_{\bC^\times \cdot y}$ as a formal
  $\bC^\times$-Poisson scheme is uniquely determined by the Poisson
  isomorphism class of $S$ and the dimension of $X$.
% (i.e., if $(X',Y',y',S')$ satisfy
%the same hypotheses as $(X,Y,y,S)$, $S' \cong S$ as formal Poisson schemes, and $\dim X = \dim X'$, then $\hat X'_{\bC^\times \cdot y'} \cong \hat X_{\bC^\times \cdot y}$ as formal $\bC^\times$-Poisson schemes).
\item[(iv)] If the condition in (iii) holds and also $S$ admits a
  $\bC^\times$ action giving its Poisson structure degree
  $-k$, then there is a $\bC^\times$-Poisson isomorphism of the form
  \eqref{e:ehat},
$\hat X_{\bC^\times \cdot y} \cong \hat
  Y_{\bC^\times \cdot y} \times S$ (i.e., the Poisson bivector
  on $\hat X_{\bC^\times \cdot y}$ is actually the sum of the
  canonical Poisson bivectors on $\hat Y_{\bC^\times \cdot y}$ and on
  $S$).\footnote{Note that this isomorphism differs slightly from
the one in (i), as required since the $\bC^\times$-action on
$S$ has changed; however, they become the same after including
either factor or after projecting to the first factor.}
 In this case, $S$ is Poisson isomorphic to every slice appearing
  in an ordinary Darboux-Weinstein decomposition, $\hat X_y \cong \hat
  Y_y \times S$.
\end{enumerate}
\end{theorem}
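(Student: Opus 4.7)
The plan is to reduce everything to the classical Darboux--Weinstein decomposition together with the freeness of the $\bC^\times$-action on the orbit $\bC^\times \cdot y$.

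For parts (i) and (ii), I would apply classical Darboux--Weinstein at $y$ to obtain $\hat X_y \cong \hat Y_y \hat\times S$ for some formal slice $S$. Since $y$ has trivial stabilizer, I can decompose $\hat Y_y \cong \Delta^1 \hat\times \Delta^{\dim Y - 1}$ with $\Delta^1$ pointing along the orbit, so $\hat X_y \cong \Delta^1 \hat\times T$ where $T := \Delta^{\dim Y - 1} \hat\times S$ is a formal slice to the orbit in $X$. Freeness of the action then makes the multiplication map $\bC^\times \hat\times T \to \hat X_{\bC^\times \cdot y}$, $(t,x) \mapsto t\cdot x$, a $\bC^\times$-equivariant isomorphism of formal schemes (with $\bC^\times$ acting trivially on $T$ and by translation on the first factor); the analogous construction inside $Y$ yields $\hat Y_{\bC^\times \cdot y} \cong \bC^\times \hat\times \Delta^{\dim Y - 1}$. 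Combining, $\hat X_{\bC^\times \cdot y} \cong \bC^\times \hat\times \Delta^{\dim Y - 1} \hat\times S \cong \hat Y_{\bC^\times \cdot y} \hat\times S$, as claimed. The canonical Poisson structure on $S$ is inherited from the classical construction, and the standard symplectic form on $\hat Y_{\bC^\times \cdot y}$ is provided by the cited Theorem \ref{t:e-db}, which I expect is proven by choosing a $\bC^\times$-eigencoordinate along the orbit together with a transverse Darboux chart, and running an equivariant Moser argument to normalize the form.

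For (iii), I would invoke the symplectic singularities hypothesis as a rigidity statement: on the dense smooth symplectic locus the Poisson bivector is the inverse of the symplectic form, and on a symplectic singularity the extension from the smooth locus is unique, so the Poisson bivector on $\hat X_{\bC^\times \cdot y}$ is determined by its restriction to the smooth locus of the product $\hat Y_{\bC^\times \cdot y} \hat\times S$ and hence by the Poisson structure on $S$. Part (iv) then follows directly: when $S$ admits a $\bC^\times$-action rescaling its bracket by $-k$, the sum Poisson bivector on $\hat Y_{\bC^\times \cdot y} \hat\times S$ is naturally $\bC^\times$-equivariant of degree $-k$, and rigidity from (iii) forces it to agree with the original bivector on $\hat X_{\bC^\times \cdot y}$. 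The final comparison with ordinary Darboux--Weinstein, $\hat X_y \cong \hat Y_y \hat\times S$, is obtained by taking the formal completion of the decomposition in (i) at $y$ and invoking uniqueness of the classical slice up to formal Poisson isomorphism.

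The main obstacle I anticipate is the rigidity argument for (iii): one must extract from symplectic singularities enough control on Poisson cohomology to ensure uniqueness of the bivector on the formal neighborhood of an entire orbit rather than a point, with the non-properness of the orbit and the need to work $\bC^\times$-equivariantly throughout introducing extension and convergence subtleties that the classical point-based argument does not encounter. A secondary difficulty is verifying the equivariant Darboux normal form underlying (ii), since the Moser argument must respect the $\bC^\times$-weights consistently in the formal setting.
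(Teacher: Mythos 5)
There is a genuine gap, and it is located exactly where the real content of the theorem lies: parts (iii) and (iv). Your argument for (iii) — that on a variety with symplectic singularities the bivector is the inverse of the symplectic form on the dense smooth locus and its extension is unique, hence determined by the structure on $S$ — only says that a bivector is determined by its restriction to a dense open set, which is true but irrelevant. The issue is that the scheme-theoretic product decomposition of (i) does \emph{not} make the Poisson bivector a product: relative to such a decomposition the bivector has the normal form \eqref{e:hx-pbv}, containing a cross term $t^{-k}\partial_u \wedge \xi$ for a vector field $\xi$ on $S$ with $[\xi,\pi_S]=k\pi_S$, and the content of (iii) is that all admissible $\xi$ give $\bC^\times$-Poisson isomorphic structures. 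This fails in general even when the structure is generically symplectic: Example \ref{ex:counterex2} is nondegenerate on the smooth locus, so your rigidity argument would apply verbatim to it, yet there the bivector is provably \emph{not} equivalent to the product, because the relevant $\xi$ (an Euler field) is not Hamiltonian. What symplectic singularities actually buy, and what your proposal never engages with, is the implication proved in Theorem \ref{t:main-sr} — via Kaledin's semismallness and $H^1(\rho^{-1}(y))=0$ for a symplectic resolution, or Namikawa--Xu finiteness of the algebraic fundamental group in general — that all Poisson vector fields on $S$ are Hamiltonian; only then can the discrepancy $\xi-\xi'$ be absorbed by a change of the coordinate $u$ (Theorem \ref{t:conv}, Corollary \ref{c:ham-pbv-isom}). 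The same omission undermines (iv): the paper's proof (Corollary \ref{c:ct-act}) must first use Theorem \ref{t:conv} to replace $\xi$ by minus the Euler field of the new action on $S$, and only then does the weight-zero subalgebra yield a genuine Poisson product; "rigidity forces agreement" is not an argument here.

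For (i)--(ii) your route differs from the paper's: you transport an ordinary Darboux--Weinstein slice at $y$ along the orbit via the action map, whereas the paper works directly with the grading (Lemma \ref{l:prod}, Corollary \ref{c:fn}) and a flat connection along $u$ (proof of Theorem \ref{t:main-sl}) to produce $\cO(S)$ as the degree-zero centralizer of $t,z_1,\ldots,z_{\dim Y-2}$, with bracket rescaled by $t^{-k}$. Your approach can plausibly be made to give the formal-scheme decomposition of (i), but as stated it is loose (one must justify that the action map on completions is an isomorphism, not merely formally étale along the section), and, more importantly, it does not produce the Poisson projection and the normal form \eqref{e:hx-pbv} that (iii) and (iv) require; saying the Poisson structure on $S$ is "inherited from the classical construction" quietly assumes the equivariant slice agrees Poisson-theoretically with the classical one, which is precisely the last assertion of (iv) and needs the hypotheses you are trying to dispense with.
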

%Note that, under the hypotheses of (iii) (hence of (iv)) that $k$ is
%automatically positive, but this is not assumed in parts (i) and (ii).
%
The theorem is proved in \S \ref{ss:t-pre-main-pf}, using results from
\S$\!$\S \ref{s:sympl}--\ref{s:main}.  We also give a quantization of
the above result (Theorem \ref{t:quant}), where we show that every
$\bC^\times$-compatible quantization of $X$ admits a direct-product
decomposition parallel to the above.

We actually prove stronger (although more technical)
versions of the above theorem.  In particular, 
%it is not essential
%that the stabilizer of $y$ be trivial, as explained in Theorem
%\ref{t:main-sl-nf}.(ii), and 
the assumption of (iii) can be relaxed to requiring that every Poisson
vector field on $S$ ia Hamiltonian (the infinitesimal analogue of
the condition that every symplectic torus action is Hamiltonian), as
explained in Corollary \ref{c:ham-pbv-isom}. In Theorem
\ref{t:main-sr}, we explain why this latter condition is implied when
$X$ is a symplectic singularity, and hence when it admits a symplectic
resolution.  The assumption that the stabilizer of $y$ be trivial is
not strictly necessary, but is an important simplification; see
Corollary \ref{c:main-sl-nf} for a couple statements without this
assumption.

As we explain in \S \ref{s:main}, the hypotheses of (iii) and (iv)
arise naturally.  In particular, Examples \ref{ex:counterex1} and
\ref{ex:counterex2} are simple typical cases where assumption (iii)
does not hold and the conclusions of (iii) and (iv) fail.  Indeed, in
the more technical Theorem \ref{t:main-sl}.(iii) below, we will
describe the Poisson structure on $\hat X_{\bC^\times \cdot y}$
guaranteed in part (iii) above, which is a sum not merely of terms
coming from both $\hat Y_{\bC^\times \cdot y}$ and $S$, but a third
term, which in general represents an obstruction to writing $\hat
X_{\bC^\times \cdot y}$ as a product as in part (iv) above.
Nonetheless, assumptions (iii) and (iv) are not necessary for the
conclusions to hold: for example, the conclusion of (iv) holds in the
case $X=\mathfrak{sl}(2)^*$, as explained in the next subsection, even
though this case does not satisfy the assumptions of (iii) (hence
neither of (iv)).

On the other hand, we explain a conjecture that implies that the assumption
(iv) is unnecessary when $X$ is conical.  Namely, if $X$ is conical
and admits a symplectic resolution (or more generally is a symplectic
singularity),
% The hypotheses of the theorem imply in particular that the slice $S$
% appearing in an ordinary Darboux-Weinstein decomposition admits a
% $\bC^\times$ action. 
is a conjecture of Kaledin \cite[Conjecture
1.8]{Kal-gtsr} that, for 
% , for any conical $X$ admitting a
% $\bC^\times$-equivariant symplectic resolution (or more generally a
% conical symplectic singularity) and
every $x \in X$, the ordinary
Darboux-Weinstein slice $S$ at $x$ is conical (this holds in cases of
interest such as hypertoric and quiver varieties, Kostant-Slodowy
slices of the nilpotent cone, and linear quotient singularities).  The
theorem above motivates the following natural strengthening:
\begin{question}\label{q:slice}
  Suppose $X$ is conical and admits a $\bC^\times$-equivariant
  symplectic resolution with homogeneous symplectic form.
% of degree $k$.  
 Then for every point
  $x \in X$ with trivial stabilizer under $\bC^\times$:
\begin{enumerate}
\item Does the $S$ appearing in \eqref{e:ehat} admit a contracting
  $\bC^\times$-action?
\item Can the action be taken to make the Poisson structure on $S$ homogeneous?
\item Does this generalize to the situation where $X$ need not admit a
  symplectic resolution, but is a symplectic singularity (whose
  smooth locus has a homogeneous symplectic form)?
\end{enumerate}
\end{question}
Note that it is automatic in the conical case that
the degree of the generic symplectic form is positive (i.e., the
degree of the Poisson bracket is negative).
A positive answer to the question would imply that, for $X$ conical,
the assumption in Theorem \ref{t:pre-main}.(iv) holds automatically
(assuming still (iii)).   We will pose a quantum
version of the above in Question \ref{q:q-slice} below.
\begin{remark}\label{r:nam}
  In fact, as suggested by Namikawa, one can generalize the question
  to require only that $X$ be normal and conical and have a sympletic
  form on the smooth locus of positive degree, and not to require that
  $X$ be a symplectic singularity.  Moreover, as he pointed out, in
  this case, if the slices to all symplectic leaves are conical and
  have symplectic forms on their smooth locus of positive degree, then
  one can deduce \`a fortiori that $X$ is a symplectic singularity
  (by the proof of Lemma 2.4 of \cite{Nam-ess}).
\end{remark}

In this paper, we show that these questions have affirmative answers
for linear quotient singularities (which rarely admit symplectic
resolutions, as explained in \cite{Belscms,BSsra2} and references therein) and that (1) and
(2) have affirmative answers for hypertoric varieties, and we
explicitly compute the decompositions.  For the case where $X$ is the
nilpotent cone of a semisimple Lie algebra (or its Kostant-Slodowy
slices), see Remark \ref{r:walg} for a discussion.
\begin{remark}\label{r:weight-subtle}
  It might be tempting to ask a stronger question than (2) above: Can
  $S$ be taken to have a $\bC^\times$-action so that its Poisson
  structure has the same degree as that of $X$? However, the answer to
  this is negative in general.  For example (cf.~Remark \ref{r:walg}
  below), suppose $\mfg$ is a semisimple Lie algebra and equip
  $\mfg^*$ with its standard Poisson bracket of degree $-1$, defined
  by $\cO(\mfg^*)=\Sym \mfg$. Let $X \subseteq \mfg^*$ be the cone of
  elements $\langle x,-\rangle$ where $x \in \mfg$ is ad-nilpotent and
  $\langle-,-\rangle$ is the Killing form (i.e., the cone of elements
  whose coadjoint orbit is stable under dilation).  Then $X$ is a
  closed Poisson subvariety, so has a Poisson bracket of degree
  $-1$. Let $Y \subseteq X$ be any coadjoint orbit (which is
  automatically conical).  Then, a transverse slice to the orbit is
  given by the Kostant-Slodowy slice in $\mfg$ intersected with the
  nilpotent cone.  The latter has a well-known action, called the
  Kazhdan action, making it a Poisson cone with bracket, of degree
  $-2$.  The Kazhdan action admits a square root, giving the Poisson
  bracket degree $-1$, if and only if $e$ is even (i.e., for some
  $\mathfrak{sl}(2)$-triple $(e,h,f)$, then $\ad h$ has only even
  eigenvalues).  Note that $X$ does admit a $\bC^\times$-equivariant
  symplectic resolution, the well-known Springer resolution.

  When $e$ is not even, in general no $\bC^\times$ action exists
  giving the Poisson bracket degree $-1$: for example, in the case
  when $e$ is a subregular nilpotent and $\mfg=\mathfrak{sl}(n)$, then
  the slice is $\bC[x,y,z]/(xy+z^n)$, with Poisson bracket
  $\{x,y\}=nz^{n-1}, \{z,x\}= x, \{y,z\}=y$, and when $n$ is odd,
  there is no grading giving the bracket degree $-1$.
\end{remark}
We apply these techniques in particular to the cases of quotient
singularities (Section \ref{s:flq}) and hypertoric varieties (Section
\ref{s:hypertoric}).  In these cases, we give explicit
equivariant Darboux-Weinstein decompositions, which are Zariski local in the
hypertoric case (and sometimes in the quotient case, but more
generally \'etale local). These decompositions quantize to give tensor
product decompositions of the noncommutative deformations, which
unlike in previous literature now incorporate the $\bC^\times$-action:
this means one obtains isomorphisms of \emph{filtered} algebras, or
alternatively graded $\bC[\![\hbar]\!]$-algebras.

We mention one of the motivations for Theorem \ref{t:pre-main}: a weak
version, using only infinitesimal $\bC^\times$-actions (i.e., Euler
vector fields) appeared in the recent paper \cite{PS-pdrhhvnc}. In the
infinitesimal form, the result is immediate from the usual
Darboux-Weinstein theorem.
% : it says merely that, for an ordinary Darboux-Weinstein
% decomposition $\hat X_y \cong \hat Y_y \times S$, one can write
% the Euler vector field $\Eu_X$ of $X$ at $y$ as a sum, $\Eu_X|_{\hat
% Y_y} = \Eu_Y|_{\hat Y_y} + \eta$ of the Euler vector field on $Y$
% and a vector field $\eta$ on $\hat Y_y$ parallel to $S$.
In \cite{PS-pdrhhvnc}, this was employed to study the structure of a
canonical $\caD$-module on a Poisson variety \cite{ESdm,ESsym,ES-dmlv}
which represents invariants under Hamiltonian flow.  Namely, the latter
$\caD$-module admits a Jordan-H\"older decomposition whose composition
factors are local systems on the leaves, and in certain cases, using
Theorem \ref{t:pre-main} (or its infinitesimal version),
one can show that it is a direct sum of intermediate extensions of
explicit weakly equivariant local systems on these leaves
(\cite[Theorem 5.1]{PS-pdrhhvnc}).  In Corollary \ref{c:main-dmod} and
Remark \ref{r:main-dmod} below, we describe the local equivariant
structure of this $\caD$-module without requiring the hypotheses of
\cite[Theorem 5.1]{PS-pdrhhvnc} (which are only needed to express the
global structure as the aforementioned direct sum).

% Incorporating full $\bC^\times$-actions, as above, is significantly
% more subtle than merely infinitesimal actions, which in general do
% not integrate to $\bC^\times$-actions, and only require localization
% at a point $x$ instead of a punctured line $\bC^\times \cdot x$. In
% more detail, the subtlety has to do with the fact that $S \subseteq
% \hat X_y$ is only canonical up to Hamiltonian isotopy, which is
% incompatible with the $\bC^\times$-action. Although this was not
% needed for \cite[Theorem 5.1]{PS-pdrhhvnc}, in cases where the
% hypotheses there are not satisfied but our theorem applies (e.g.,
% when $X$ does not admit a symplectic resolution but only has
% symplectic singularities), the result here gives a description of
% the weakly equivariant structure of the $\caD$-module.

We begin the paper, in \S \ref{s:sympl}, with easier, but still to our
knowledge new, fundamental results on the Darboux theorem for
$\bC^\times$-equivariant symplectic structures on smooth varieties
with a nontrivial $\bC^\times$-action.  We show that, if we formally
localize along a punctured line $\bC^\times \cdot x$, the resulting
formal $\bC^\times$-Poisson scheme is completely classified by the
degree of the symplectic form and the dimension of the vector space,
and give the explicit formula for the structure. Then, using this
section, we proceed to our main results in \S \ref{s:main} and to
examples and applications in \S\!\S \ref{s:flq}--\ref{s:hypertoric}.

It should also be possible to give explicit formulas for the
corresponding decomposition for Slodowy slices, quiver varieties, and
more generally for Hamiltonian reductions of symplectic vector spaces
or varieties (as well as to answer Question \ref{q:slice} above in these
cases).
% which we will discuss elsewhere. 
The quantizations of these
would yield decompositions for $U\mfg$ and more generally for
$W$-algebras, as well as for quantized quiver varieties and quantum
Hamiltonian reductions.

\subsection{The case of $\mathfrak{sl}(2)$ and semisimple
 Lie algebras}\label{ss:ss-Lie}
Let us illustrate our decomposition result in a simple
 example: $X = \mathfrak{g}^*$ for
$\mathfrak{g}=\mathfrak{sl}_2(\bC)$.  
This is equipped with a standard
Poisson structure, which is given by the Lie bracket: the bracket on
$\cO(X)=\Sym \mathfrak{g}$ is the unique extension of the Lie bracket
on $\mathfrak{g}$ satisfying the Leibniz rule,
$\{fg,h\}=g\{f,h\}+f\{g,h\}$. The symplectic leaves are the coadjoint
orbits under $G = \SL_2(\bC)$ (which are equipped with the symplectic
forms $\omega_{G\cdot \phi}(a(x)|_\phi, a(y)|_\phi) = \phi([x,y])$,
for $\phi \in \mfg^*$, $x,y \in \mfg$, and $a(x)$ and $a(y)$ the
vector fields of the infinitesimal adjoint action of $\mfg$ on
$\mfg^*$).  

Although this does not satisfy the hypotheses of Theorem
\ref{t:pre-main}.(iii), we can still give its decomposition (e.g.,
Theorem \ref{t:main-sl} below still applies).

We are interested in decompositions along $\bC^\times$-stable
symplectic leaves other than the vertex, and the only such leaf is the
unique nontrivial nilpotent coadjoint orbit, $G \cdot \chi$, for $\chi = \langle
e, -\rangle$ (with $\langle -,-\rangle$ the Killing form). To obtain a
$\bC^\times$-equivariant decomposition, we can formally localize along
the punctured line $\bC^\times \cdot \chi$.  Let $c = 2ef +
\frac{1}{2}h^2$ be the Poisson central element, which has degree
two. Then we have
\begin{multline}
\widehat{\cO(X)}_{\bC^\times \cdot \chi} = \bC[f,f^{-1}][\![e,h]\!]
\cong \bC[f,f^{-1}][\![h]\!] \hat \otimes \bC[\![c]\!]
\\ \cong \cO(\widehat{\bA^2}_{\bC^\times \cdot \chi} \times (\widehat{\bA^1})_0)
\cong \cO(\widehat{G \cdot \chi}_{\bC^\times \cdot \chi} \times (\widehat{\bA^1})_0),
\end{multline}
which is a $\bC^\times$-equivariant Poisson decomposition. 
\begin{remark} 
  Note the subtlety that, to get a $\bC^\times$-Poisson direct product
  decomposition, the slice $\bC[\![c]\!]$ had to be generated in
  degree two.
  % In this case, since the Poisson structure is zero on this slice,
  % one does not need to worry about the degree of its Poisson bracket
  % (the condition appearing in Theorem \ref{t:pre-main}.(iv),
  % although condition (iii) is not satisfied in this case).  In the
  % examples in this paper, we will generally consider the case where
  % $X$ has
  % finitely many leaves (or even admits a symplectic resolution);
  % this would mean working with the nilpotent cone instead of $X$
  % (i.e., modding by the central element $c$), and then we get the
  % trivial decomposition $\cO(G \cdot \chi) \cong \cO(G \cdot \chi)
  % \otimes \bC$.
\end{remark}
In fact, in this case we have
a much stronger statement: the decomposition above holds Zariski locally:
\begin{equation}\label{e:sl2}
  \cO(X\setminus\{f=0\}) = \bC[f,f^{-1},h] \otimes \bC[c] \cong
  \cO((\bC^\times \times \bA^1) \times \bA^1),
\end{equation}
which is already a $\bC^\times$-Poisson isomorphism. 

To obtain a filtered quantization, we first give a filtered
quantization of $\cO(X)[f^{-1}]$.  This is given by the Ore
localization $U\mfg[f^{-1}]$, once we verify that $S=\{f^m \mid m \geq
1\}$ is a right denominator set (in the terminology of \cite[\S
10]{Lam-lmr}, which also explains why this is sufficient to obtain an
Ore localization).  First, we need to demonstrate the right Ore
condition: for all $a \in U\mfg$ and all $f^i \in S$, there exists $b
\in U\mfg$ and $f^j \in S$ satisfying $af^j = f^ib$.  It suffices to
prove this when $a$ ranges over algebra generators of $U\mfg$, so for
$a \in \mfg$; in this case, setting $j=i+1$, we have $a f^{i+1} = f^i
(fa + [a,f])$, as desired.  Next, we need to verify the right
reversibility condition: if $a \in U\mfg$ satisfies $f^ia=0$ for some
$f^i \in S$, then there exists $f^j \in S$ with $af^j = 0$. This is
satisfied trivially since $U\mfg$ is a domain (it has no
zerodivisors).

Now, let $C := ef+fe+\frac{1}{2}h^2 \in U\mfg$ be the Casimir element. Then we obtain the decomposition:
\begin{equation}\label{e:sl2-q}
  U \mfg[f^{-1}] = \bC\langle f,f^{-1},h \rangle/([h,f]+2f) \otimes \bC[C]
  = \bC \langle x,x^{-1},y \rangle/([x,y]-x) \otimes \bC[C],
\end{equation}
with $x=f$ and $y = \frac{1}{2} h$, with $x$ and $y$ in filtered
degree one and $C$ in filtered degree two.  For a graded deformation
quantization, letting $U_\hbar \mfg := T\mfg[\![\hbar]\!] /
(xy-yx-\hbar[x,y])_{x,y \in \mfg}$, we get
\begin{equation}\label{e:sl2-qh}
U_\hbar \mfg[f^{-1}] = \bC \langle x,y \rangle[\![\hbar]\!]/([x,y]-\hbar x) 
\hat \otimes \bC[C],
\end{equation}
which is graded with $|\hbar|=|x|=|y|=1$ and $|C|=2$.  We can also
invert $\hbar$ and get a decomposition over the Laurent field
$\bC(\!(\hbar)\!)$.  

If we are interested in quantizations of the nilpotent cone, $\Spec
\cO(X)/(c)$, we can divide \eqref{e:sl2-q} by the ideal
$(C-\lambda)$ (or \eqref{e:sl2-qh} by $(C-\lambda \hbar^2)$) for
$\lambda \in \bC$, and we recover the fact that inverting $f$ in every
quantization $U\mfg/(C-\lambda)$ yields the algebra of differential
operators on $\bC^\times$.
\begin{remark}\label{r:walg}
  We believe that the above can be generalized to arbitrary
  semisimple $\mathfrak{g}$ in the following way.
  We are interested in the Poisson variety $\mfg^*$, under dilation
  action, which gives the Poisson bracket degree $-1$. The symplectic
  leaves are the coadjoint orbits. We consider such an orbit closed
  under the $\bC^\times$-action, say $G \cdot \chi$ for $\chi \in
  \mfg^*$.  Then there is a standard construction of a transverse
  slice to this orbit: For $\langle -, - \rangle$ the Killing form,
  let $e \in \mfg$ be such that $\langle e,x\rangle=\chi(x)$ for all
  $x \in \mfg$; then $e$ is ad-nilpotent. The Jacobson-Morozov theorem
  guarantees the existence of $h,f\in \mfg$ such that $(e,h,f)$
  generate a subalgebra of $\mathfrak{sl}_2$. To this is associated a
  transverse slice $S := \chi + \ker(\ad^* f)$, with $(\ad^* x)(\phi)
  := \phi \circ \ad (-x)$.  The tangent space to the orbit $G \cdot
  \chi$ can be described as $V^*$, for $V:=[f,\mfg] \subseteq \mfg$,
  equipped with the symplectic form $\omega_V(x,y)=\chi([x,y])$.

  There is a canonical $\bC^\times$ action on $\mfg^*$ which preserves
  $S$ and restricts there to a contracting action to $\chi$, called
  the Kazhdan action, given by $\lambda \cdot \phi = \lambda^{-2}
  \lambda^{\ad^* h}(\phi)$ for $\lambda \in \bC^\times$ and $\phi \in
  S$. However, this gives the Poisson bivector degree $-2$, unlike the
  standard dilation action on $\mfg^*$ above.  To fix this, we assume
  that $e$ is even, which means that $\ad(h)$ acts only with even
  eigenvalues.  Then, the Kazhdan action admits a square root,
  $\lambda \mapsto \lambda^{-1} \lambda^{\frac{1}{2} \ad^* h}$.  Using
  this action, the condition of Theorem \ref{t:pre-main}.(iv) on the
  degree of the Poisson bivector is satisfied.\footnote{Since $S$ is
    an affine space, even if $e$ is not even, we can still pick some
    $\bC^\times$-action for which the Poisson structure on $S$ has
    degree $-1$, but this is not natural. Moreover, as explained in
    Remark \ref{r:weight-subtle}, the intersection $S \cap
    \text{Nil}(\mfg^*)$ with the nilpotent cone does \emph{not} in
    general have a $\bC^\times$ action giving the bracket degree
    $-1$.}
  % (one can probably relax this condition by passing to a two-fold
  % cover of $\mfg^*$, adjoining $\sqrt{f}$ to $\cO(\mfg^*)$).

  Then, we have a $\bC^\times$-equivariant isomorphism (a
  $\bC^\times$-equivariant, quasiclassical analogue of \cite[Theorem
  1.2.1]{Losqsa}):
\begin{equation}
  \widehat{\mfg^*}_{\bC^\times \cdot \chi} \cong \widehat{V^*}_{\bC^\times \cdot \chi} \times
  \hat S.
\end{equation}
Following \cite{Losqsa}, this yields a decomposition of a localization
of a certain completion of the enveloping algebra $U_\hbar \mfg$.  Let
$\mfg(i)\subseteq \mfg$ denote the $i$-weight space of $\ad h$. Equip
$\mfg(-1)$ with the symplectic form $(x,y) = \chi([x,y])$, and let
$\mathfrak{l} \subseteq \mfg(-1)$ be a Lagrangian.  Set $\mathfrak{m}
:= \bigoplus_{i \leq -2} \mfg(i) \oplus \mathfrak{l}$, and let
$\mathfrak{n} := \mathfrak{m} \cap \ker(\chi)$. 
% Assume $\mfg$ is even, so that $\ad h$ acts only with even
% eigenvalues (this assumption can perhaps be relaxed at the cost of
% passing to a $2$-fold cover, i.e., adjoining $\sqrt{f}$).
Then \cite[Theorem
1.2.1]{Losqsa} should strengthen to the following graded filtered
isomorphism:
\begin{equation}\label{e:ug-decomp}
  \widehat{U \mfg}_{\mathfrak{n}}[f^{-1}] \cong \caD(\bC^\times \times \Delta^{\dim Y/2 - 1}) \hat \otimes \mathcal{W}_\chi,
\end{equation} 
which is compatible with the $\bC^\times$-action by $\lambda \mapsto
\lambda^{\ad h/2}$ on the left-hand side, and the action on the first
factor on the right-hand side by dilating in the $\bC^\times$
direction. 
Here, $\mathcal{W}_{\chi}$ is the $W$-algebra quantizing $S$, which
was denoted $U(\mfg,e)$ in \cite{Losqsa}, and is defined as $(U\mfg /
U\mfg \cdot \mathfrak{m}')^{\mathfrak{m}}$ for $\mathfrak{m}' =
\{x-\chi(x) \mid x \in \mathfrak{m}\} \subseteq U\mfg$. We equip it
with the filtration obtained by reducing by half the degrees of the
Kazhdan filtration (compatible with our grading on $S$ above).  From
this, \cite[Theorem 1.2.1]{Losqsa} follows (in the case of even $e$)
by completing along $\mathfrak{m}'$. This also recovers the example
above \eqref{e:sl2},\eqref{e:sl2-q}, in the case
$\mathfrak{g}=\mathfrak{sl}(2)$.

When we replace $X$ with the nilpotent cone $X = \text{Nil}(\mfg^*)$,
we obtain a decomposition $\hat X_{\bC^\times \cdot \chi} \cong
\widehat{V^*}_{\bC^\times \cdot \chi} \times (\widehat{S \cap
  X})$. In this case, the hypotheses of Theorem \ref{t:pre-main} are
satisfied; the content here is the explicit identification of the
slice with the Kostant-Slodowy slice $\widehat{S \cap X}$. The
quantization then is the quotient of \eqref{e:ug-decomp} by the
augmentation ideal of the center of $U\mfg$. As a corollary, we can
deduce that Question \ref{q:slice} has a positive answer for the
nilpotent cone.
\end{remark}

\subsection{Conventions}\label{ss:conv}
We will work with varieties or formal schemes over $\bC$ (so
$\bA^n=\bC^n$).  When we take a product of formal schemes, we always
mean the formal scheme obtained by completing the corresponding tensor
product of rings of functions, i.e., $\Spf A \times \Spf B = \Spf (A
\hat \otimes B)$, where if $A$ is given the $I$-adic topology and $B$
the $J$-adic topology (for $I \subseteq A$ and $J \subseteq B$ ideals)
then $A \hat \otimes B$ is the completion of $A \otimes B$ in the
$(I+J)$-adic topology.  When we say a $\bC^\times$-variety (or formal
scheme), we mean a variety (or formal scheme) equipped with a
$\bC^\times$-action. For an affine variety, this just means that the
algebra of functions is $\bZ$-graded.  When we say a
$\bC^\times$-Poisson variety (or formal scheme), we mean one equipped
with a $\bC^\times$-action and a Poisson structure homogeneous for
this action.
% A conical variety (or formal scheme) is one equipped
% with a $\bC^\times$-action contracting to a point (so in the variety
% case this means it is affine with a nonnegatively graded algebra of
% functions, with $\bC$ in degree zero).  
A $\bC^\times$-Poisson (iso)morphism is a $\bC^\times$-equivariant
Poisson (iso)morphism.  When $f$ is a homogeneous element of a
$\bZ$-graded algebra, then $|f|$ will denote its degree.

When we say ``symplectic leaf,'' $Y$ of a Poisson variety $X$, we will
always mean a algebraic symplectic leaf, i.e., $Y$ is a locally closed
algebraic subvariety such that, for every $y \in Y$, the tangent space
$T_y Y$ is the span of the restriction of all Hamiltonian vector
fields (originally defined in any neighborhood of $y$) to $Y$.
Moreover, we always assume symplectic leaves are connected and that
they are maximal (i.e., they are not proper open subsets of a larger
locally closed connected subvariety with the property in the previous
sentence). (Note that, since we are usually only concerned with local
properties, these last two conditions of maximality and connectedness
will be irrelevant for most of our results.)  Here is an important
example: if $X$ is a union of finitely
many algebraic symplectic leaves (which holds, for instance, when $X$ admits a
symplectic resolution or is a symplectic singularity) then the
symplectic leaves are the connected components of the loci $X_i
\subseteq X$ of points $x \in X$ where the restriction of Hamiltonian
vector fields span a subspace of dimension $i$ in $T_x X$.
% We remark
% that, in this case, $X$ can always be written as a union of at most
% $\dim X + 1$ leaves, by taking for each $m \leq \dim X$ the leaf whose
% points $x$ are those where the span of the restriction of Hamiltonian
% vector fields to $x$ has dimension $m$.

\subsection{Acknowledgements}
I would like to thank Ivan Losev and Hiraku Nakajima for useful
discussions, and Yoshinori Namikawa for his feedback on Question
\ref{q:slice}. I am grateful to the anonymous referees for important
suggestions and corrections. The impetus for this work was
\cite{PS-pdrhhvnc}, and I thank Nick Proudfoot for his collaboration
on that project.  This work was partially supported by NSF grant
DMS-1406553.

\section{$\bC^\times$-equivariant formal symplectic
  geometry}\label{s:sympl}
Given an affine $\bC^\times$-variety $X$ and $x \in X$, let $\bar x$
denote the image of $x$ in the categorical quotient $X /\!/ \bC^\times =
\Spec \cO(X)^{\bC^\times}$.
\begin{lemma}\label{l:prod}Let $X$ be an irreducible affine 
variety with a faithful
$\bC^\times$-action, and $x \in X$ a point with trivial stabilizer. 
Then there is a $\bC^\times$-stable affine open subvariety
$U$ containing $x$ together with 
an isomorphism $U \cong \bC^\times \times U/\!/\bC^\times$, 
such that $x \mapsto (1,\bar x)$.
\end{lemma}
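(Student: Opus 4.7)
The plan is to produce, on a suitable $\bC^\times$-stable principal affine open $U$ containing $x$, a homogeneous unit of degree exactly $1$; writing $\cO(U) = \bigoplus_n \cO(U)_n$, any such element $g$ yields an identification $\cO(U) = \cO(U)^{\bC^\times}[g, g^{-1}]$ as graded rings, which is precisely the datum of an isomorphism $U \cong \bC^\times \times U/\!/\bC^\times$. I would rescale $g$ by a constant so that $g(x) = 1$, which sends $x$ to the desired basepoint $(1, \bar x)$.

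First I would translate the hypothesis $\Stab(x) = \{1\}$ into the algebraic statement that the set $D := \{d \in \bZ : \cO(X)_d \text{ contains some } f \text{ with } f(x) \neq 0\}$ generates $\bZ$ as a group. If instead $\langle D \rangle = d_0 \bZ$ with $d_0 > 1$, then for any primitive $d_0$-th root of unity $\lambda$ and any homogeneous $f \in \cO(X)_d$ one has $f(\lambda \cdot x) = \lambda^d f(x) = f(x)$ (either $d \in D$ and $\lambda^d = 1$, or $f(x) = 0$); hence $\lambda \in \Stab(x)$, a contradiction.

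Given this, I choose homogeneous $f_1, \ldots, f_k \in \cO(X)$ with $f_i(x) \neq 0$ and $\gcd(|f_1|, \ldots, |f_k|) = 1$, and take $U := X_{f_1 \cdots f_k}$, a $\bC^\times$-stable principal affine open containing $x$. By Bezout, I pick $a_1, \ldots, a_k \in \bZ$ with $\sum_i a_i |f_i| = 1$, and set $g := c \cdot \prod_i f_i^{a_i} \in \cO(U)$ with $c := \prod_i f_i(x)^{-a_i}$; then $g$ is a homogeneous unit of degree $1$ satisfying $g(x) = 1$. To finish, any homogeneous $h \in \cO(U)_n$ factors as $h = g^n \cdot (h g^{-n})$ with $h g^{-n} \in \cO(U)^{\bC^\times}$, so the inclusion $\cO(U)^{\bC^\times}[g, g^{-1}] \hookrightarrow \cO(U)$ is surjective; and $g$ is transcendental over $\cO(U)^{\bC^\times}$ because any nontrivial relation would equate nonzero elements living in distinct graded pieces.

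The main obstacle is the first step, extracting the arithmetic condition $\gcd(D) = 1$ from the geometric triviality of $\Stab(x)$; once a degree-$1$ homogeneous unit on an affine open is available, the graded decomposition of its coordinate ring and the resulting product structure are formal.
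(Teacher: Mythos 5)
Your proof is correct, and its endgame is the same as the paper's: once you have a homogeneous unit $g$ of weight one on a $\bC^\times$-stable affine open $U \ni x$, writing every homogeneous $h \in \cO(U)$ of weight $n$ as $g^n\cdot(hg^{-n})$ identifies $\cO(U)$ with $\cO(U)^{\bC^\times}\otimes\bC[g,g^{-1}]$, i.e.\ gives the equivariant isomorphism $U \cong \bC^\times \times U/\!/\bC^\times$ with $x\mapsto(1,\bar x)$ after normalizing $g(x)=1$. Where you differ is in producing $g$: the paper simply takes a weight-one homogeneous $t\in\cO(X)$ whose restriction to the line $\bC^\times\cdot x$ is a coordinate function and inverts it, while you first localize at a product of homogeneous $f_i$ with $f_i(x)\neq 0$ and then form $g=c\prod_i f_i^{a_i}$ via Bezout, deducing $\gcd(|f_1|,\ldots,|f_k|)=1$ from triviality of the stabilizer (your root-of-unity argument is fine; just note the same argument with an arbitrary $\lambda\neq 1$ also excludes the degenerate case where all relevant degrees are zero). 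This extra care is not wasted: a weight-one homogeneous function on all of $X$ need not exist even under the hypotheses of the lemma --- for instance $X=\bC^2$ with $\lambda\cdot(a,b)=(\lambda^2a,\lambda^3b)$ and $x=(1,1)$ has trivial stabilizer, yet every homogeneous element of $\cO(X)=\bC[a,b]$ has weight in the monoid generated by $2$ and $3$ (up to sign), so no weight-one $t$ exists globally --- hence the paper's opening choice of $t$ implicitly requires passing to a localization and taking a Laurent monomial, which is exactly what your Bezout step supplies. In short: same decomposition, but your construction of the degree-one unit is the more careful (and strictly more general) version of the paper's first sentence.
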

\begin{remark}
  By Sumihiro's theorem, we can drop the assumption that $X$ is affine
  if we assume that it is normal, since then every orbit is contained
  in a $\bC^\times$-stable open affine subvariety.  
  % Note: for a counterexample without the normality assumption,
  % we could take the product of a projective nodal
  % curve with C*, acting on C* by the standard action and on the
  % projective nodal curve by the standard action, for which every
  % C*-stable neighborhood of the singular line C* is the whole
  % non-affine curve.
\end{remark}
\begin{proof}[Proof of Lemma \ref{l:prod}]
  Let $t \in \cO(X)$ be any homogeneous function (of weight one) which
  restricts on the line $\bC^\times \cdot x$ to a homogeneous
  coordinate function (of weight one by our assumption).  Let $U$ be
  the complement of the locus where $t=0$. Then $U$ is stable under
  the $\bC^\times$-action, so $\cO(U)$ is spanned by homogeneous
  functions.  Every homogeneous function is of the form $f t^k$ where
  $f$ has weight zero, i.e., $f \in \cO(U)^{\bC^\times}$, and $k$ is an
  integer.  Thus the inclusion of algebras $\cO(U)^{\bC^\times} \otimes
  \bC[t,t^{-1}] \to \cO(U)$ is an isomorphism, i.e., $U \cong
  \bC^\times \times U/\!/\bC^\times$ as $\bC^\times$-varieties (giving
  $U/\!/\bC^\times$ the trivial $\bC^\times$-action).
\end{proof}
From now on, we will use the following notation for a locally closed
affine subvariety $Y$ of a (not necessarily affine) 
variety $X$.  Let $U \subseteq X$
be an open affine subset such that $Y$ is closed in $U$ (i.e.,
obtained by inverting an element whose vanishing locus on $\bar Y$ is
$\bar Y \setminus Y$). Then the completion $\hat \cO(X)_Y$ is defined
as the completion $\hat \cO(U)_Y$, which clearly does not depend on
the choice of open affine subset $U$. We set $\hat X_Y := \Spf \hat
\cO(X)_Y$. We deduce the following corollary:
\begin{corollary}\label{c:fn}
  Let $X$ be a (not necessarily affine) irreducible variety with a
  faithful $\bC^\times$-action, and $x \in X$ a point having trivial
  stabilizer.  Then the formal neighborhood $\hat X_{\bC^\times \cdot
    x}$ of $\bC^\times \cdot x$ is $\bC^\times$-equivariantly
  isomorphic to the product $\bC^\times \times Z$,
  where $Z=\Spf \cO(Z)$ is a formal affine scheme with the trivial
  $\bC^\times$-action.
\end{corollary}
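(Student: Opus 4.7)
The plan is to reduce to Lemma \ref{l:prod} by shrinking $X$ to an affine $\bC^\times$-stable open neighborhood of $x$, and then to read off the equivariant decomposition directly from the resulting product structure upon completing.

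First I would find a $\bC^\times$-stable open affine subvariety $U \subseteq X$ containing the orbit $\bC^\times \cdot x$: in the affine case one takes $U = X$; in the normal case this follows from Sumihiro's theorem (cf.\ the remark after Lemma \ref{l:prod}); and in general the existence of such a $U$ is exactly what is needed to make sense of $\hat X_{\bC^\times \cdot x}$ under the convention introduced just above. Since the stabilizer of $x$ is trivial, the induced $\bC^\times$-action on $U$ is faithful, so Lemma \ref{l:prod} applies to $U$ and yields a possibly smaller $\bC^\times$-stable affine open $U' \subseteq U$ with $x \in U'$, together with a $\bC^\times$-equivariant isomorphism $U' \cong \bC^\times \times (U'/\!/\bC^\times)$ sending $x$ to $(1, \bar x)$, where $\bC^\times$ acts trivially on the categorical quotient.

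Under this identification, the orbit $\bC^\times \cdot x$ becomes the closed $\bC^\times$-stable subvariety $\bC^\times \times \{\bar x\} \subseteq U'$, whose ideal inside $\cO(\bC^\times) \otimes \cO(U'/\!/\bC^\times)$ is $\cO(\bC^\times) \otimes \mathfrak{m}_{\bar x}$ and hence lies entirely in the second tensor factor. Completing at this ideal therefore gives
\begin{equation*}
\hat X_{\bC^\times \cdot x} \cong \widehat{U'}_{\bC^\times \cdot x} \cong \bC^\times \hat\times \widehat{U'/\!/\bC^\times}_{\bar x},
\end{equation*}
and this isomorphism is visibly $\bC^\times$-equivariant because the $\bC^\times$-action only moves the first factor. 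Setting $Z := \widehat{U'/\!/\bC^\times}_{\bar x} = \Spf \hat\cO(U'/\!/\bC^\times)_{\bar x}$, a formal affine scheme carrying the trivial $\bC^\times$-action, yields the claim.

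The only mildly delicate step is the first one, securing a $\bC^\times$-stable affine open neighborhood of the \emph{entire} orbit in the non-affine case; once such a $U$ is in hand, the rest is a direct translation of Lemma \ref{l:prod} across formal completion.
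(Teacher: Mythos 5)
Your reduction works in the affine case and, via Sumihiro, in the normal case, but it has a genuine gap exactly where the corollary goes beyond Lemma \ref{l:prod}: the general (possibly non-normal, non-affine) irreducible $X$. Your justification for the existence of a $\bC^\times$-stable affine open $U$ containing the orbit in that case --- that ``this is exactly what is needed to make sense of $\hat X_{\bC^\times \cdot x}$ under the convention'' --- is not correct. The convention defining $\hat\cO(X)_Y$ only asks for \emph{some} open affine $U$ in which the locally closed affine subvariety $Y$ is closed; it does not ask that $U$ be $\bC^\times$-stable, and indeed no such stable affine neighborhood is guaranteed without normality (Sumihiro's theorem is where normality is used, as the remark after Lemma \ref{l:prod} indicates). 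So in the general case your first step is unsupported, and the rest of the argument has nothing to run on.

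The paper's proof sidesteps this entirely: it never produces a stable affine open. Instead it observes that $\bC^\times\cdot x$ is itself affine, so $\hat X_{\bC^\times\cdot x}=\Spf \hat\cO(X)_{\bC^\times\cdot x}$ is an affine formal scheme, and that $\hat\cO(X)_{\bC^\times\cdot x}$ carries a $\bC^\times$-action (the orbit is stable, so the group acts on the formal neighborhood) and is topologically spanned by homogeneous elements in the $\mathfrak{p}$-adic topology. One then reruns the argument of Lemma \ref{l:prod} directly on this completed algebra --- choose a weight-one element $t$ restricting to the coordinate on the line (invertible since it is invertible modulo the pronilpotent ideal), and write every homogeneous element as $ft^k$ with $f$ of weight zero --- to get $\bC[t,t^{-1}]\hat\otimes\hat\cO(X)_{\bC^\times\cdot x}^{\bC^\times}\iso\hat\cO(X)_{\bC^\times\cdot x}$. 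If you want to salvage your route, you would either have to restrict the statement to affine or normal $X$, or prove separately that a stable affine open neighborhood of the orbit exists in your setting; the paper's argument is the way to avoid that issue altogether. (Your computation in the second and third paragraphs, completing the product $U'\cong\bC^\times\times U'/\!/\bC^\times$ along $\bC^\times\times\{\bar x\}$, is fine once a stable affine $U'$ is available.)
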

Note here that $\bC^\times \times Z$ is, by definition, $\Spf
\bC[t,t^{-1}] \hat \otimes \cO(Z)$ (cf.~\S \ref{ss:conv}). In this
case, $\cO(Z)$ is equipped with the $\mathfrak{p}$-adic topology for
$\mathfrak{p} \subseteq \hat \cO(Z)$ a maximal ideal, so the completed
tensor product is with respect to the $(\mathfrak{p})$-adic topology.
\begin{proof}[Proof of Corollary \ref{c:fn}]
  The only thing that has to be explained is how to remove the
  affineness assumption.  The point is that $\bC^\times \cdot x$ is
  still affine, so the completion $\hat X_{\bC^\times \cdot x} = \Spf
  \hat \cO(X)_{\bC^\times \cdot x}$ is still an affine formal scheme.
% (indeed, if we take a (not necessarily
%  $\bC^\times$-stable) open neighborhood $U$ of $\bC^\times \cdot x$,
%  with $\mathfrak{p}$ the ideal of $\bC^\times \cdot x$, then $\cO(U)
%  / \mathfrak{p}^m$ is independent of the choice of $U$).  
  Now, as $\hat \cO(X)_{\bC^\times \cdot x}$ is the completion
  of a ring with a locally finite $\bC^\times$-action, it
  it is topologically (in the $\mathfrak{p}$-adic topology, with
  $\mathfrak{p}$ the ideal of $\bC^\times \cdot x$ as before) spanned
  by homogeneous elements.  The same proof as before applies to show
  that the inclusion of algebras $\bC[t,t^{-1}] \hat \otimes \hat
  \cO(X)_{\bC^\times \cdot x}^{\bC^\times} \to \hat \cO(X)_{\bC^\times
    \cdot x}$ is an isomorphism.
\end{proof}

Now, the Darboux theorem for formal neighborhoods goes through in this
context. Let $\Delta$ denote the formal polydisc, i.e., the formal
neighborhood of the origin in $\bA^1$. For all $m \geq 1$, let
$\Delta^m$ denote the formal $m$-polydisc, i.e., $\Spf
\bC[\![z_1,\ldots,z_m]\!]$.
%  (this is a slight abuse of notation, since
% this is the completed product of $\Delta$ with itself $m$ times)
% under the presence of a $\bC^\times$-action with homogeneous
% symplectic form.
Let us call the standard symplectic structure of degree $k$ on
$\bC^{\times} \times \Delta^{2n-1}$, coordinatized as $\Spf
\bC[t,t^{-1}][\![u,z_1,\ldots,z_{2n-2}]\!]$, with $u,z_i$ in degree
zero, the following:
\begin{equation}\label{e:ss-deg-k}
t^{k-1}dt \wedge du + \sum_{i=1}^{n-1} d(t^kz_{2i-1}) \wedge d z_{2i}.
\end{equation}
% Note that, even though $t^{k/2}$ is not in $\bC[t,t^{-1}]$ when $k$
% is odd, we could have put in $t^{k/2}z_i$ for all $i$ above,
% producing a well-defined one-form on $\bC^{\times} \times
% \Delta^{2n+1}$ since the half-integers cancel.
Let us also record the Poisson bivector in the above situation.  We
make the substitution $z'_{2i-1} := t^k z_{2i-1}$ for all $1 \leq i
\leq 2n-1$.
\begin{equation}\label{e:ss-deg-k-pb}
\pi_{2n,-k} := t^{-k+1}\partial_u \wedge \partial_t + \sum_{i=1}^{n-1} \partial_{z_{2i-1}'} \wedge \partial_{z_{2i}}.
\end{equation}.
\begin{theorem}\label{t:e-db}
  Any symplectic structure on $\bC^\times \times \Delta^{2n-1}$ of
  degree $k$ can be taken to the standard one by a
  $\bC^\times$-equivariant change of coordinates.
\end{theorem}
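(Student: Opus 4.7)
The approach is an equivariant version of Moser's homotopy method. I would first reduce to the case where $\omega$ agrees with the standard form $\omega_0$ along the zero section $\bC^\times \times \{0\}$, then produce a $\bC^\times$-equivariant primitive for the difference, and finally run Moser's argument, with the $\bC^\times$-equivariance of the resulting flow essentially automatic from weight matching.

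For the reduction on the zero section, weight considerations show that $\omega|_{\bC^\times}$ has the form $t^{k-1}\sum_i a_i\, dt \wedge dw_i + t^k \sum_{i<j} b_{ij}\, dw_i \wedge dw_j$ for constants $a_i, b_{ij}$, writing $(w_1, \ldots, w_{2n-1}) = (u, z_1, \ldots, z_{2n-2})$. Encoding this as a vector $a \in \bC^{2n-1}$ and a skew matrix $B \in \mathfrak{so}_{2n-1}(\bC)$, one checks that non-degeneracy of $\omega$ is equivalent to non-degeneracy of the induced form $B|_{a^\perp}$, which in turn forces $\ker B$ to be one-dimensional and transverse to $a^\perp$. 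Choosing $v_1 \in \ker B$ with $a \cdot v_1 = 1$ and completing to a Darboux basis $v_2,\ldots,v_{2n-1}$ of $(a^\perp, B|_{a^\perp})$ gives a constant $\mathrm{GL}_{2n-1}$ change of the weight-zero coordinates $(w_i)$ --- automatically $\bC^\times$-equivariant --- after which $\omega|_{\bC^\times} = \omega_0|_{\bC^\times}$. This is the main obstacle: one must work within the grading, since the weight-$(-1)$ direction $\partial_t$ cannot be mixed with the weight-zero $\partial_{w_i}$'s, and the existence of the required weight-zero $v_1$ has to be extracted from the non-degeneracy of $\omega$.

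Setting $\beta := \omega - \omega_0$, which is closed, of weight $k$, and vanishing on the zero section, I need a $\bC^\times$-equivariant primitive $\alpha$ of $\beta$ that also vanishes there. For $k \neq 0$, Cartan's formula applied to the weight-zero Euler vector field $E = t\partial_t$ gives $k\beta = \mathcal{L}_E\beta = d\iota_E\beta$ (using $d\beta = 0$), so $\alpha := \tfrac{1}{k}\iota_E\beta$ works, with the required vanishing inherited from $\beta$. For $k = 0$, the formal neighborhood retracts onto $\bC^\times$ and hence has $H^2 = 0$, so a primitive exists; I would extract its weight-zero part and subtract the pullback along the retraction of its restriction to the zero section in order to kill that restriction.

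Finally, set $\omega_s := \omega_0 + s\beta$ for $s \in [0,1]$: non-degeneracy persists because the Pfaffian is a unit in the formal algebra iff its restriction to $\bC^\times$ is a unit. The vector field $V_s$ defined by $\iota_{V_s}\omega_s = -\alpha$ has weight $0$ (both sides have weight $k$) and all components in the ideal of the zero section (since $\alpha$'s components do), so it integrates to a $\bC^\times$-equivariant family of formal automorphisms $\phi_s$; the usual computation $\tfrac{d}{ds}\phi_s^*\omega_s = \phi_s^*(\mathcal{L}_{V_s}\omega_s + \beta) = \phi_s^*(-d\alpha + \beta) = 0$ then gives $\phi_1^*\omega = \omega_0$, completing the proof.
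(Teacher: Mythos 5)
Your proposal is correct in substance but follows a genuinely different route from the paper's. The paper adapts the inductive, coordinate-by-coordinate argument of the formal Darboux theorem: after matching the form at the point $(1,0,\ldots,0)$, homogeneity forces the error term to vanish along the entire punctured line; an inductive change of coordinates that is the identity modulo $(z_1,\ldots,z_{2n-2})$ reduces $\omega$ to $f\,t^{k-1}dt\wedge du+\sum_i d(t^k z_{2i-1})\wedge dz_{2i}$; and closedness together with degree zero force $f\in 1+u\,\bC[\![u]\!]$, so the antiderivative substitution $u\mapsto F$ finishes. That argument is uniform in $k$ and is reused almost verbatim for the quantization (Theorem \ref{t:q-unique}, Remark \ref{r:darboux}). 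You instead run an equivariant Moser homotopy, producing the primitive via Cartan's formula for the weight Euler field $E=t\partial_t$ when $k\neq 0$. Your normalization along the line is exactly the weight-constrained linear algebra that the paper leaves implicit (and your analysis of $a$, $B$, $\ker B$ is correct), and your observations that $V_s$ has weight zero and coefficients in the ideal $J=(u,z_1,\ldots,z_{2n-2})$ are what make both the equivariance and the formal integrability of the time-dependent flow legitimate: one solves the flow equation order by order in $J$, each weight component being finite-dimensional. What your route buys is a cleaner, more conceptual argument for $k\neq 0$; what it costs is a separate and more delicate case $k=0$, plus the formal-flow bookkeeping that the paper's purely algebraic induction avoids.

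One point in your $k=0$ case needs tightening. Subtracting the pullback along the retraction of $\iota^*\alpha$ (with $\iota$ the inclusion of the punctured line) only kills the induced form on the line, which by weight reasons is $c\,dt/t$; but Moser requires the restriction of $\alpha$ at points of $\bC^\times\times\{0\}$ to vanish as a one-form on the \emph{ambient} formal scheme, since otherwise $V_s$ has constant $\partial_{w_i}$-components along the line and its formal flow is not defined (a translation in the $\Delta^{2n-1}$-directions is not an automorphism of the formal neighborhood). The fix is immediate: by weight zero the leftover restriction is a constant-coefficient form $\sum_i c_i\,dw_i$, which is closed, so subtract it as well; after that your argument goes through.
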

\begin{remark}
  As pointed out by a referee, at least in the case $k \neq 0$,
  there is a simple interpretation and proof of this statement using
  contact geometry: a symplectic structure $\omega$ on $\bC^\times
  \times \Delta^{2n-1}$ of degree $k \neq 0$ is of the form $\omega =
  d p^*(\theta)$ where $p: \bC^\times \times \Delta^{2n-1} \to
  \Delta^{2n-1}$ is the projection and $\theta \in
  \Omega^1(\Delta^{2n-1}) \otimes O(k)$ is a contact structure on
  $\Delta^{2n-1}$ equipped with the trivial bundle $O(k)$ viewed as
  $\bC^\times$-equivariant with the weight $k$ action of
  $\bC^\times$. Then $\theta$ can always be put in the standard form
  $\theta = (du + \sum_i z_{2i-1} dz_{2i}) \otimes t^k$.  (In other
  words, we can write $\omega = d(t^k p^* \alpha)$ where $\alpha \in
  \Omega^1(\Delta^{2n-1})$ is a usual contact one-form on
  $\Delta^{2n-1}$.)  Note that, when $k=0$, then $d(p^* \theta) = p^*
  d\theta$ which is degenerate for any one-form $\theta \in
  \Omega^1(\Delta^{2n-1})$, so that this proof does not seem to work.
\end{remark} 
\begin{proof}
  As before, write $\bC^\times \times \Delta^{2n-1} = \Spf
  \bC[t,t^{-1}][\![u,z_1,\ldots,z_{2n-2}]\!]$, and choose coordinates
  $u,z_1,\ldots,z_{2n-2}$ so that the restriction of the symplectic
  structure $\omega$ to $t=1, u=z_1,\ldots,z_{2n-2}=0$ is the standard
  one,
\[
\omega|_{(1,0,\ldots,0)} = dt \wedge du + \sum_{i=1}^{n-1} dz_{2i-1}
\wedge dz_{2i}.
\]
Then, since it is homogeneous, the symplectic structure must have the
form
\[
\omega = t^{k-1} dt \wedge du + \sum_{i=1}^{n-1} d(t^kz_{2i-1}) \wedge
d z_{2i} + \omega',
\]
where $\omega'$ is a closed two-form vanishing at the ideal
$(t-1,u,z_1, \ldots, z_{2n-2})$. Since $\omega'$ is homogeneous, it
must vanish at the entire locus $t\neq 0, u=z_1=\cdots=z_{2n-2}=0$,
and hence at the ideal $(u,z_1, \ldots, z_{2n-2})$.

Now we are in a position to apply Moser's trick, as in the proof of
the usual Darboux theorem. First, note that $\omega_c := \omega - c
\omega'$ is closed and nondegenerate for all $c \in \bC$. We need to
show that there exists a $\bC^\times$-equivariant symplectomorphism
$\Phi$ such that $\Phi^* \omega_1 = \omega_0$.  To do so, first note
that all closed two-forms on $\bC^\times \times \Delta^{2n-1}$ are
exact, so we can write $\omega' = d \alpha$ for some one-form
$\alpha$, also of weight $k$ with respect to the $\bC^\times$
action. Then, we can consider for all $0 \leq c \leq 1$ the vector
field $\theta_c$ such that $i_{\theta_c}(\omega_c) = -\alpha$.  Since
$\alpha$ and $\omega_c$ have weight $k$, $\theta_c$ will have weight
zero.  It moreover vanishes on $(u,z_1,\ldots,z_{2n-2})$.  As a
consequence, it integrates in $\bC^\times \times \Delta$ to an
automorphism $\Phi_s$ such that $\Phi_0 =\Id$ and $\frac{d}{ds}
\Phi_s|_{s=c} = \theta_c$.  Then the calculation in Moser's trick
shows that $\Phi_c^* \omega_c = \omega_0$ for all $c$, and in
particular that $\Phi_1^* \omega_1 = \omega_0$ as desired. Namely,
\[
\frac{d}{ds} (\Phi_s^* \omega_s) = \Phi_s^* L_{\theta_s} \omega_s +
\Phi_s^* \omega' = \Phi_s^* (d i_{\theta_s} + i_{\theta_s} d)\omega_s
+ \Phi_s^* \omega' = -\Phi_s^* d\alpha + \Phi_s^* \omega' = 0,
\]
so $\Phi_c^* \omega_c$ is constant, and $\Phi_0=\Id$ implies
$\Phi_0^*\omega_0=\omega_0$.
% consider the Poisson bracket $\{-,-\}'$ of the form
% \[
% \{t,u\}' = t^{k-1}, \quad \{z_{2i-1},z_{2i}\}' = t^k, 
% \]
% with all other brackets zero.  The bracket $\{-,-\}_\omega$ associated
% to $\omega$ has this form 
% up to elements in the ideal $(u,z_1,\ldots,z_{2n-2})$. We need to show that,
% by coordinate changes which are the identity modulo $(u,z_1,\ldots,z_{2n-2})$,
% we can make $\{-,-\}_\omega$ equal $\{-,-\}'$.  It suffices inductively to
% show that, if 
% $x\in \{t,u,z_1,\ldots,z_{2n-2}\}$ and $y \in \{z_1,\ldots,z_{2n-2}\}$
% are distinct coordinate functions
% and
% the error
% $\epsilon_{x,y} := \{x,y\}_\omega - \{x,y\}'$ is in $(u,z_1,\ldots,z_{2n-2})^m$,
% then we can make a coordinate change which is the identity modulo
% $(u,z_1,\ldots,z_{2n-2})^{m+1}$ after which $\epsilon_{x,y} \in (u,z_1,\ldots,z_{2n-2})^{m+1}$ and all other error terms do not change modulo $(u,z_1,\ldots,z_{2n-2})^{m+1}$. If $y=z_{2j}$, then the change is $x \mapsto x-t^{-k}z_{2j-1} \epsilon_{x,y}$
% (and similarly if $y=z_{2j-1}$ then we make the change $x \mapsto x+t^{-k} z_{2j} \epsilon_{x,y}$).
\end{proof}
% \begin{remark}
% following Remark \ref{r:darboux}. Namely, there the
%   procedure is shown how to apply an automorphism after which
%   $\{u,z_i\}=0$ for all $i$.  This automorphism will by construction
%   be $\bC^\times$-equivariant, and the same argument can be applied
%   with $u$ replaced by $t, z_1, z_2$, and so forth, so that we will
%   get $\{t,z_i'\}= 0$ for all $i$, $\{z_{2i-1}',z_{2i}'\}=1$ for all
%   $i$, and other brackets among the $z_i'$ zero. Then $\{t,u\}$ will
%   have zero bracket with all the $z_{i}'$ and have weight $k-1$, hence
%   $\{t,u\} = t^{k-1} f$ 
% \end{remark}
Putting the two results together, we obtain the following. 
\begin{corollary}\label{c:ss-nf}
  Let $X$ be a smooth symplectic variety with a faithful $\bC^\times$-action
  and a homogeneous symplectic structure of degree $k$.  Then for any
  $x \in X$ which has trivial stabilizer, a formal neighborhood of
  $\bC^\times \cdot x$ is isomorphic to $\bC^{\times} \times
  \Delta^{\dim X - 1}$ with the standard symplectic structure
  \eqref{e:ss-deg-k}.
\end{corollary}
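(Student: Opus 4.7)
The proof will be a direct assembly of the two preceding results. Here is the plan.

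First, I apply Corollary \ref{c:fn} to the smooth irreducible variety $X$ (note that faithfulness of the action and triviality of the stabilizer of $x$ are precisely the hypotheses needed). This produces a $\bC^\times$-equivariant isomorphism
\[
\hat X_{\bC^\times \cdot x} \cong \bC^\times \hat \times Z
\]
for some formal affine scheme $Z$ with trivial $\bC^\times$-action. Since $X$ is smooth, the formal neighborhood $\hat X_{\bC^\times \cdot x}$ is a smooth formal scheme, and hence so is the factor $Z$; therefore $Z \cong \Delta^{\dim X - 1}$ as formal schemes. Writing $\dim X = 2n$ (forced by the existence of a symplectic form), we obtain a $\bC^\times$-equivariant isomorphism of formal schemes $\hat X_{\bC^\times \cdot x} \cong \bC^\times \hat \times \Delta^{2n-1}$.

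Next, transporting the symplectic form $\omega$ on $X$ along this isomorphism gives a symplectic structure on $\bC^\times \hat \times \Delta^{2n-1}$ that is still homogeneous of degree $k$, since the isomorphism is $\bC^\times$-equivariant. At this point I simply invoke Theorem \ref{t:e-db}: there is a $\bC^\times$-equivariant change of coordinates on $\bC^\times \hat \times \Delta^{2n-1}$ taking the transported form to the standard symplectic structure of degree $k$ given in \eqref{e:ss-deg-k}. Composing this change of coordinates with the previous isomorphism yields the desired $\bC^\times$-Poisson isomorphism.

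There is no real obstacle: the two ingredients, namely the equivariant product decomposition along the orbit (Corollary \ref{c:fn}) and the equivariant Darboux normal form (Theorem \ref{t:e-db}), have already done all the work. The only mildly nontrivial point is the justification that the transverse factor $Z$ is the smooth formal polydisc, which follows from smoothness of $X$ together with the fact that smoothness of a product of formal schemes (over $\bC$) forces smoothness of each factor. Once this is observed, the corollary is an immediate concatenation of the two results.
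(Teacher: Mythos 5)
Your proposal is correct and is exactly the paper's argument: the corollary is stated there with no separate proof beyond "putting the two results together," i.e., Corollary \ref{c:fn} gives the equivariant splitting $\hat X_{\bC^\times \cdot x} \cong \bC^\times \hat\times Z$, smoothness identifies $Z$ with $\Delta^{\dim X - 1}$, and Theorem \ref{t:e-db} normalizes the transported homogeneous symplectic form. Your added remark justifying that the transverse factor is the formal polydisc (regularity of the completed product forces regularity of each factor) is a fine way to fill in the one detail the paper leaves implicit.
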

We can also deduce the classification of symplectic structures in the
case that the $\bC^\times$ action is not faithful:
\begin{corollary}\label{c:ss-nf-nf}
  Equip $\bC^\times \times \Delta^{2n-1}$ with a $\bC^\times$ action
  which along the line $\bC^\times \times \{0\}$ has the form $\lambda
  \cdot (t,0) = (\lambda^\ell t,0)$.  Then any weight $k$ symplectic
  form can be written in some choice of homogeneous coordinates
  $(t,u,z_1,\ldots,z_{2n-2})$ (with $t$ invertible of weight $\ell$
  and restricting to the standard coordinate on $\bC^\times \times
  \{0\}$) in the standard form,
\begin{equation}\label{e:ss-deg-k-nf}
dt \wedge du + \sum_{i=1}^{n-1} dz_{2i-1} \wedge d z_{2i}.
\end{equation}
The same holds for the completion of a smooth symplectic variety with
a $\bC^\times$ action along a line $\bC^\times \cdot x$ where the stabilizer
of $x$ is the group of $\ell$-th roots of unity.
\end{corollary}
Note that there is no need to multiply the first term by a power of $t$ since now we can take $u$ to have nonzero degree (in fact, degree $k-\ell$).
\begin{proof}
  The proof is similar to that of Theorem \ref{t:e-db} and Lemma
  \ref{l:prod}.  First, we can pick $t$ to be a weight $\ell$ element
  mapping modulo the ideal, call it $\tilde J$, of $\bC^\times$ to the
  coordinate $t$.  By linear algebra, a homogenous symplectic form on
  a graded vector space can be written in standard form in terms of a
  homogeneous basis, and we can take this to include any nonzero fixed
  homogeneous element.  Thus, at $1 \in \bC^\times$, we can find
  homogeneous cotangent vectors $\bar u,\bar z_1,\ldots,\bar z_{2n-2}$
  so that $\omega = \bar t \wedge \bar u + \cdots + \bar z_{2n-1}
  \wedge \bar z_{2n-2}$. Lifting these to homogeneous generators of
  the ideal $(u,z_1,\ldots,z_{2n-2})$, we obtain homogeneous
  coordinates $t,u,z_1,\ldots,z_{2n-2}$ so that, at $1 \in
  \bC^\times$, $\omega$ restricts to $dt \wedge du + dz_1 \wedge dz_2
  + \cdots + dz_{2n-3} \wedge dz_{2n-2}$.  The rest of the argument is
  the application of Moser's trick as in the proof of Theorem
  \ref{t:e-db}.
\end{proof}
\begin{remark}\label{r:efp}
  We are interested here in the case where $x$ is not a fixed point
  since we are going to study symplectic leaves of
  $\bC^\times$-Poisson varieties (with homogeneous Poisson bracket of
  some degree) which are stable under the $\bC^{\times}$-action, but
  on which this action is nontrivial (the main example being Poisson
  cones with finitely many symplectic leaves, where all leaves other
  than the vertex have this form).  

  In the case of fixed points, the situation is different.  In the
  case that $x$ is a fixed point such that, in some neighborhood $U$ of $X$,
 the limit $t \cdot y$ exists as $t \to 0$ for all $y \in U$,
% (or equivalently
%  as $t \to \infty$), 
  which is called an \emph{elliptic fixed point}
  (as studied recently in, e.g., \cite{BDMN-ccdqsav}), the
  Bialynicki-Birula decomposition theorem yields an analogue of
  Corollary \ref{c:fn}, i.e., that in a formal neighborhood of $x$, we
  have an equivariant isomorphism $\hat X_x \cong \Delta^{2n} =
  \Spf[\![x_1, \ldots, x_{2n}]\!]$ where each $x_i$ is homogeneous of
  some nonnegative degree. Then, the formal Darboux theorem applies
  with the same proof, where we only require equivariant changes of
  coordinates, so that there is an equivariant change of coordinates
  under which the symplectic structure is
\[
\sum_{i=1}^n dx_{2i-1} \wedge dx_{2i},
\]
of degree $|x_{2i-1}|+|x_{2i}|$ (which therefore cannot depend on
$i$).
\end{remark}
\subsection{Proof without Moser's trick}\label{ss:without-moser}
It is instructive to give a proof of Theorem \ref{t:e-db} using only
Poisson brackets and not symplectic forms, and hence without using
Moser's trick, since these will be unavailable in the singular Poisson
case we consider in the sequel.  Thus, the reader may wish to skip
this subsection for now, and return to it before reading the proof of
Theorem \ref{t:main-sl} given in \S \ref{ss:t-main-sl-pf} below, which
uses the same ideas.  In two places, we give here a simpler, more
explicit argument than the one we need in \S \ref{ss:t-main-sl-pf},
thereby avoiding the use of \cite[Lemma 3.2]{Kalss}, although we also
outline below how the latter argument would go.

Define the ideals $J := (z_1,\ldots,z_{2n-2})$ and $\tilde
J:=(J,u)$. Define the notation $z_{2i-1}' = t^k z_{2i-1}$ and
$z_{2i}'=z_{2i}$. Then, we need to perform a coordinate change which
is the identity modulo $\tilde J$ so that
\[
\{t,u\}=t^{1-k}, \quad \{z_{2i-1}', z_{2i}'\} = 1,
\]
and all other brackets of the $t,u,z_1',\ldots,z_{2n}'$ zero.

The first step is to change the $z_i'$ so that $\{t,z_i'\}=0$ for all
$i$. As a result of this, the degree-zero part of the centralizer of
$t$ will be $\bC[\![z_1,\ldots,z_{2n-2}]\!]$.  We will do this
explicitly in the next paragraph.  We would like to point out, though,
that this computation can be replaced with the following more conceptual
argument: for the existence of such a
coordinate change, we only need to know that the degree-zero
centralizer of $t$ had this form for some different choice of
$z_i$. To prove the latter we can alternatively follow \cite[Lemma
3.2]{Kalss} as in the proof of Theorem \ref{t:main-sl}.(ii) in \S
\ref{ss:t-main-sl-pf} below; the idea being that $\nabla_u :=
\{t,u\}^{-1} \{t,-\}$ defines a flat connection on $\cO(\bC^\times
\times \Delta^{2n-1})^{\bC^\times}$ as an $\bC[\![u]\!]$-module,
together with its filtration by powers of the ideal of $(\bC^\times
\times \{0\})/\bC^\times$. Hence $\cO(\bC^\times \times \Delta^{2n-1})
\cong \bC[\![u]\!] \hat \otimes (\cO(\bC^\times \times
\Delta^{2n-1})^{\bC^\times})^\nabla$, and the second factor consists
of the degree-zero elements commuting with $t$.  It then follows that
the latter factor is a power-series algebra generated by elements
which map to the same basis for its cotangent space as the original
elements $z_1,\ldots,z_{2n-2}$. We omit further details, since we give
an explicit construction of the coordinate change needed in the next
paragraph.

To explicitly find the needed modification of the $z_i'$, we proceed
inductively.  We know that $\{t,z_i'\} \in J$ for all $i$.  Suppose
that $\{t,z_i'\} \in J^m$ for some $m \geq 1$. Then we can make the
coordinate change $z_i' \mapsto z_i'' := z_i' - t^{k-1}\int \{t,z_i'\}
du$, where $\int f du$ denotes the unique antiderivative of $f$ by $u$
which is a multiple of $u$.  Since $\{t,z_j'\} \in J$ for all $j$,
$\{t,u\}-t^{1-k} \in J$, and $\{t,z_i'\} \in J^m$, it follows from the
definition that $\{t, z_i''\} \in J^{m+1}$, as desired.  Moreover,
$z_i''-z_i' \in J^{m+1}$, so we can iterate the aforementioned
coordinate change $z_i' \mapsto z_i''$ and the result will converge to
a function $\overline{z_i'}$ such that $\{t,\overline{z_i'}\} =
0$. Doing this for all $i$, we obtain a change of coordinates $z_i'
\mapsto \overline{z_i'}$ which is the identity modulo $J$, so that
$\{t,\overline{z_i'}\} = 0$ and the original assumptions remain
satisfied. From now on we assume that this is done, so that
$\{t,z_i'\}=0$ for all $i$.

Now, the entire subalgebra $\bC[t,t^{-1}][\![z_1,\ldots,z_{2n-2}]\!]$
of elements commuting with $t$ must be closed under the Poisson
bracket. Since the bracket has degree $-k$, it follows that the
degree-zero subalgebra $\bC[\![z_1,\ldots,z_{2n-2}]\!]$ is closed
under the operation $\{-,-\}' := t^k\{-,-\}$. Moreover, this operation
defines on it a Poisson bracket which is symplectic.  By the ordinary
formal Darboux theorem, we can make a change of variables of
$\bC[\![z_1,\ldots,z_{2n-2}]\!]$ so that $\{-,-\}'$ is in standard
form.  As a result, $\{z_i',z_j'\}$ are all what we desire.

  Therefore, all of the brackets except for those with $u$ are as
  desired. To finish, we will show that there is a change of variables
  $u \mapsto u'$ so that $\{u',z_i'\}=0$ for all $i$, so that $u'
  \equiv u \pmod{J}$ and $u'$ has weight zero. We could also apply
  \cite[Lemma 3.2]{Kalss} for this, to show that $\cO(\bC^\times
  \times \Delta^{2n-1})^{\bC^\times}$ decomposes as a tensor product of
  $\bC[\![z_1,\ldots,z_{2n-2}]\!]$ and its centralizer, and pick $u'$
  as a generator of the latter.  This is what we must do in \S
  \ref{ss:t-main-sl-pf} below.  However, we give in the next paragraph a
  shortcut argument using that every Poisson vector field on a
  symplectic disk is Hamiltonian.

  Let $\xi_u$ be the Hamiltonian vector field $\{u,-\}$. Since
  $\{u,t\}=t^{1-k}$, the Jacobi identity implies that $t^k\xi_u$
  preserves $\bC[\![z_1,\ldots,z_{2n-2}]\!]$. Moreover, the Jacobi
  identity again proves that it is a Poisson vector field on $\Spf
  \bC[\![z_1,\ldots,z_{2n-2}]\!]$ where the latter is equipped with
  the Poisson structure $\{-,-\}'$ discussed above.  Since every Poisson
  vector field on a symplectic disk is Hamiltonian, $t^k\xi_u =
  t^k\{f,-\}$ for some $f \in \bC[\![z_1,\ldots,z_{2n-2}]\!]$ which we
  can assume to be in the augmentation ideal.  Then we can make the
  substitution $u \mapsto u-f$ which will have the desired property.

\subsection{Quantization}\label{ss:quant1}
The above theorem has the following consequence.  Suppose that $X=
\Spec(\cO(X))$ is an affine Poisson variety (such as an affine
symplectic variety), or that $X
= \Spf \cO(X)$ is the formal neighborhood of some subvariety of an
affine Poisson variety.

When $X = \Spec(\cO(X))$,
recall that a quantization $A_\hbar$ of $X$ (or equivalently of
$\cO(X)$) is a $\bC[\![\hbar]\!]$-algebra equipped with an algebra
epimorphism $\pr: A_\hbar \to \cO(X)$ with kernel $\hbar A_\hbar$, such
that $A_\hbar$ is isomorphic, as
a $\bC[\![\hbar]\!]$-module,  to $\cO(X)[\![\hbar]\!]  =
\{\sum_{m \geq 0} a_m \hbar^m \mid a_m \in \cO(X)\}$, and satisfies
the following additional axiom.  For any $f \in \cO(X)$, let $\tilde f
\in A_\hbar$ be an arbitrary lift.  Then $A_\hbar$ is a quantization
if it additionally satisfies:
\begin{gather}
  % \tilde a \star \tilde b \equiv \widetilde{ab} \pmod \hbar, \forall a,b \in \cO(X), \\
  \tilde a \star \tilde b - \tilde b \star \tilde a \equiv \hbar \widetilde
  {\{a,b\}} \pmod \hbar^2, \forall a,b \in \cO(X);
\end{gather}
it is easy to see that the axiom holds for one choice of lift if and only
if it holds for all choices.  We can make the same definition when 
$X = \Spf \cO(X)$ is the formal spectrum of a complete topological Poisson
algebra, where now $A_\hbar$ is also a complete topological algebra.

Next, suppose that $X$ has a $\bC^\times$-action such that the Poisson
structure has degree $-k$. Then we can ask that the quantization
$A_\hbar$ be compatible with the action.  This means that $A_\hbar$
admits an infinitesimal action of $\bC^\times$, i.e., an Euler
derivation $\Eu: A_\hbar \to A_\hbar$, which satisfies
$\Eu(\hbar)=k\hbar$, and such that the induced action on
$A_\hbar/\hbar A_\hbar \cong \cO(X)$ agrees with the original Euler
derivation ($\Eu(x)=|x| \cdot x$ when $x$ is homogeneous).  This implies
that $A_\hbar$ is the $\hbar$-adic completion of a graded algebra
having $|\hbar|=k$. This graded algebra can be recovered as the
$\bC^\times$-finite part, $A_\hbar^f \subseteq A_\hbar$, defined as
the collection of elements $a \in A_\hbar$ such that $\{\Eu^k(a)\}$
spans a finite vector space.  Slightly abusively, we will refer to
such $A_\hbar$ themselves as graded algebras and call an isomorphism
$A_\hbar \to B_\hbar$ graded if it is compatible with the
infinitesimal $\bC^\times$-action, i.e., the Euler derivations.

Let $\Weyl_{\hbar,k}(\bC^{2n-2})$ be the graded $\bC[\hbar]$-algebra which is
generated by elements $z_1', \ldots, z_{2n-2}'$, with relations
$[z_{2i-1}',z_{2i}']=\hbar$ for $1 \leq i \leq n-1$ and all other
$[z_i',z_j']$ zero, and equipped with the grading with $|\hbar|=k$,
$|z_{2i-1}'|=k$ and $|z_{2i}|=0$ for all $1 \leq i \leq n-1$.  (Note
that, if we set $z_{2i-1} := t^{-k}z_{2i-1}'$ and $z_{2i} := z_{2i}'$
for $1 \leq i \leq n-1$, we recover generators of degree zero, and
this is consistent with the notation of \S \ref{ss:without-moser}.)
\begin{theorem}\label{t:q-unique}
  The unique $\bC^\times$-compatible quantizations of $\bC^\times
  \times \bA^{2n-1}$ and of $\bC^\times \times \Delta^{2n-1}$,
  equipped with the standard symplectic structures \eqref{e:ss-deg-k},
  are given by the completions of the graded algebra (for $|t|=1$ and $|u|=0$):
\begin{equation}\label{e:gr-quant}
\caD_{\hbar,k}(\bC^\times \times \bA^{n-1}) := \bC\langle t,u \rangle[\hbar][t^{-1}] /
 ([t,u]-\hbar t^{1-k}) \otimes_{\bC[\hbar]}
\Weyl_{\hbar,k}(\bC^{2n-2})
\end{equation}
with respect to the ideal $(\hbar)$ and the ideal
generated by $\hbar, u, z_1', \ldots,
z_{2n-2}'$, respectively.
\end{theorem}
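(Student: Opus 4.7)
The plan is to separate existence (a direct construction plus a flatness check) from uniqueness (an inductive obstruction argument leveraging $\bC^\times$-equivariance).

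For existence, I would first verify that $\caD_{\hbar,k}(\bC^\times \times \bA^{n-1})$ is flat over $\bC[\hbar]$ by applying the Diamond lemma to the generators $t, t^{-1}, u, z_{2i-1}', z_{2i}$ with the stated relations: the relations $[t,u]=\hbar t^{1-k}$, $[z_{2i-1}', z_{2i}]=\hbar$, and the vanishing of all other brackets among generators let one rewrite any product into a normal form $t^a u^b \prod (z_{2i-1}')^{c_i} z_{2i}^{d_i}$, and the resulting $\bC[\hbar]$-module is free on these monomials. The appropriate completions (respectively $\hbar$-adic and $(\hbar, u, z_{2i-1}')$-adic) then give flat deformations of $\cO(\bC^\times \times \bA^{2n-1})$ and $\cO(\bC^\times \hat\times \Delta^{2n-1})$. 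The grading $|t|=1$, $|u|=0$, $|z_{2i-1}'|=k$, $|z_{2i}|=0$, $|\hbar|=k$ makes all relations homogeneous, giving a well-defined Euler derivation $\Eu$ and hence a $\bC^\times$-compatible structure. Finally, I would check that the semiclassical limits $[t,u]/\hbar \equiv t^{1-k}$ and $[z_{2i-1}', z_{2i}]/\hbar \equiv 1$ match the Poisson bivector $\pi_{2n,-k}$ of \eqref{e:ss-deg-k-pb}, which confirms that these algebras quantize the standard symplectic structures.

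For uniqueness, let $A_\hbar$ be any $\bC^\times$-compatible quantization. Since $\Eu$ decomposes $A_\hbar$ topologically into weight spaces and reduces mod $\hbar$ to the Euler derivation on $\cO(X)$, every homogeneous element of $\cO(X)$ lifts to a homogeneous element of $A_\hbar$ of the same degree. I would lift the classical generators to homogeneous elements $\tilde t, \tilde u, \tilde z_{2i-1}', \tilde z_{2i} \in A_\hbar$; here $\tilde t$ is invertible in the completion since its reduction $t$ is. The remainder of the argument is an $\hbar$-adic induction: assume all defining relations of \eqref{e:gr-quant} have been arranged to hold modulo $\hbar^{m+1}$, and then modify the lifts by homogeneous elements in $\hbar^m A_\hbar$ so that the relations hold modulo $\hbar^{m+2}$. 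The obstruction at order $m$ is a homogeneous 2-cocycle in the classical Poisson complex on $\bC^\times \hat\times \Delta^{2n-1}$; because $t, u, z_{2i-1}', z_{2i}$ form a Darboux coordinate system for $\pi_{2n,-k}$, the classical Hamiltonian vector fields of the generators reduce (modulo units) to $\partial_u, \partial_t, \partial_{z_{2i}}, \partial_{z_{2i-1}'}$, which are jointly surjective onto the classical algebra in every $\bC^\times$-weight. Hence the obstruction is a coboundary and can be killed by simultaneous homogeneous corrections. Convergence is automatic in both completions since the corrections lie in ever-higher powers of $\hbar$. Once all relations hold, the resulting $\bC^\times$-equivariant $\bC[\hbar]$-algebra map from the completed target algebra to $A_\hbar$ reduces to the identity modulo $\hbar$, and flatness together with a graded Nakayama argument upgrades it to an isomorphism.

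The main obstacle I expect is the inductive step: the various relations interact, so modifying $\tilde u$ to correct $[\tilde t, \tilde u]$ potentially disturbs $[\tilde u, \tilde z_i]$, and similarly for the other generators. Consequently one must treat the family of obstructions as a single 2-cocycle (with one entry per commutation relation) and choose corrections for all generators simultaneously. The combinatorics are controlled precisely because the Darboux structure makes the classical coboundary operator elementary, and the $\bC^\times$-weight constraint isolates a manageable piece of the cocycle in each graded component. The affine case $\bC^\times \times \bA^{2n-1}$ follows from the same argument carried out order-by-order in $\hbar$, without needing the $(u, z_{2i-1}')$-adic completion.
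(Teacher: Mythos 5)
Your proposal is correct and takes essentially the same route as the paper's proof: lift the Darboux generators $t,u,z_1',\ldots,z_{2n-2}'$ homogeneously into an arbitrary $\bC^\times$-compatible quantization and correct them order by order in $\hbar$ so the commutation relations of \eqref{e:gr-quant} hold, the obstructions being killable precisely because the coordinates are Darboux for \eqref{e:ss-deg-k-pb} — this is the paper's induction, with the interacting-relations bookkeeping you worry about handled there by the explicit antiderivative argument of Remark \ref{r:darboux}. Your additional Diamond-lemma/flatness verification of existence is fine but is taken as immediate in the paper.
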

Note that, for $k=1$, $\caD_{\hbar,k}(\bC^\times \times \bA^{n-1})$ is
the Rees algebra of the ring of differential operators on $\bC^\times
\times \bA^{n-1}$; for general $k$, the ring of differential operators
is recovered by setting $\hbar=1$.  Also, the completion of
\eqref{e:gr-quant} is indeed a completed tensor product of
completions.

Putting the theorem together with Corollary \ref{c:ss-nf} immediately
yields the following.  We say that two quantizations of $\cO(X)$ are
\emph{equivalent} if they are isomorphic as
$\bC[\![\hbar]\!]$-algebras via a continuous isomorphism that is the
identity modulo $\hbar$ (this is also known as gauge equivalence). In
the $\bC^\times$-compatible case, we can ask for a
graded equivalence, i.e., an equivalence preserving the $\bC^\times$-action.
\begin{corollary}
  Let $X$ and $x$ be as in Corollary \ref{c:ss-nf}. Then the unique
  $\bC^\times$-compatible quantization of $\hat X_{\bC^\times \cdot
    x}$ up to graded equivalence
 is given in the theorem. In
  particular, every compatible quantization of $X$ restricts to this
  one, up to graded equivalence.
\end{corollary}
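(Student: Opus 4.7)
The plan is to deduce the corollary directly from Corollary \ref{c:ss-nf} together with the uniqueness part of Theorem \ref{t:q-unique}, and to handle the ``in particular'' sentence by a localization-and-completion construction applied to any given $\bC^\times$-compatible quantization of $\cO(X)$.

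For the first claim, by Corollary \ref{c:ss-nf} the formal neighborhood $\hat X_{\bC^\times \cdot x}$ is $\bC^\times$-equivariantly symplectomorphic to $\bC^\times \hat \times \Delta^{\dim X - 1}$ with the standard symplectic structure \eqref{e:ss-deg-k} of degree $k$. Transporting any $\bC^\times$-compatible quantization along this isomorphism reduces the assertion to Theorem \ref{t:q-unique} applied to $\bC^\times \hat \times \Delta^{\dim X - 1}$, which identifies the unique such quantization (up to graded equivalence) with the stated completion of $\caD_{\hbar,k}(\bC^\times \times \bA^{n-1})$.

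For the ``in particular'' statement, let $A_\hbar$ be a $\bC^\times$-compatible quantization of $\cO(X)$. I would produce from $A_\hbar$ a $\bC^\times$-compatible quantization of $\hat \cO(X)_{\bC^\times \cdot x}$ in two steps. (i) By Lemma \ref{l:prod}, after restricting to a $\bC^\times$-stable affine open $U \ni x$, there is a homogeneous weight-one function $t \in \cO(U)$ that is invertible on $\bC^\times \cdot x$; pick a homogeneous lift $\tilde t \in A_\hbar|_U$ and form the Ore localization $A_\hbar[\tilde t^{-1}]$, which inherits the Euler derivation via $\Eu(\tilde t^{-1}) = -\tilde t^{-1}$ and is a $\bC^\times$-compatible quantization of $\cO(U)$. (ii) Complete $A_\hbar[\tilde t^{-1}]$ along the two-sided ideal generated by lifts of the maximal ideal $\mathfrak{p} \subseteq \cO(U)^{\bC^\times}$ cutting out $\bar x$. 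The result is a $\bC^\times$-compatible quantization of $\hat X_{\bC^\times \cdot x}$, and applying the uniqueness already established completes the proof.

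The main obstacle is step (i): one must verify that $\tilde t$ satisfies the Ore condition in $A_\hbar$. This reduces to the observation that $[\tilde t, a] \in \hbar A_\hbar$ and that the Euler derivation shifts $\bC^\times$-weights by known amounts, so the usual recursive solution of the Ore equations order-by-order in $\hbar$ converges $\hbar$-adically; the $\bC^\times$-compatibility and the quantization axiom $\tilde a \star \tilde b - \tilde b \star \tilde a \equiv \hbar \widetilde{\{a,b\}} \pmod{\hbar^2}$ are preserved throughout. Step (ii) is then a routine $\mathfrak{p}$-adic completion that commutes with both the grading and the $\hbar$-adic topology, since $\mathfrak{p}$ is $\bC^\times$-stable and lies in the weight-zero part of the quantized algebra.
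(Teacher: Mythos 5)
Your argument is correct and follows the same route as the paper, which states the corollary as an immediate consequence of Corollary \ref{c:ss-nf} combined with Theorem \ref{t:q-unique} and leaves the ``restriction'' step implicit. Your localization-and-completion construction (inverting a homogeneous lift $\tilde t$, which is routine since $\operatorname{ad}(\tilde t)$ is divisible by $\hbar$ and the algebra is $\hbar$-adically complete, then completing along the ideal of the punctured line) is exactly the standard way to make that implicit step precise, matching the analogous completions the paper itself uses later for symplectic reflection algebras.
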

\begin{proof}[Proof of Theorem \ref{t:q-unique}]
  % By Theorem \ref{t:e-db} we can replace $\hat X_{\bC^\times \cdot x}$
  % by $\bC^\times \times \Delta^{2n-1}$ with the standard
  % symplectic structure \eqref{e:ss-deg-k} and hence Poisson bivector
  % \eqref{e:ss-deg-k-pb}.  It is immediate to verify that the
  % completion of \eqref{e:gr-quant} quantizes this.  To show uniqueness
  This follows from the same argument as the proof in \S
  \ref{ss:without-moser} of Theorem \ref{t:e-db}, just like the case
  of uniqueness of the quantization of the formal disk $\Delta^{2n}$
  itself.  Namely, let $Y$ be either $\bC^\times \times \bA^{2n-1}$ or
  $\bC^\times \times \Delta^{2n-1}$. For any quantization $A_\hbar$ of
  $Y$, fix a $\bC[\![\hbar]\!]$-module isomorphism $A_\hbar \cong
  \cO(Y)[\![\hbar]\!]$, compatible with the projection to $\cO(Y)$.
  We then can view the quantization as given by an associative,
  continuous multiplication $\star$ on $\cO(Y)[\![\hbar]\!]$.
  
  Inductively, assume that, for $f,g \in \cO(Y)$,
  %$\{t,u,z_1',\ldots,z_{2n-2}'\}$, then $v \star w - w \star v \equiv
  %\hbar \{u,v\} \pmod{\hbar^n}$ 
  then $f \star g - g \star f \equiv \hbar \{f,h\} \pmod{\hbar^m}$ for
  some $m \geq 2$. (It is clear, for the base case, that we can do
  this when $m=2$.) Then, as in the proof of Theorem \ref{t:e-db},
  first we modify $z_i$ so that they commute with $t$ modulo
  $\hbar^m$.  The reader can safely skip the remainder of the proof,
  since it closely follows \S \ref{ss:without-moser}; we provide the
  details for completeness.

  Let $[f,g]_\star := f \star g - g \star f$. We iteratively apply the
  gauge transformation $z_i \mapsto z_i - \hbar^{-1} \int [t,z_i]_\star
  du$ (fixing the other coordinates $t,u$, and $z_j$ for $j \neq i$).
  If $\hbar^{-m}[t,z_i]_\star \pmod{\hbar}$ vanishes to order $k$ at
  the augmentation ideal $J=(u,z_1,\ldots,z_{2n-2})$, then the
  transformation is the identity modulo $\hbar^{m-1} J^{k+2}$ and, after
  applying it, $\hbar^{-m}[t,z_i]_\star\pmod{\hbar}$ vanishes to order
  $k+1$. So this sequence of transformations converges and,
  afterwards, $[t,z_i]_\star \equiv 0 \pmod{\hbar^{m+1}}$, without
  affecting $[t,z_j]_\star$ for $j \neq i$ or any commutators modulo
  $\hbar^2$.  Applying this to all the $z_j$ means that we can assume
  $[t,z_j]_\star \equiv 0 \pmod{\hbar^{m+1}}$ for all $j$.

  Let $B$ be the subalgebra of $\cO(Y)$ which is generated by the
  $z_i$, i.e., $\bC[z_1,\ldots,z_{2n-2}]$ or
  $\bC[\![z_1,\ldots,z_{2n-2}]\!]$.  As a consequence of the above,
  the degree-zero part of the centralizer of $t$ modulo $\hbar^{m+1}$
  in $A_\hbar/\hbar^{m} A_\hbar$ under the star product is
  $B[\hbar]/(\hbar^m)$.  Then, for each $i$, the vector field $\xi :=
  t^k\hbar^{-m}[z_i,-]_\star \pmod{\hbar}$ is Poisson on $\bA^{2n-2}$
  or $\Delta^{2n-2}$ with respect to the Poisson backet $\{-,-\}'=
  t^k\{-,-\}$) and hence Hamiltonian. Thus $\xi=t^k\{f,-\}$ for some
  $f \in B$ and we can perform a gauge transformation $z_i \mapsto z_i
  - \hbar^{m-1}f$ (fixing the other coordinates) so that
  $[z_i,z_j]_\star \equiv 0 \pmod{\hbar^{m+1}}$ for all $j$.  (Note
  that this is simpler than what we had to do in the Darboux theorem
  itself, owing to $m \geq 2$).

  After all of this, the star-product commutators modulo $\hbar^{m+1}$
  are as desired amongst the coordinates $t,z_1,\ldots,z_{2n-2}$,
  and we are left only with commutators involving the coordinate $u$. Now, we have
  \[ [t,[u,z_i]_\star]_\star \equiv [[t,u]_\star,z_i]_\star \equiv 0
  \pmod{\hbar^{m+2}},
\]
and hence $t^k\hbar^{-m}[u,z_i]_\star \pmod{\hbar}$ lies in $B$.
Therefore, as before, $\xi:=t^k\hbar^{-m}[u,-]_\star \pmod{\hbar}$ is
a Poisson and hence Hamiltonian vector field on $B$, so
$\xi|_B=t^k\{f,-\}$ for some $f \in B$. Then, after performing the
gauge transformation $u \mapsto u-\hbar^{m-1} f$, we will obtain
$[u,z_i]_\star \in \hbar^{m+1} A_\hbar$, as desired.

After all of these steps, we only need to make a further gauge
transformation so that $[t,u] \equiv \hbar t^{k-1}
\pmod{\hbar^{m+1}}$.  Since $t$ and $u$ commute with the $z_i$ modulo
$\hbar^{m+1}$, the centralizer of the $z_i$ modulo $\hbar^{m+1}$ is
$\bC[t,t^{-1}][u]$ or $\bC[t,t^{-1}][\![u]\!]$ (depending on whether
we used $\bA^{2n-1}$ or $\Delta^{2n-1}$).  Since $[t,u]_\star$ has
degree $1-k$, we obtain that $\hbar^{-m}(t^{k-1}[t,u]_\star-\hbar)
\pmod{\hbar}$ is in $\bC[u]$ or $\bC[\![u]\!]$. We can therefore make
the substitution $u \mapsto u-\hbar^m \int
\hbar^{-m}t^{k-1}([t,u]_\star-\hbar) du$, which will ensure that
$[t,u] \equiv \hbar t^{1-k} \pmod{\hbar^{m+1}}$, and all brackets will
be as desired modulo $\hbar^{m+1}$, completing the induction.
% following the
%   usual argument of the formal Darboux theorem, we can modify each of
%   $t,u,z_1',\ldots,z_{2n-2}'$ by adding elements of $\hbar^n \cO(Y)$
%   so that this holds modulo $\hbar^{n+1}$.  Or, we can follow the
%   procedure of Remark \ref{r:darboux} (explained in the case of $u$,
%   but the other cases are similar).
\end{proof}
\begin{remark}
  When $k > 0$, one can instead ask about filtered quantizations.
  Namely, for $B$ a graded Poisson algebra with bracket of degree
  $-k$, one can ask for a filtered algebra $A = \bigcup_m A_{\leq m}$
  whose associated graded algebra $\gr(A) = \bigoplus_m \gr_m(A) :=
  A_{\leq m} / A_{\leq m-1}$ is isomorphic to $B$ as an algebra, such
  that $[A_{\leq m}, A_{\leq n}] \subseteq A_{\leq m+n-k}$ and
  $\gr_{m+n-k}[a,b] = \{\gr_m a, \gr_n b\}$ for $a \in A_{\leq m}$ and
  $b \in A_{\leq n}$.  If one asks also that $A$ be complete with
  respect to the descending part of the filtration, i.e., $A = \lim_{m
    \to -\infty} A/A_{\leq m}$ as a vector space (which is a trivial
  condition if $B$ is nonnegatively graded), then one can generalize
  the results above to show that there is a unique filtered
  quantization as well.

  Note that, for $k=1$, this is a formal consequence, since then
  filtered quantizations are equivalent to $\bC^\times$-compatible
  quantizations: to a filtered algebra we associate its completed Rees
  algebra; the reverse direction is given by taking
  $\bC^\times$-finite vectors and then setting $\hbar=1$.  But for $k
  \geq 2$, not all filtered quantizations can be obtained from
  $\bC^\times$-compatible quantizations: for example, for $k=2$ and
  $B=\bC[t,y]$ with $|y|=2, |t|=1$, and $\{t,y\}=t$, one can take the
  filtered quantization $\bC\langle t,y \rangle / ([t,y]-t-1)$, which
  cannot be obtained from a compatible quantization (but it can, in
  view of the uniqueness result above, after inverting $t$ and
  completing with respect to the filtration; then, in the above
  coordinates, $y=(t^2+t)u$).  (One can, however, make a more direct
  link by working over $\bC[\![\hbar^{1/k}]\!]$: then, every filtered
  quantization can be obtained from a $\bC[\![\hbar^{1/k}]\!]$-algebra
  with a compatible $\bC^\times$ action.)
  % $\bC\langle t,u \rangle / ([t,u]=t^{1-k} + t^{-k})$ can not be
  % obtained in this way (here $n=2$ and $\Spec B = \bC^\times \times
  % \bA^1$ with the standard symplectic structure of degree $-k$).

  Similarly, in all later results in this paper on quantization, one
  can obtain analogous results for filtered algebras by passing to the
  $\bC^\times$-finite vectors and setting $\hbar=1$, in the case when
  $k$ is positive. We will not mention this further.
\end{remark}

\section{Equivariant Darboux-Weinstein theorems}\label{s:main}
In this section, we give the main theorems (\ref{t:main-sl},
\ref{t:conv}, and \ref{t:main-sr}), their application to
$\caD$-modules on Poisson varieties (Corollary \ref{c:main-dmod} and
Remark \ref{r:main-dmod}), and their quantization (Theorem
\ref{t:quant}).  We begin in the first subsection with the statements
of the main theorems and corollaries, and prove only the corollaries.
We then give an application to $\caD$-modules, followed by the proof of
Theorem \ref{t:pre-main} and the main theorems, and finally discuss
quantization.  The proofs of the main theorems can be omitted on a
first reading.

\subsection{Main results}

Recall that, in the non-equivariant setting, the formal
Darboux-Weinstein theorem (\cite{We}; see also \cite[Proposition
3.3]{Kalss}) says that, for $Y \subseteq X$ an a symplectic leaf of a
Poisson variety $X$ and $y \in Y$ a point, then the formal
neighborhood $\hat X_y$ is Poisson isomorphic to a product, with $S$ a
formal Poisson scheme (with a single closed point and Poisson bivector
vanishing there):
\begin{equation} \label{e:fwd} \hat X_y \cong \hat Y_y \times S \cong
  \Delta^{\dim Y} \times S.
\end{equation}
In the $\bC^\times$-equivariant setting, this no longer holds when $y$
is not a fixed point, as in the following examples 
(although one can easily see it does
hold for elliptic fixed points as in Remark \ref{r:efp}).
\begin{example}\label{ex:counterex1}
  Consider $X = \bC^{\times} \times \Delta^2 = \Spf
  \bC[t,t^{-1}][\![u,z]\!]$ with Poisson bivector $\partial_u \wedge
  (t\partial_t+\partial_z)$ of degree zero (note that the resulting
  bracket automatically satisfies the Jacobi identity since the
  Poisson bivector has rank two). Then the Poisson center consists of
  those functions in $\bC[t,t^{-1}][\![z]\!]$ annihilated by the
  Hamiltonian vector field $\xi_u = t \partial_t + \partial_z$, i.e.,
  $\bC[t e^{-z},t^{-1} e^z]$, whose spectrum is $\bC^\times$
  with the dilating $\bC^\times$ action.  On the other hand, the
  Poisson center of the product of the symplectic $\bC^\times$-variety
  $\bC^{\times} \times \Delta$ and $\Delta$ is the functions on the
  latter factor, $\bC[\![z]\!]$, which is not isomorphic to $\bC[t
  e^{-z}, t^{-1} e^z]$, and is equipped with the trivial $\bC^\times$
  action.

  We remark that, to get a trivial Poisson center rather than $\bC[t
  e^{-z}, t^{-1} e^z]$ as above, we could have instead used the Poisson
  bivector $\partial_u \wedge (t \partial_t + \gamma z \partial_z)$
  for $\gamma \in \bC$ irrational: then the Poisson center is trivial
  since the differential equation $\gamma z \partial_z (f) = m f$ has
  no solutions with $m \in \bZ$ and $f \in \bC[\![z]\!]$ (i.e.,
  $z^{m/\gamma} \notin \bC[\![z]\!]$).
\end{example}
\begin{example}\label{ex:counterex2}
  We can also give a singular example where the Poisson structure is
  nondegenerate on the smooth locus (in particular, this implies it is
  generically symplectic). Let $X = \bC^\times \times \Delta 
  \times Z$ where $Z$ is the formal neighborhood of the origin of the
  hypersurface $x^3+y^3+z^3=0$ in $\bC^3$ (i.e., the cone over a
  smooth genus one curve in $\bC \mathbf{P}^2$).  Let $\xi$ be the
  usual Euler vector field on $Z$, i.e., $x \partial_x + y \partial_y
  + z \partial_z$.  Equip $Z$ with the Jacobian Poisson structure,
\[
\pi_Z = 3 \bigl( x^2 \partial_y \wedge \partial_z + y^2 \partial_z
\wedge \partial_x + z^2 \partial_x \wedge \partial_y \bigr).
\]
Now, $\xi$ is a Poisson vector field, i.e., $L_{\xi}(\pi_Z)=0$, where
$L_{\xi}$ is the Lie derivative, or equivalently, $[\xi,\pi_z]=0$,
where $[-,-]$ is the Schouten-Nijenhuis bracket.  Therefore, the
following bivector $\pi$ is Poisson (i.e., $[\pi,\pi]=0$):
\begin{equation}
  \pi = \partial_u \wedge (t \partial_t +\xi) + \pi_Z.
\end{equation}
On the other hand, we claim that this is not equivalent via change of
coordinates to a product of formal Poisson schemes $(\bC^\times 
\times \Delta) \times Z$.  Indeed, if it were, 
% taking the degree
% zero part of the centralizer of $t$, we get that $Z = \Spf
% \bC[\![x,y,z]\!] / (x^3+y^3+z^3)$ (as a subalgebra of all functions on
% $(\bC^\times \times \Delta) \times Z$), and then 
there would
have to be a degree zero coordinate $u'$ such that $\{u',t\}=t$ and
$u'$ commutes with $x,y$, and $z$.
%(note that the degree zero condition
%is unnecessary, since taking any homogeneous component of such a $u'$
%of nonzero degree we would get a Poisson central element, and the
%Poisson center is zero since the Poisson bivector is generically
%nondegenerate).  
The first condition shows that $u' \in u + f$ for
some $f \in \bC[\![x,y,z]\!]$, and the second condition shows that
$\{f,g\}=-\xi(g)$ for all $g \in \bC[\![x,y,z]\!]$.  This means that
$\xi$ is a Hamiltonian vector field on $\bC[\![x,y,z]\!]$. But that is
false, since every Hamiltonian vector field has positive degree,
whereas $\xi$ has degree zero.  This is a contradiction.
\end{example}
The preceding example demonstrates exactly what goes wrong, however,
as we prove in the following theorem.  Let $X$ be a
$\bC^\times$-Poisson variety with a homogeneous Poisson bivector, $Y
\subseteq X$ be a $\bC^\times$-invariant symplectic leaf, and $y \in
Y$ be a point with minimal stabilizer under the $\bC^\times$-action.
If the stabilizer of $y$ is in fact trivial, then $\hat Y_{\bC^\times
  \cdot y}$ has a standard symplectic structure \eqref{e:ss-deg-k}. In
general, we obtain a symplectic structure \eqref{e:ss-deg-k-nf}.
% Applying the formal Darboux-Weinstein theorem to $\hat X_y$, write
% $\hat X_y \cong \hat Y_y \times S$, as formal Poisson schemes,
% for
% some $S$ with some Poisson structure $\pi_S$.
Let $\hat X_{\bC^\times \cdot y}$ denote the formal neighborhood of
$\bC^\times \cdot y$ in $X$ (equivalently, we can replace $X$ with any
open affine subvariety in which the punctured line $\bC^\times \cdot
y$ is closed.) Let $t \in \cO_{\hat X_{\bC^\times \cdot y}}$ be the
coordinate along the line $\bC^\times$, i.e., $t(c \cdot y) = c$ for
all $c \in \bC^\times$.

For every (formal) scheme $Z$, let $\Omega_Z$ be the sheaf of K\"ahler
differentials. If $Z^\circ$ is the smooth locus with inclusion $j:
Z^\circ \to Z$, then 
%%following \cite{Fer-chdfcas, Fer-cfdssac}
%% we define
we define $\tilde \Omega_Z := j_* \Omega_{Z^\circ}$, the underived pushforward of $\Omega_{Z^\circ}$ (this differs from the definition of
of \cite{Fer-chdfcas,Fer-cfdssac} where one takes $\Omega_{Z}$ modulo torsion\footnote{Thanks to a referee for clarifying this point.}). 
We will also use the complexes of de Rham differentials, $\Omega_Z^\bullet = \wedge_{\cO_X}^\bullet \Omega_Z$ and $\tilde \Omega_Z^\bullet := j_* \Omega_{Z^\circ}^\bullet$.
%In
%the case that $Z$ is normal that we will consider, 
%%$\tilde \Omega_Z$ to be
%can be identified with 
%%the quotient of $\Omega_Z$ by torsion.  (Note that this does not
%%coincide with the definition $j_* \Omega_{Z^\circ}$ often found in the
%%literature: if $Z$ is normal, then $j_* \Omega_{Z^\circ}$ is the
%%reflexive envelope of what we define as $\tilde \Omega_Z$).  
Moreover,
for us $Z$ will be affine, so $\Omega_Z$ and $\tilde \Omega_Z$ can be
identified with their global sections.  Assume that $Z$ is normal
affine.  Then, $H^1(\tilde \Omega_Z^\bullet) = H^1 \Gamma(Z^\circ,
\Omega^\bullet_{Z^\circ})$, which equals global closed algebraic one-forms on
$Z^\circ$ mod global exact algebraic one-forms.  When $Z$ is a complex
algebraic variety, this embeds into the topological de Rham cohomology
$H^1(Z^\circ)$, because if an algebraic one-form $\alpha$ on a smooth
variety is the differential of a $C^\infty$ function $f$, then $f$
must itself be algebraic. In particular, if $H^1(Z^\circ) = 0$, then
$H^1(\tilde \Omega^\bullet_Z) = 0$.

For $Z$ Poisson, let $H(Z)$ denote the Lie algebra of Hamiltonian
vector fields on $Z$, i.e., $H(Z) = \{\xi_f \mid f \in Z\}$, with
$\xi_f(g) := \{f,g\}$. Let $P(Z)$ denote the Lie algebra of Poisson
vector fields, i.e., vector fields $\xi$ which are Lie derivations of
the Poisson bracket (equivalently, $[\xi, \pi] = 0$, using the
Schouten-Nijenhuis bracket, with $\pi$ the Poisson bivector). We
always have $H(Z) \subseteq P(Z)$.  When $Z$ is normal, affine, and
symplectic on the smooth locus, then $P(Z)/H(Z) \cong H^1(\tilde
\Omega^\bullet_Z)$, since for $\omega$ the generic symplectic form,
then $i_\xi \omega$ is a regular one-form on $Z^\circ$ for every
vector field $\xi$, and $\xi$ is Hamiltonian or Poisson if and only if
$i_\xi \omega$ is closed or exact, respectively.

  % Below we say that the formal scheme $\hat X_y$ is normal if it is
  % normal as an ordinary scheme, i.e., it is integrally closed (by
  % Zariski's main theorem, this is equivalent to a Zariski
  % neighborhood of $y$ being normal).

  % By Grothendieck's theorem on comparison of algebraic and
  % topological de Rham cohomology, $H^*(Z^\circ) =
  % \mathbf{H}(Z^\circ, \Omega_{Z^\circ})$, the hypercohomology.  In
  % particular, this gives a decomposition
% \begin{equation}
% H^1(Z^\circ) \cong H^1(\tilde \Omega_Z) \oplus R^1 \Gamma(Z^\circ, \cO_{Z^\circ}),
% \end{equation}
% as the degree one part of the spectral sequence computing
% hypercohomology.

\begin{theorem}\label{t:main-sl}
  Assume that $y$ has trivial stabilizer under $\bC^\times$.
  In a formal neighborhood of $\bC^\times \cdot y$, with all
  isomorphisms $\bC^\times$-equivariant,
\begin{enumerate}
\item[(i)] $\hat Y_{\bC^\times \cdot y} \cong \bC^{\times} \times
  \Delta^{\dim Y - 1}$ with a standard homogeneous symplectic
  structure \eqref{e:ss-deg-k} of degree $k$.
\item[(ii)] 
 Forgetting the Poisson structure,
  % as formal $\bC^\times$-schemes,
\begin{equation}
\hat X_{\bC^\times
    \cdot y} \cong \hat Y_{\bC^\times \cdot y} \times S,
\end{equation} 
for some formal scheme $S$ with the trivial $\bC^\times$
action. The projection $\hat X_{\bC^\times \cdot y} \to \hat Y_{\bC^\times \cdot y}$ is Poisson. The second projection is not Poisson, but 
$t^k \{-,-\}$ restricts on $\cO(S)$
%  $\cO(S)$ identifies with the algebra of degree-zero elements
% commuting with $t,z_1,\ldots,z_{2n-2}$.
% Moreover,
to a Poisson bracket.
\item[(iii)] We can choose coordinates as above so that
  the Poisson bivector on $\hat X_{\bC^\times \cdot y}$ has the form
\begin{equation}\label{e:hx-pbv}
  \pi_{\dim Y,-k} + t^{-k}\partial_u \wedge \xi + t^{-k}\pi_S,
\end{equation}
for $\pi_S$ the Poisson bivector on $S$, and $\xi$ a vector field
on $S$ satisfying $[\xi,\pi_S] = k\pi_S$.
\end{enumerate}
\end{theorem}
More generally, in the case where the stabilizer of $y$ is arbitary, we deduce:
\begin{corollary}\label{c:main-sl-nf}
\begin{enumerate}
\item[(i)] $\hat Y_{\bC^\times \cdot y} \cong \bC^{\times} \times
  \Delta^{\dim Y - 1}$ with a standard homogeneous symplectic
  structure \eqref{e:ss-deg-k-nf} of degree $k$.
\item[(ii)]  Let $\tilde {\hat
    Y}_{\bC^\times \cdot y} \to \hat Y_{\bC^\times \cdot y}$ be the
  $\ell$-fold \'etale cover obtained by adjoining $t^{1/\ell}$.  Then,
  there is a canonical formal Poisson scheme $S$ with a $\bZ/\ell$
  action and a continuous $\bC^\times$-equivariant isomorphism
\begin{equation}
\hat X_{\bC^\times \cdot y} \cong (\tilde  {\hat Y}_{\bC^\times \cdot y} \times S)/(\bZ/\ell),
\end{equation}
which is Poisson when the RHS is equipped with
a structure as in \eqref{e:hx-pbv}. 
% Alternatively, in coordinates \eqref{e:ss-deg-k-nf} for $\hat
% Y_{\bC^\times \cdot y}$, let $\cO(T) \subseteq \hat \cO(X)_{\bC^\times
%   \cdot y}$ be the centralizer of $t,z_1,\ldots,z_{2n-2}$; then we get
% a $\bC^\times$-isomorphism,
% \begin{equation}\label{e:prod-t}
%   \hat X_{\bC^\times \cdot y} \cong \hat Y_{\bC^\times \cdot y} \times_{\bC^\times} T,
% \end{equation}
% where the fibered product is with respect to the subalgebra
% $\bC[t,t^{-1}]$ of $\cO(\hat Y_{\bC^\times \cdot y})$ and $\cO(T)$.
% The Poisson bivector on $\hat X_{\bC^\times \cdot y}$ is then given by
% \begin{equation}\label{e:hx-pbv-t}
%   \pi_{\hat Y_{\bC^\times \cdot y}} + \pi_T + t^{-k} \partial_u \wedge \xi,
% \end{equation}
% with $\xi$ a degree-zero Poisson vector field on $T$.
\end{enumerate}
\end{corollary}
\begin{proof}
  Part (i) is an immediate consequence of Corollary \ref{c:ss-nf}.

Part (ii) follows directly from Theorem \ref{t:main-sl}.  Namely, when
we adjoin $t^{1/\ell}$, then the latter coordinate again has degree
$1$, so we can apply the theorem and get a product decomposition for
the \'etale $\ell$-fold cover.
\end{proof}
% \begin{remark}
%  The final version of the statement is convenient, even when $\ell=1$,
% for quantization: see Theorem \ref{t:quant}.
% \end{remark}
For simplicity, we restrict for the remainder of the subsection
 to the case $\ell=1$.
% , although the statements
% all admit straightforward generalizations, via part (4) above, to the
% case of arbitrary $\ell$.
%% Let us continue to define $T$ and $S$ as in the theorem, so $S =
%% T/\!/\bC^\times$, and in the case that $y$ has trivial stabilizer
%% under $\bC^\times$, then $T = S \times \bC^\times$.
To avoid confusion, when $S$
%% or $T$
is a formal Poisson scheme, we let
the Hamiltonian vector fields of functions on $S$ 
%%and $T$ 
be denoted
as $\xi^S_f$.
%% and $\xi^T_f$, respectively.
% When we equip $S$ with the Poisson bivector $\pi_S = t^{-k}
% \pi_X|_{\cO_S}$, we let the corresponding Hamiltonian vector fields
% on $S$ be denoted by $\xi^S_f = t^{-k} \xi_f|_{\cO_S}$, to avoid
% confusion with Hamiltonian vector fields on $X$.  We can also give a
% converse result (recall that $\xi_f$ is the Hamiltonian vector field
% $\{f,-\}$ corresponding to a function $f$):$
\begin{theorem}\label{t:conv}
  Let $m \geq 1$ and fix $k \in \bZ$.  Given two formal Poisson
  schemes $S, S'$ with Poisson bivectors $\pi_S$ and $\pi_{S'}$,
  equipped with vector fields $\xi, \xi'$ satisfying
  $[\xi,\pi_S]=k\pi_S$ and $[\xi',\pi_{S'}]=k\pi_{S'}$, the formal
  Poisson schemes $\bC^\times \times \Delta^{2m-1} \times S$
  and $\bC^\times \times \Delta^{2m-1} \times S'$ with
  bivectors \eqref{e:hx-pbv} are $\bC^\times$-equivariantly Poisson
  isomorphic (with $\bC^\times$ acting only on the first factor) if
  and only if there is formal Poisson isomorphism $S \to S'$ taking
  $\xi$ to $\xi'+\xi^S_f$ for some $f \in \cO_{S'}$.
%   More generally, suppose $T, T'$ are $\bC^\times$-Poisson schemes
%   with central homogeneous coordinate functions $t, t'$ both of degree
%   $k \geq 1$, equipped with Poisson vector fields $\xi, \xi'$.  Then,
%   formal Poisson schemes $Z,Z'$ with bivectors as in \eqref{e:hx-pbv2}
%   are
% %   $\Delta^{2m-1} \times
% %   T$ and $\Delta^{2m-1} \times T'$ with bivectors
% %   \eqref{e:hx-pbv2} are
%   $\bC^\times$-equivariantly Poisson isomorphic
%   %(with $\bC^\times$ acting only on the second factor) 
%   if and only if there is formal $\bC^\times$-Poisson isomorphism $T
%   \to T'$ taking $\xi$ to $\xi'+t^{k}\xi^T_f$ for some $f \in
%   \cO_{T'}^{\bC^\times}$.  The same is true if we replace $Z, Z'$ by
%   $\Delta^{2m-2} \times Z, \Delta^{2m-2} \times Z'$ giving
%   $\Delta^{2m-2}$ the standard symplectic structure (and the trivial
%   $\bC^\times$-action).
\end{theorem}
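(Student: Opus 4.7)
I would prove the two implications separately.

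For the ``if'' direction, given a Poisson isomorphism $\psi \colon S \to S'$ satisfying $\psi_* \xi = \xi' + \xi^{S'}_f$, I define $\Phi$ by $\Phi^* t' = t$, $\Phi^* z_i' = z_i$, $\Phi^* u' = u - \psi^* f$, and $\Phi^* g' = \psi^* g'$ for $g' \in \cO(S')$, and verify it is Poisson. The bracket identities involving only $t, z_i, u$ are immediate because $\psi^* f \in \cO(S)$ Poisson-commutes with $t$ and $z_i$. The key check is
\[ \{\psi^* g', u - \psi^* f\}_X = \Phi^*\{g', u'\}_{X'}, \]
which, using \eqref{e:hx-pbv} on both sides, unpacks to $\xi(\psi^* g') = \psi^*((\xi' + \xi^{S'}_f)(g'))$, exactly the hypothesis on $\psi$.

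For the ``only if'' direction, let $\Phi \colon X \to X'$ be a $\bC^\times$-equivariant Poisson isomorphism; after composing with a $\bC^\times$-translation in $X'$, I assume $\Phi(y) = y'$. The crucial step is to normalize so that $\Phi^* t' = t$ and $\Phi^* z_i' = z_i$. The pullbacks $\Phi^* t', \Phi^* z_i'$ satisfy the same Poisson relations on $X$ as the standard coordinates $t, z_i$ (namely $\{\Phi^* t', \Phi^* z_i'\} = 0$ and $\{\Phi^* z_{2i-1}', \Phi^* z_{2i}'\} = (\Phi^* t')^{-k}$). A parametric version of the $\bC^\times$-equivariant Darboux argument in Theorem \ref{t:e-db}, carried out with coefficients in $\cO(S)$, produces a $\bC^\times$-equivariant Poisson automorphism of $X$ sending $t \mapsto \Phi^* t'$, $z_i \mapsto \Phi^* z_i'$; composing $\Phi$ with its inverse achieves the normalization. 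Once $t, z_i$ are matched, pulling back the relations $\{t', u'\}_{X'} = -t'^{-k+1}$ and $\{z_i', u'\}_{X'} = 0$ forces $\Phi^* u' - u$ to be annihilated by $\partial_u$ and every $\partial_{z_j}$, so $\Phi^* u' = u + h$ for some $h \in \cO(S)$.

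With $\Phi$ normalized, I define $\psi \colon S \to S'$ by letting $\psi^*$ be the restriction of $\Phi^*$ to $\cO(S') \subseteq \cO(X')$: since $g' \in \cO(S')$ Poisson-commutes with $t', z_i'$ on $X'$, the image $\psi^* g'$ Poisson-commutes with $t, z_i$ on $X$, and therefore lies in $\cO(S)$ by Theorem \ref{t:main-sl}(ii). The restriction of the bivector on $X$ to $\cO(S)$ equals $t^{-k} \pi_S$, and similarly on $X'$, so $\psi$ is Poisson. Comparing $\{\psi^* g', u + h\}_X$ with $\Phi^*\{g', u'\}_{X'}$ using \eqref{e:hx-pbv} yields $\psi_*\xi = \xi' + \xi^{S'}_{-\psi_* h}$, giving the desired identity with $f = -\psi_* h$. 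The main obstacle is the parametric Darboux normalization: one extends the iterative argument in the proof of Theorem \ref{t:e-db} to families, which amounts to verifying that the obstructions at each inductive step, living in formal de Rham / Poisson cohomology of $\bC^\times \hat \times \Delta^{2m-1}$ with coefficients in $\cO(S)$, still vanish. This follows from the relative formal Poincar\'e lemma for the formal polydisc.
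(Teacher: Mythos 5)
Your \emph{if} direction is correct and is essentially the paper's (an explicit isomorphism fixing $t,z_i$, sending $u'\mapsto u-\psi^*f$ and acting by $\psi^*$ on $\cO_{S'}$), and the final bookkeeping of your \emph{only if} direction (once $t$ and the $z_i$ are matched, conclude $\Phi^*u'=u+h$ with $h\in\cO_S$ and read off $\psi_*\xi=\xi'+\xi^{S'}_{-\psi_*h}$) also agrees with the paper. The genuine gap is the step you delegate to a ``parametric version of Theorem \ref{t:e-db} with coefficients in $\cO(S)$.'' There are two problems with that justification. First, the bivector \eqref{e:hx-pbv} is \emph{not} a family of symplectic structures on $\bC^\times \hat\times \Delta^{2m-1}$ parametrized by $S$: $\cO_S$ is not Poisson-central (indeed $\{u,g\}=t^{-k}\xi(g)$ for $g\in\cO_S$, so the Hamiltonian flow of $u$ moves $S$ through $\xi$), hence a coordinate change of the disk factor ``with $\cO(S)$-coefficients'' need not be a Poisson automorphism of the full degenerate bivector, and producing an automorphism of $(X,\pi)$ sending $t,z_i$ to $\Phi^*t',\Phi^*z_i'$ is precisely the nontrivial content of the step. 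Second, the claimed vanishing ``by the relative formal Poincar\'e lemma'' is false as stated: the first de Rham cohomology of $\bC^\times\hat\times\Delta^{2m-1}$ is nonzero (the class of $t^{-1}dt$), and already in the proof of Theorem \ref{t:e-db} the would-be obstruction is killed not by exactness but by homogeneity (degree zero forces $f\in 1+u\bC[\![u]\!]$). Indeed the statement of the theorem itself shows that not all obstructions vanish: the class of $\xi$ modulo Hamiltonian vector fields is exactly the surviving invariant.

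What actually closes this step (and is what the paper does) is to correct $\Phi$ by a convergent composition of flows of explicitly chosen Hamiltonians of the full structure; such flows are automatically Poisson automorphisms of \eqref{e:hx-pbv} no matter how degenerate it is or how the factors are coupled. Concretely, after a rescaling $t\mapsto\lambda t$, $u\mapsto\lambda^{-1}u$ one writes $\Phi^*(t')=tf$ with $f\equiv 1$ modulo the ideal of the punctured line, and uses the Hamiltonians $t^{k}\int(f_q-1)\,du$ to arrange $\Phi^*(t')=t$ order by order; then Hamiltonians of the form $fz_{2i}$ and $-gz_{2i-1}$ to arrange $\Phi^*(z_i')=z_i$, checking that these flows preserve $t$. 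Moreover, before this the paper reduces to the case where $\pi_S$ and $\pi_{S'}$ vanish at the closed point (splitting any symplectic directions of $S$ off into the disk factor); this is what guarantees that $\Phi$ preserves the tangent space of the symplectic factor (the span of the Hamiltonian vector fields along the punctured line), so that the induction for the $z_i$ can even begin. Your proposal omits both this reduction and the Hamiltonian-flow mechanism, so the normalization $\Phi^*t'=t$, $\Phi^*z_i'=z_i$ is asserted rather than proved; repairing it essentially amounts to reproducing the paper's argument.
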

\begin{remark}
As we will see in the proof, the isomorphisms in the theorem
can be chosen to form a commutative square:
\[
\xymatrix{
\bC^\times \times \Delta^{2m-1} \times S \ar@{->>}[r] \ar[d]
 & S \ar[d] \\
\bC^\times \times \Delta^{2m-1} \times S' \ar@{->>}[r]
 & S'.
}
\]
\end{remark}
\begin{corollary} \label{c:ham-pbv-isom} If $S$ has the property that
  all Poisson vector fields are Hamiltonian, then all Poisson
  bivectors of the form \eqref{e:hx-pbv} (i.e., for all choices of
  $\xi$ satisfying $[\xi,\pi_S]=k\pi_S$) are $\bC^\times$-Poisson
  isomorphic.  
% The same is true for Poisson bivectors of the form
%   \eqref{e:hx-pbv2} with $S = T /\!/ \bC^\times$ and $t$ central of
%   degree $\ell \geq 1$, and $T$ an irreducible formal
%   $\bC^\times$-Poisson scheme, equipping $S$ with the Poisson bivector
%   $\pi_S = t^k \pi_T|_{S}$.
\end{corollary}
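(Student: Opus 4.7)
The plan is to reduce this directly to Theorem \ref{t:conv} applied in the case $S = S'$, where the only remaining content is a simple linear-algebraic/Lie-theoretic observation about the difference of two vector fields satisfying $[\xi,\pi_S] = k\pi_S$.

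Concretely, fix two Poisson bivectors of the form \eqref{e:hx-pbv} on $\bC^\times \hat \times \Delta^{2m-1} \hat \times S$, built from the same Poisson structure $\pi_S$ but from two different vector fields $\xi$ and $\xi'$, each satisfying the compatibility $[\xi,\pi_S] = k\pi_S = [\xi',\pi_S]$. By Theorem \ref{t:conv}, to establish a $\bC^\times$-Poisson isomorphism between these two models, it suffices to exhibit a formal Poisson automorphism $\varphi : S \to S$ and a function $f \in \cO_S$ with $\varphi_*\xi = \xi' + \xi^S_f$. The key observation is that we may simply take $\varphi$ to be the identity: then the required identity becomes $\xi - \xi' = \xi^S_f$. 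Since the Schouten--Nijenhuis bracket with $\pi_S$ is linear in its first argument, the difference satisfies
\[
[\xi - \xi',\pi_S] = k\pi_S - k\pi_S = 0,
\]
so $\xi - \xi'$ is a Poisson vector field on $S$. By the standing hypothesis that every Poisson vector field on $S$ is Hamiltonian, there exists $f \in \cO_S$ with $\xi - \xi' = \xi^S_f$, and we are done.

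The only subtlety to check is that the argument works in the formal (topological) category in which $S$ is a formal Poisson scheme. This is routine: the Schouten--Nijenhuis calculation is purely local, the hypothesis on $S$ is stated for its Poisson vector fields (so is applied as given), and the appeal to Theorem \ref{t:conv} is a black box. I anticipate no real obstacle; the entire proof is essentially a one-line consequence of Theorem \ref{t:conv} together with the Hamiltonian hypothesis.
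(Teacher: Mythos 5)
Your proof is correct and is essentially identical to the paper's: both take $S=S'$ with the identity map in Theorem \ref{t:conv}, observe that $\xi-\xi'$ is Poisson by bilinearity of the Schouten--Nijenhuis bracket, and invoke the hypothesis that Poisson vector fields on $S$ are Hamiltonian to produce the required $f \in \cO_S$. No gaps.
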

\begin{proof} If $\xi$ and $\xi'$ satisfy $[\xi, \pi_S] = k\pi_S =
  [\xi', \pi_S]$, then $\xi-\xi'$ is Poisson. Under the assumption of
  the corollary, it is also Hamiltonian. Then, by Theorem
  \ref{t:conv}, taking $S=S'$ and the identity map $S \to S'=S$, we
  conclude that the two Poisson bivectors \eqref{e:hx-pbv} given by
  $\xi$ and $\xi'$ are related by a $\bC^\times$-Poisson
  automorphism. (By the remark, it can be chosen to be compatible with
  the projection to $S$.)
  % For the last statement, we apply the above argument except with
  % $\pi_T$ instead of $\pi_S$. Then, $\xi-\xi'$ will be a degree zero
  % Poisson vector field annihilating $t$, whose restriction to $\cO_S =
  % \cO_T^{\bC^\times}$ is Hamiltonian, say with Hamiltonian function $f
  % \in \cO_S = \cO_T^{\bC^\times}$.  If $\xi_f := i_{\pi_T}(df)$ is the
  % Hamiltonian with respect to the Poisson structure on $T$, we get
  % that $\eta:=\xi-\xi' -t^k \xi_f$ annihilates both $\cO_S$ and
  % $t$. Thus, for any $g \in \cO_T$ homogeneous,
  % $\eta(g^\ell)=kg^{\ell-1}\eta(\ell)=0$.  Since $T$ is irreducible, $\eta=0$,
  % and hence $\xi - \xi' = t^k \xi_f$. The statement now follows
  % from Theorem \ref{t:conv}.
\end{proof}
\begin{corollary}\label{c:pic}
  For a fixed formal Poisson scheme $S$ and fixed $m \geq 1$ and $k
  \in \bZ$ equipped with a vector field $\eta$ satisfying $[\eta,
  \pi_S] = k\pi_S$, the set of $\bC^\times$-Poisson isomorphism
  classes of bivectors of the form \eqref{e:hx-pbv} is in bijection
  with $(\eta + P(S)/H(S))/\Aut(S,\pi_S)$.  
  % More generally, for $T$ an irreducible formal $\bC^\times$-Poisson
  % scheme with $T/\!/\bC^\times = S$ and invertible central
  % homogeneous coordinate $t$ of degree $\ell \geq 1$,
% %Poisson center $\bC[t,t^{-1}]$ with $t$ of degree $k \geq 1$,
% the set of $\bC^\times$-Poisson isomorphism classes of bivectors of
% the form \eqref{e:hx-pbv2} is in bijection with $(\eta +
% P'(S)/H(S))/\Aut(S,\pi_S)$, where $P'(S)$ is the vector space of
% Poisson vector fields on $S$ which are extendable to Poisson vector
% fields on $T$ annihilating $t$.
\end{corollary}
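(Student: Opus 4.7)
The plan is to parametrize the possible vector fields $\xi$ and then identify the two-step equivalence relation coming from Theorem \ref{t:conv} with the description in the statement.

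First, I would observe that the set of vector fields $\xi$ on $S$ satisfying $[\xi, \pi_S] = k\pi_S$ is precisely the affine coset $\eta + P(S)$: if $\xi$ satisfies this condition, then $[\xi - \eta, \pi_S] = k\pi_S - k\pi_S = 0$, so $\xi - \eta$ is a Poisson vector field, and conversely adding any Poisson vector field to $\eta$ preserves the defining relation. Thus bivectors of the form \eqref{e:hx-pbv} are parametrized by $\eta + P(S)$.

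Next, I would apply Theorem \ref{t:conv} in the case $S = S'$. It says that the bivectors associated to $\xi$ and $\xi'$ are $\bC^\times$-Poisson isomorphic if and only if there exist $\phi \in \Aut(S, \pi_S)$ and $f \in \cO_S$ with $\phi_*(\xi) = \xi' + \xi^S_f$. Equivalently, $\phi_*(\xi) - \xi' \in H(S)$. This equivalence factors naturally into two steps: first, the freedom of choosing $f$ quotients $\eta + P(S)$ by $H(S)$, producing the affine set $\eta + P(S)/H(S)$ over the vector space $P(S)/H(S)$; then, the freedom of choosing $\phi$ quotients further by the induced action of $\Aut(S, \pi_S)$.

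The one thing to check carefully is that this second action is well-defined, i.e.\ that $\Aut(S, \pi_S)$ acts on $\eta + P(S)/H(S)$. Since $\phi$ preserves $\pi_S$, its pushforward $\phi_*$ carries Poisson vector fields to Poisson vector fields and Hamiltonian vector fields to Hamiltonian vector fields, hence descends to an action on $P(S)/H(S)$. Moreover, for any such $\phi$ one computes $[\phi_*(\eta), \pi_S] = \phi_*([\eta, \phi_*^{-1}\pi_S]) = \phi_*([\eta, \pi_S]) = k\pi_S$, so $\phi_*(\eta) - \eta \in P(S)$, meaning the coset $\eta + P(S)/H(S)$ is preserved. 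Thus the two-step quotient is exactly $(\eta + P(S)/H(S))/\Aut(S, \pi_S)$, and this is in bijection with the isomorphism classes of bivectors \eqref{e:hx-pbv} as desired. No serious obstacle arises here; the only subtlety is the bookkeeping that the coset structure (rather than a vector space) is compatible with the $\Aut(S, \pi_S)$-action, which is handled by the computation above.
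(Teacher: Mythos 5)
Your proposal is correct and follows essentially the same route as the paper: apply Theorem \ref{t:conv} with $S=S'$, note that the admissible $\xi$ form the coset $\eta+P(S)$, and quotient by Hamiltonian vector fields and then by $\Aut(S,\pi_S)$. Your explicit check that $\phi_*(\eta)-\eta\in P(S)$ and that $\phi_*$ preserves $H(S)$ is exactly the point the paper compresses into its parenthetical remark that the inclusion $\eta+P(S)\subseteq\Aut(S,\pi_S)\cdot(\eta+P(S))$ is an equality.
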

Note that, if $S$ is normal and symplectic on the smooth locus, we can
alternatively write $(\eta+P(S)/H(S))/\Aut(S,\pi_S)$ as
$(i_{\xi}\omega + H^1(\tilde \Omega^\bullet_S))/\Aut(S,\pi_S)$.
\begin{proof}[Proof of Corollary \ref{c:pic}]
  Let $\xi$ be any other vector field on $S$ satisfying $[\xi, \pi_S]
  = k\pi_S$.  The theorem implies that the resulting bivectors given
  by \eqref{e:hx-pbv} are isomorphic if and only if $\xi \in
  \Aut(S,\pi_S) \cdot (\eta + H(S))$.  On the other hand, $\xi-\eta
  \in P(S)$, so $\xi \in (\eta + P(S)) \subseteq \Aut(S,\pi_S) \cdot
  (\eta + P(S))$, proving the result (note that this inclusion
  $\subseteq$ is actually an equality).  
%For the second paragraph, note
%  as in the end of the proof of the previous corollary that any degree
%  zero Poisson vector field on $T$ annihilating $t$ is uniquely
%  determined by its restriction to $S$, by irreducibility of $T$.
\end{proof}
Next we explain why, when $X$ admits a symplectic resolution, all
bivectors in \eqref{e:hx-pbv} 
%and \eqref{e:hx-pbv2} 
are equivalent. This is similar to \cite[Proposition
5.5]{PS-pdrhhvnc}:
\begin{theorem}
 \label{t:main-sr} Let $X$ be a normal Poisson variety,
$Y$ a symplectic leaf, and $y \in Y$. Let $S$ be a formal
Poisson scheme appearing either in an ordinary Darboux-Weinstein
decomposition
$\hat X_y \cong \hat Y_y \times S$, or in
an equivariant one $\hat X_{\bC^\times \cdot y} \cong
\hat Y_{\bC^\times \cdot y} \times S$ of Theorem \ref{t:main-sl} for
$y$ having trivial stabilizer in $\bC^\times$.
If $X$ admits a symplectic resolution,
% $S$ is the formal neighborhood of a (normal Poisson) variety
% admitting a symplectic resolution.
then all Poisson vector fields on $S$ are Hamiltonian.
More generally, each of the following conditions implies the next:
\begin{enumerate}
\item[(i)] $X$ admits a symplectic resolution, or more generally is a
  symplectic singularity;
\item[(ii)] $X$ is generically symplectic and
there is an analytic neighborhood $U$ of $y$ such that,
for $U^\circ$ the smooth locus of $U$, we have
  $H^1(U^\circ,\bC) = 0$; 
\item[(iii)] $X$ is generically symplectic and $H^1(\tilde \Omega^\bullet_{S}) = 0$; and
\item[(iv)] All Poisson vector fields on $S$ are Hamiltonian.
\end{enumerate}
% \item Conversely, if $X$ is normal and generically symplectic and
%   $H^1(\tilde \Omega_{\hat X_{\bC^\times \cdot y}}) \neq 0$, then
%   there exists a Poisson structure on $\hat X_{\bC^\times \cdot y}$
%   of degree $d$ which is not $\bC^\times$-equivariantly isomorphic
%   to one of the form $\pi_{Y_{\bC^\times \cdot y}} + t^d \pi_S$ (for
%   all $d \in \bZ$).  More generally, if $y$ does not necessarily
%   have trivial stabilizer, then for a decomposition $\hat
%   X_{\bC^\times \cdot y} \cong \Delta^{\dim Y - 1} \times T$ as
%   in Theorem \ref{t:main-sl}, then the above holds replacing
%   conditions (iii) and (iv) with
% \begin{enumerate}
% \item[(iii')] $X$ is generically symplectic and $H^1(\tilde
%   \Omega_T) = 0$; and
% \item[(iv')] All degree-zero Poisson vector fields on $T$ annihilating
%   $t$ are Hamiltonian.
% \end{enumerate}
\end{theorem}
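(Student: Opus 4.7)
My plan is to verify the implications in order $(iii) \Rightarrow (iv)$, $(ii) \Rightarrow (iii)$, and $(i) \Rightarrow (ii)$, working from easiest to hardest; the headline statement (all Poisson vector fields on $S$ are Hamiltonian when $X$ admits a symplectic resolution) then follows as the composition of all three.

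For $(iii) \Rightarrow (iv)$, the preceding discussion already provides the isomorphism $P(S)/H(S) \cong H^1(\tilde\Omega_S)$ whenever $S$ is normal and symplectic on its smooth locus. Normality of $S$ follows from normality of $\hat X_y$ via the Darboux--Weinstein factorization $\hat X_y \cong \hat Y_y \hat\times S$ with smooth factor $\hat Y_y$ (and the analogous argument in the equivariant case via Theorem \ref{t:main-sl}), while generic nondegeneracy of the Poisson structure on $S$ is exactly the generic-symplecticity part of $(iii)$. So $(iii)$ translates directly into $P(S) = H(S)$.

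For $(ii) \Rightarrow (iii)$ I would apply the analytic Darboux--Weinstein theorem to write a small Stein neighborhood $U$ of $y$ as $\Delta_{\mathrm{an}}^{\dim Y} \times S_{\mathrm{an}}$, so that $U^\circ \cong \Delta_{\mathrm{an}}^{\dim Y} \times S_{\mathrm{an}}^\circ$; contractibility of the disk and K\"unneth give $H^1(U^\circ) \cong H^1(S_{\mathrm{an}}^\circ)$, and $(ii)$ then forces the right-hand side to vanish. A closed algebraic one-form on $S^\circ$ induces a closed analytic one-form on a neighborhood in $S_{\mathrm{an}}^\circ$, which is exact analytically; its primitive is forced to be algebraic by the observation recalled just before the theorem statement, yielding $H^1(\tilde\Omega_S) = 0$.

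The deepest step is $(i) \Rightarrow (ii)$. Generic symplecticity is built into the definition of symplectic singularities, so the substantive content is the topological vanishing $H^1(U^\circ) = 0$ for small $U$. When $X$ admits a symplectic resolution $\pi : \tilde X \to X$, the natural plan is to take $U$ a contractible Stein neighborhood, observe that $\tilde U := \pi^{-1}(U)$ deformation retracts onto the projective central fiber $\pi^{-1}(y)$, which is simply connected by Kaledin's theorem, hence $H^1(\tilde U) = 0$; a codimension analysis of the exceptional locus (via semismallness of $\pi$ and the classification of codimension-two transverse symplectic singularities as type ADE) would then give $H^1(U^\circ) = H^1(\tilde U) = 0$. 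For the general symplectic-singularities case without a global resolution, I would invoke the Namikawa--Kaledin local structure theorems to reduce to the resolution situation on a $\mathbf{Q}$-factorial terminalization. I expect this last implication to be the main obstacle, both because of the codimension-one components of the exceptional locus and because of the extra care needed to treat the resolution-free case.
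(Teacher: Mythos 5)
Your steps (iii)$\Rightarrow$(iv) and the symplectic-resolution case of (i)$\Rightarrow$(ii) are essentially the paper's own arguments (Kaledin's vanishing of $H^1$ of the central fiber, a contracting tubular neighborhood, semismallness, and the Kleinian codimension-two strata to show the kernel of $\pi_1(\rho^{-1}(U^\circ)) \to \pi_1(\rho^{-1}(U))$ is torsion). For the ``symplectic singularities'' half of (i), the paper does not reduce to a $\mathbf{Q}$-factorial terminalization; it simply quotes Xu's finiteness of the local algebraic fundamental group of the smooth locus (as observed by Namikawa), which immediately makes $H_1(U^\circ)$ torsion and hence $H^1(U^\circ,\bC)=0$ -- a cleaner route than the one you sketch and flag as uncertain.

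The genuine gap is in your (ii)$\Rightarrow$(iii). The slice $S$ in statement (iii) is a \emph{formal} Poisson scheme: sections of $\tilde \Omega_S$ have formal power-series coefficients, so a closed one-form on $S^\circ$ does not ``induce a closed analytic one-form on a neighborhood in $S_{\mathrm{an}}^\circ$'' -- there is in general no convergence, hence no analytic form and no analytic primitive to which the algebraicity observation could be applied. Passing from the topological vanishing $H^1(U^\circ)=0$ to the formal-algebraic vanishing $H^1(\tilde\Omega_S)=0$ is exactly the hard point, and it is not a formality: the paper handles it by taking an arbitrary resolution $\rho\colon V \to U$, invoking Hartshorne's comparison theorem $\bH^\bullet_{DR}(\hat V_{\rho^{-1}(y)}) \cong H^\bullet(\rho^{-1}(U))$, and then a Mayer--Vietoris argument to remove $\rho^{-1}(Z)$ ($Z$ the singular locus), which yields $\bH^\bullet_{DR}(\hat U_y \setminus Z) \cong H^\bullet(U^\circ)$; the injection $H^1(\tilde \Omega_{\hat U_y}) \hookrightarrow \bH^1_{DR}(\hat U_y\setminus Z)$ and the (non-Poisson) product decomposition $\hat U_y \cong \Delta^{\dim Y}\hat\times S$ then give $H^1(\tilde\Omega_S)=0$. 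Your route also leans on an analytic Darboux--Weinstein splitting of the singular Poisson germ $(U,y)$, which is an additional unproved input the paper avoids (it only ever uses the formal product decomposition, and it does not need the splitting to be Poisson for this step). So to repair your argument you would need both a justification of the analytic splitting and, more seriously, a formal-versus-topological de Rham comparison of Hartshorne type; without the latter, the step from $H^1(S^\circ_{\mathrm{an}})=0$ to $H^1(\tilde\Omega_S)=0$ does not go through.
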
  
In the presence of a $\bC^\times$-action on $S$, we can put the
above results together to conclude:
\begin{corollary} \label{c:ct-act} Let $X,Y,y,d$ be as in Theorem
  \ref{t:main-sl} and suppose that the stabilizer of $y$ is trivial.
  For a decomposition therein, suppose that $S$ admits a $\bC^\times$
  action such that the Poisson structure is homogeneous of degree
  $-k$, and that all Poisson vector fields on $S$ are Hamiltonian
  (e.g., if $X$ admits a symplectic resolution or is a symplectic
  singularity).  Then $\hat X_{\bC^\times \cdot y}$ is
  $\bC^\times$-equivariantly Poisson isomorphic to $(\hat
  Y_{\bC^\times \cdot y} \times S, \pi_{\hat Y_{\bC^\times \cdot
      y}} + \pi_S)$, with the new action of $\bC^\times$ on $S$.
  Moreover, in this case, $S$ is isomorphic to every slice appearing
  in an ordinary Darboux-Weinstein decomposition $\hat X_y \cong \hat
  Y_y \times S$.
\end{corollary}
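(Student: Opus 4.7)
The plan is to combine Theorem \ref{t:main-sl}(iii), Corollary \ref{c:ham-pbv-isom}, and an explicit twist of coordinates that trades the trivial $\bC^\times$-action on $S$ for the given new one. By Theorem \ref{t:main-sl}(iii), the given decomposition $\hat X_{\bC^\times \cdot y} \cong \hat Y_{\bC^\times \cdot y} \hat \times S$ (with the original action, trivial on $S$) comes equipped with a Poisson bivector of the form $\pi_{\dim Y, -k} + t^{-k} \partial_u \wedge \xi + t^{-k} \pi_S$ for some vector field $\xi$ on $S$ satisfying $[\xi,\pi_S] = k \pi_S$. Let $\Eu_S$ denote the Euler vector field of the new $\bC^\times$-action on $S$; since $\pi_S$ has degree $-k$ under this action, $[\Eu_S, \pi_S] = -k \pi_S$, so $-\Eu_S$ also satisfies the commutation relation required of $\xi$. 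The hypothesis that every Poisson vector field on $S$ is Hamiltonian (which, by Theorem \ref{t:main-sr}, is implied when $X$ has symplectic singularities) allows me to invoke Corollary \ref{c:ham-pbv-isom} to produce a $\bC^\times$-Poisson automorphism, for the original action, sending $\xi$ to $-\Eu_S$; so I may assume $\xi = -\Eu_S$ outright.

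The crucial remaining step is to construct the change of coordinates
\[
\Phi : \hat Y_{\bC^\times \cdot y} \hat \times S \longrightarrow \hat Y_{\bC^\times \cdot y} \hat \times S, \qquad \Phi(t, u, s) := (t,\, u,\, t^{-1} \cdot s),
\]
where $\cdot$ denotes the new $\bC^\times$-action on $S$. This intertwines the combined new action on the source (nontrivial on $S$) with the original action on the target (trivial on $S$): the prefactor $t^{-1}$ exactly cancels the new scaling of $S$. The principal obstacle, and the main technical point, is to verify that $\Phi$ pulls the target bivector $\pi_{\hat Y_{\bC^\times \cdot y}} - t^{-k} \partial_u \wedge \Eu_S + t^{-k} \pi_S$ back to the direct-sum bivector $\pi_{\hat Y_{\bC^\times \cdot y}} + \pi_S$. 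For a homogeneous $S$-function $y_\alpha$ of weight $d_\alpha$ one has $\Phi^* y_\alpha = t^{-d_\alpha} y_\alpha$, and a direct calculation shows that the factors $t^{-k}$ and $t^{-d_\alpha - d_\beta + k}$ balance on the two sides of the bracket identity. The coupling term $-t^{-k} \partial_u \wedge \Eu_S$ in the target is exactly what is needed to absorb the $\partial_t$-derivative of $t^{-d_\alpha}$ that appears in $\{u, \Phi^* y_\alpha\}$ under the source bivector.

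For the final assertion, that $S$ is Poisson isomorphic to every slice arising in an ordinary Darboux-Weinstein decomposition, I would complete the equivariant decomposition just constructed at the single point $y$. This yields an ordinary Darboux-Weinstein decomposition $\hat X_y \cong \hat Y_y \hat \times S$, and uniqueness of the formal slice up to Poisson isomorphism in the classical theorem (see, e.g., \cite[Proposition 3.3]{Kalss}) then identifies this $S$ with any other slice.
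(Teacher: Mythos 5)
Your argument is correct and is essentially the paper's own proof: you first use the Hamiltonian hypothesis together with Theorem \ref{t:conv} (equivalently Corollary \ref{c:ham-pbv-isom}) to normalize $\xi=-\Eu_S$ in \eqref{e:hx-pbv}, and your explicit twist $(t,u,s)\mapsto(t,u,t^{-1}\cdot s)$, i.e.\ $y_\alpha\mapsto t^{-d_\alpha}y_\alpha$ on functions, is exactly the isomorphism the paper encodes by passing to the weight-zero subalgebra $B\subseteq \cO_S[t,t^{-1}]$ of elements commuting with $u$. The final step (completing at $y$ and invoking uniqueness of the ordinary Darboux--Weinstein slice) also matches the paper's treatment.
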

\begin{proof}
  Let $\eta$ be the Euler vector field of the action of $\bC^\times$
  on $S$.  Then $[\eta, \pi_S] = -k \pi_S$. Therefore, by Theorem
  \ref{t:conv}, we can assume $\xi = -\eta$ in \eqref{e:hx-pbv}.  Let
  $B \subseteq \cO_S[t,t^{-1}]$ be the subalgebra of elements of
  weight zero with respect to the total grading by $|t|=1$ and the
  weight grading on $\cO_S$ produced by the $\bC^\times$-action.  That
  is, $B$ consists of the elements of $f \in \cO_S[t,t^{-1}]$
  annihilated by $-\eta + t \partial_t = \xi + t \partial_t$.  In
  other words, $B$ is the subalgebra of $\cO_S[t,t^{-1}]$ of elements
  commuting with $u$.  We therefore obtain $X = (\bC^\times 
  \times \Delta^{\dim Y -1}) \times \Spf B$, where now this is a
  product of formal schemes, with the Poisson structure on the first
  factor $\bC^\times \times \Delta^{\dim Y -1}$ the standard one
  of \eqref{e:ss-deg-k}. The first statement then follows from Theorem
  \ref{t:e-db}.  For the second statement, we can complete at $y$ and
  obtain an ordinary Darboux-Weinstein decomposition.  But, the $S$
  appearing in this decomposition is unique up to formal Poisson
  automorphisms, since $S$ is the centralizer of $\hat \cO_{Y,y}$, and
  two choices of subalgebra $\hat \cO_{Y,y} \subseteq \hat \cO_{X,y}$,
  i.e., of projections $\hat X_y \onto \hat Y_y$, differ by
  Hamiltonian isomorphisms (cf.~the proof of Theorem \ref{t:conv}
  below).
\end{proof}
\subsection{Consequence for the $\caD$-module on $X$}
Recall from \cite{ESdm} the $\caD$-module, $M(X)$, on $X$ which
represents solutions of Hamiltonian flow.  For simplicity, assume $X$
is affine.  Then this is defined by $M(X) = H(X) \caD_X \setminus
\caD_X$, for $H(X)$ the Lie algebra of Hamiltonian vector fields on
$X$, $\caD_X$ the canonical right $\caD$-module on $X$ such that
$\Hom(\caD_X,-)$ is the global sections functor (i.e., $\caD_X =
\iota^!  (I_X \caD_V \setminus \caD_V)$ where $\iota: X \to V$ is any
embedding into a smooth affine variety $V$ with $X$ having ideal
$I_X$), and with $H(X)$ acting on $\caD_X$ on the left via the
inclusion $H(X) \hookrightarrow \Gamma(\caD_X)=$ differential
operators on $X$.  See \cite{ESdm} for more details.

We may deduce from the preceding the following consequence on the
$\caD$-module $M(X)$.  Let $i: Y \to X$ be the inclusion of a
symplectic leaf $Y$ into $X$.
\begin{corollary}\label{c:main-dmod}
  Suppose that $X$ is a symplectic singularity and that the
  assumptions of Corollary \ref{c:ct-act} are satisfied.  Then, as
  weakly equivariant $\caD$-modules on $\hat Y_{\bC^\times \cdot y}$,
\begin{equation}\label{e:c-main-dmod}
H^0(i^* M(X))|_{\hat Y_{\bC^\times \cdot y}} \cong \Omega_{\hat
  Y_{\bC^\times \cdot y}} \otimes \operatorname{HP}_0(S),
\end{equation}
equipping $\operatorname{HP}_0(S)$ with the weight grading by the
$\bC^\times$-action on $S$.
\end{corollary}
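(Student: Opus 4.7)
The plan is to reduce to the product situation via Corollary \ref{c:ct-act}, then perform a K\"unneth-type $\caD$-module computation. The hypotheses provide a $\bC^\times$-equivariant Poisson isomorphism $\hat X_{\bC^\times \cdot y} \cong \hat Y_{\bC^\times \cdot y} \hat \times S$ with summed Poisson bivector. Since $M(X) = H(X) \caD_X \setminus \caD_X$ is Zariski-local in the Poisson structure, its restriction to $\hat X_{\bC^\times \cdot y}$ is determined by this product. The corollary thus amounts to the local computation $H^0 j^* M(V \hat \times S) \cong \Omega_V \otimes_\bC \HP_0(S)$, where $V = \hat Y_{\bC^\times \cdot y}$ is symplectic and $j \colon V \hookrightarrow V \hat \times S$ is the zero section at the unique closed point $s_0 \in S$ (where $\pi_S$ vanishes).

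For the K\"unneth step, I would exploit the explicit generators $\xi^V_f \otimes g + f \otimes \xi^S_g$ of $H(V \hat \times S)$. Because $V$ is symplectic, the family $\{\xi^V_f\}_{f \in \cO_V}$ already spans the entire tangent bundle of $V$, so quotienting $\caD_V \boxtimes \caD_S$ by the left ideal they generate collapses the $V$-direction to $\Omega_V$ (recovering the known identity $M(V) = \Omega_V$ in the case $S = \mathrm{pt}$). The mixed terms $f \otimes \xi^S_g$ have $\cO_S$-coefficient $f$ vanishing at $s_0$, so they drop out upon pulling back via $j^*$, leaving the pure $S$-direction contribution $\cO_S/\{\cO_S,\cO_S\} = \HP_0(S)$.

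For the weakly equivariant structure, the isomorphism of Corollary \ref{c:ct-act} is $\bC^\times$-equivariant for the postulated action on $S$ (of Poisson degree $-k$), which induces a weight grading on $\cO_S$ and hence on $\HP_0(S) = \cO_S/\{\cO_S,\cO_S\}$. The identification constructed above respects this grading by naturality of $M$ under $\bC^\times$-equivariant Poisson isomorphisms.

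The main obstacle will be making the K\"unneth step rigorous: carefully tracking the derived pullback $Lj^*$ and confirming that the contribution of the mixed Hamiltonian generators to $H^0 j^* M$ really does vanish, despite those generators living in the left ideal $H \cdot \caD$ prior to restriction. This parallels local arguments underlying \cite[Theorem 5.1]{PS-pdrhhvnc} but requires care with the $\mathfrak{m}_{s_0}$-adic topology on $\cO_S$ and with the fact that $S$ is a formal scheme with a single closed point, so that $M(S)$ is set-theoretically supported at $s_0$ and its fiber there computes $\HP_0(S)$.
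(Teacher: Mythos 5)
Your overall strategy is sound and not far from the paper's in outline: the reduction via Corollary \ref{c:ct-act}, the K\"unneth identity $M(\hat Y_{\bC^\times\cdot y}\hat\times S)\cong \Omega_{\hat Y_{\bC^\times\cdot y}}\boxtimes M(S)$ (which indeed follows because the mixed Hamiltonian fields $g\,\xi^V_f+f\,\xi^S_g$ are $\cO$-linear combinations of the pure ones, so the left ideal is generated by $H(V)$ and $H(S)$), and the observation that equivariance of the decomposition induces the weight grading are all fine (apart from a slip: in the mixed term $f\,\xi^S_g$ the coefficient $f$ lies in $\cO_V$ and does not vanish at $s_0$; nothing needs to "drop out" for the K\"unneth step anyway). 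The genuine gap is the final identification $H^0 i_{s_0}^{*}M(S)\cong \HP_0(\cO_S)$, which you justify by saying that $M(S)$ is "set-theoretically supported at $s_0$" because $S$ has a single closed point. That argument is vacuous: the underlying space of any formal scheme $\Spf \cO_S$ is just its closed point, so every $\caD$-module on $S$ is "set-theoretically supported" there, and no Kashiwara-type conclusion follows. Moreover, the $*$-fiber of a right $\caD$-module at a point is in general not its space of coinvariants: for the zero Poisson structure on a formal disk one has $M(S)=\caD_S$, whose fiber at the origin is spanned by the constant-coefficient operators, while $\HP_0(\cO_S)=\cO_S$; these are (topological) duals, not isomorphic. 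So the fiber computation requires a real argument, and this is exactly where the hypothesis that $X$ has symplectic singularities enters (finitely many leaves, hence holonomicity of $M(X)$ and finite-dimensionality of $\HP_0(\cO_S)$) --- a hypothesis your proof never invokes, which is a warning sign that the step as written would "prove" false statements in degenerate cases.

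This missing identification is precisely the content the paper imports from \cite[\S 4.3]{ESdm}: its proof consists of citing that reference for the facts that $H^0(i^*M(X))$ restricted to the leaf is a local system (using finitely many symplectic leaves) whose fibers are $\HP_0(\cO_S)$, and then noting that under the decomposition of Corollary \ref{c:ct-act} everything is weakly $\bC^\times$-equivariant, which yields the asserted weight grading. Your equivariance discussion matches the paper's; to complete your version, either cite the same result of \cite{ESdm} for the fiber of $M(S)$ at the vertex, or supply an honest argument using holonomicity in place of the support claim.
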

\begin{proof}
  First, it is easy to see that $H^0 (i^* M(X))|_{\hat Y_{\bC^\times
      \cdot y}}$ is a local system on $\hat Y_{\bC^\times \cdot y}$
  (as noticed in \cite[\S 4.3]{ESdm}, this is already true restricting
  to $Y$ itself; we use here that $X$ is a union of finitely many symplectic
  leaves, which is true for every symplectic singularity).
  As noticed there, the fibers of this local system are identified
  with $\HP_0(\cO_S)$ (which is finite-dimensional).  Now, in terms of
  the decomposition of Corollary \ref{c:ct-act}, everything is weakly
  $\bC^\times$-equivariant, equipping $\HP_0(\cO_S)$ with its weight
  grading.
\end{proof}
% \begin{remark}
%   A generalization of the last two corollaries to the case where $y$
%   has stabilizer of order $k \geq 1$ is as follows (we omit the
%   proof, which is similar to those for the previous
%   generalizations): if all degree-zero Poisson vector fields on $T$
%   annihilating $t$ are Hamiltonian (e.g., if $X$ admits a symplectic
%   resolution), and $T$ admits a Poisson $\bC^\times$-action for
%   which $t$ is homogeneous of degree $k$, then we can replace $S$
%   above by $S' := \Delta \times T$, where $\Delta$ has the
%   coordinate $u$ in Theorem \ref{t:main-sl}, given weight zero. Then
%   we get a $\bC^\times$ Poisson decomposition $\hat X_{\bC^\times
%   \cdot y} \cong \Delta^{\dim Y - 2} \times S'$
   
%   %   $T \cong \bC^\times \times S'$
%   %   for $S'$ the part of degree zero with respect to the action,
%   %   and we
%   %   get a $\bC^\times$ Poisson decomposition $\hat X_{\bC^\times
%   %   \cdot
%   %   y} \cong \hat Y_{\bC^\times \cdot y} \times S'$,
%   %   equipping
%   %   $S'$ with the original grading on $T$.  Then the results go
%   %   through,
%   %   with $S$ replaced by $S'$.

%   Equivalently, taking the product of the above Poisson $\bC^\times$
%   action and the original $\bC^\times$-action, we require a
%   $\bC^\times$-action on $T$ for which $t$ has degree zero and the
%   Poisson bivector has the usual degree, $-dk$. In the case where
%   $k=1$ then this corresponds to the action on $S=T/\!/\bC^\times$
%   mentioned above, extended to act trivially on $t$.
% \end{remark}
\begin{remark}\label{r:main-dmod}
  The assumptions of Corollary \ref{c:ct-act} can be dropped, as
  follows.  All weakly $\bC^\times$-equivariant local systems on a
  formal neighborhood of a punctured line $\bC^\times \cdot y$ in a
  smooth variety are sums of weight-shifted trivial local systems.
  Therefore, if we assume nothing but the fact that $Y$ is a
  symplectic leaf closed under the $\bC^\times$-action and that $X$
  is a symplectic singularity, we know that the LHS of
  \eqref{e:c-main-dmod} must be a weakly equivariant local system on
  $\bC^\times \cdot y$ (at this point, we only need the symplectic
  singularity condition to guarantee that $M(X)$ is holonomic, which more
  generally follows if $X$ is a union of finitely many symplectic leaves). We
  can describe this local system using an ordinary Darboux-Weinstein
  slice $S$, i.e., such that $\hat X_y \cong \hat Y_y 
  \times S$.  Then, similarly to \cite[\S 5]{PS-pdrhhvnc}, the grading
  on $\HP_0(S)$ is given by an arbitrary vector field $\eta$ such that
  $[\eta, \pi_S] = -k \pi_S$ (using now the fact that all Poisson
  vector fields are Hamiltonian, from Theorem \ref{t:main-sr}). Such a
  vector field can be obtained from the Euler vector field $\Eu_X$ of
  $X$ by the projection $\pi_S: \hat X_y \to S$, namely $\eta =
  (\pi_S)_*(\Eu_X|_{\{0\} \times S})$.
\end{remark}  
% In particular, by parts \eqref{t:main-sr} and \eqref{t:main-cssl},
% if $X$ admits a symplectic resolution and $S$ has a
% $\bC^\times$-action for which $\pi_S$ has degree $d$, then $\hat
% X_{\bC^\times \cdot y} \cong \hat Y_{\bC^\times \cdot y} \times S$
% as $\bC^\times$ Poisson schemes, giving $\hat Y_{\bC^\times \cdot y}
% \subseteq \hat X_{\bC^\times \cdot y}$ the induced $\bC^\times$
% Poisson structure, and giving $S$ its induced Poisson structure but
% the new (not necessarily trivial) $\bC^\times$-action.
\subsection{Proof of Theorem \ref{t:pre-main}}\label{ss:t-pre-main-pf}
Here we deduce Theorem \ref{t:pre-main} from the preceding theorems.
Part (i) is an immediate consequence of Theorem
\ref{t:main-sl}.(i). Part (ii) is an immediate consequence of Theorem
\ref{t:e-db} and Theorem \ref{t:main-sl}.(ii). Part (iii) follows from
Corollary \ref{c:ham-pbv-isom} and Theorem \ref{t:main-sr}. Part (iv) 
is identical to Corollary \ref{c:ct-act}.

\subsection{Proof of Theorem \ref{t:main-sl}}\label{ss:t-main-sl-pf}
We first point out that, if the reader has not seen the proof in the
simpler symplectic setting in \S \ref{ss:without-moser}, it may be
helpful to read that first, as the proof of (ii) below follows the
same ideas but is a bit more complicated.

Part (i) of Theorem \ref{t:main-sl} is an immediate consequence of
Corollary \ref{c:fn}, together with the fact that the formal
neighborhood of any point in a smooth irreducible variety of dimension
$n$ is isomorphic to $\Delta^n$.

Part (ii) is the main part of the proof. We will apply \cite[Lemma
3.2]{Kalss}, following the proof of \cite[Proposition 3.3]{Kalss}.
Write $\hat Y_{\bC^\times \cdot y} = (\bC^\times \times \Delta) \times
\Delta^{\dim Y-2}$ with its standard symplectic structure
\eqref{e:ss-deg-k}.  Let $t,u$ denote homogeneous lifts to $\hat
X_{\bC^\times \cdot y}$ of the corresponding coordinate functions on
$\hat Y_{\bC^\times \cdot y}$ (we actually only require that $t$ is
invertible of weight one and $\{t,u\}(y)=1$, so not the full strength
of Theorem \ref{t:e-db}).  Let $Z := \Spf \bC[t,t^{-1}][\![u]\!]$, so
that we have the projection $X \to Z$ given by the inclusion
$\bC[t,t^{-1}][\![u]\!]$.
% Let $J \subseteq \hat \cO_{X,\bC^\times \cdot y}$ be the ideal of $Z
% := (\bC^\times \times \Delta) \times \{0\} \subseteq \hat
% Y_{\bC^\times \cdot y}$. Then the composition $Z \into X \to Z$ is
% the identity.

Consider the subalgebra $A \subseteq \hat \cO_{X, \bC^\times \cdot y}$
of degree-zero elements commuting with $t$. Let $X' := \Spf A$, so
that there is a projection $X \to X'$. Note that, although $A$ is not
a Poisson subalgebra, $A[t,t^{-1}]$ is, since it consists of the
$\bC^\times$-locally finite elements which commute with $t$.  Since
$\{-,-\}$ has degree $-k$, we conclude that $\{-,-\}':=t^k\{-,-\}$
restricts to a Poisson bracket on $X'$.

We claim that, as $\bC^\times$-formal schemes, $\hat X_{\bC^\times
  \cdot y} \cong Z \times X'$ using the two projections.  For this we
apply \cite[Lemma 3.2]{Kalss}, following the argument of
\cite[Proposition 3.3]{Kalss}.  View $\hat \cO_{X, \bC^\times \cdot
  y}^{\bC^\times}$ as a $\bC[\![u]\!] = \cO_{\Delta}$-module.  Let $J
\subseteq \hat \cO_{X, \bC^\times \cdot y}^{\bC^\times}$ be the
degree-zero part of the ideal in $\hat \cO_{X, \bC^\times \cdot y}$ of
$\bC^\times \cdot y$; note that $J$ defines the topology on
$\cO_{X,\bC^\times \cdot y}^{\bC^\times}$.  Consider the filtration
$J^\bullet$ on $\cO_{X, \bC^\times \cdot y}^{\bC^\times}$ given by
powers of $J$. Note that $J^m$ has finite codimension for all $m$
(since $J$ is a defining ideal for the topology on the affine formal
scheme $\cO_{X,\bC^\times \cdot y}^{\bC^\times}$ which has a unique
closed point, or this can be seen directly).  Note also that $u J^m
\subseteq J^{m+1}$ for all $m$.  Note that $t^{k-1}\{t,u\} \equiv 1
\pmod J$, hence $\{t,u\}$ is invertible.  Consider the operator
$\nabla_u$ defined by $\nabla_u(f) := \{t,u\}^{-1}\{t,f\}$.  This
satisfies $[\nabla_u,u]=0$, hence defines a flat connection on $\hat
\cO_{X, \bC^\times \cdot y}^{\bC^\times}$ as a
$\bC[\![u]\!]$-module. Therefore the assumptions of \cite[Lemma
3.2]{Kalss} are satisfied, and the lemma yields that $\hat \cO_{X,
  \bC^\times \cdot y}^{\bC^\times} \cong \bC[\![u]\!] \hat \otimes
(\hat \cO_{X, \bC^\times \cdot y}^{\bC^\times})^\nabla$.  Note that
the second factor is the subalgebra of degree-zero elements commuting
with $t$.  Since $\hat \cO_{X, \bC^\times \cdot y}$ is the completion
of $\hat \cO_{X, \bC^\times \cdot y}^{\bC^\times}[t,t^{-1}]$, we
obtain also that
\[
\hat \cO_{X, \bC^\times \cdot y} \cong \bC[t,t^{-1}][\![u]\!] \hat
\otimes ((\hat \cO_{X, \bC^\times \cdot y})^\nabla)^{\bC^\times},
\]
i.e., $\hat X_{\bC^\times \cdot y} \cong Z \times X'$ with the
projections as claimed.

Next, applying the formal Darboux-Weinstein theorem to $X'$, we obtain
a Poisson isomorphism $X' \cong \Delta^{\dim Y - 2} \times S$ for some
formal Poisson scheme $S$. Therefore, we get a decomposition of
$\bC^\times$-formal schemes $\hat X_{\bC^\times \cdot y} \cong Z
\times \Delta^{\dim Y - 2} \times S$, which is not Poisson, but for
which $t^k\{-,-\}$ restricts to a Poisson bracket on $\cO(S)$ (as well
as to the standard symplectic structure on $\cO(\Delta^{\dim Y -
  2})$).

However, it is unfortunately not quite true that
the projection $X_{\bC^\times \cdot y} \to Z \times \Delta^{\dim Y -
  2}$ is Poisson.  In terms of the standard coordinates
$t,u,z_1,\ldots,z_{\dim Y - 2}$, the Poisson brackets are all those of
the standard symplectic structure \eqref{e:ss-deg-k} except for those
of $u$. To fix this, we need to change the coordinate $u$ so that
$\{u,z_i'\} = 0$ for all $i$, with $z_i'$ defined just after
\eqref{e:ss-deg-k}, and also so that $\{t,u\}=t^{1-k}$. 

We explicitly construct in this paragraph a change of coordinates $u
\mapsto u'$ (fixing the other coordinates) so that $\{u',z_i'\}=0$ for
all $i$; an alternative (but nonexplicit) construction using
\cite[Lemma 3.2]{Kalss} is given in the next paragraph.  Let us use
the notation $\int f dz_j$ for the antiderivative of $f \in
\bC[t,t^{-1}][\![u,z_1,\ldots,z_{\dim Y - 2}]\!]$ which is a multiple
of $z_j$, which we extend $\cO(S)$-linearly to $\hat \cO_{X,\bC^\times
  \cdot y}$.
%  Furthermore,
% let $\int f dz_j'$ equal $\int f dz_j$ when $j$ is even and $t^k \int
% f dz_j$ when $j$ is odd (i.e., $\int f dz_j' = (z_j'/z_j) \int f
% dz_j$).  
Let $\tilde J$ be the ideal of $\bC^\times \cdot y$, i.e., $\tilde J$
is generated by $u,z_1,\ldots,z_{\dim Y - 2}$, and the augmentation
ideal of $\cO(S)$.  We then have that $\{u,z_i'\} \in \tilde J$ for
all $i$ by construction (since it is homogeneous and
$\{u,z_i'\}(y)=0$).  Suppose inductively that $\{u,z_i'\} \in (\tilde
J)^m$ for some $m \geq 1$.  Then if $i$ is odd, we can make the change
of coordinates $u \mapsto u':=u + \int \{u,z_i'\}dz_{i+1}$, which is
congruent to $u$ modulo $(\tilde J)^{m+2}$, and satisfies $\{u',z_i'\}
\in (\tilde J)^{m+1}$. Moreover, if we had in fact $\{u,z_j'\}=0$ for
some $j$, then we claim that still $\{u',z_j'\}=0$:
\[
\{u',z_j'\}=\{\int \{u,z_i'\} dz_{i+1}, z_j'\} = \int \{u, z_j'\}
dz_{i+1} = 0,
\]
where for the second equality we use that $\{\int f dz_{i+1}, z_j'\} =
\int \{f, z_j'\} dz_{i+1}$ for all $f$, which follows since
$\{z_{i+1}, z_j'\}$ is a power of $t$.  Thus, iteratively performing
the change of coordinates $u \mapsto u'$ we get a convergent change of
coordinates after which $u$ commutes with $z_i'$ without affecting the
property of commuting with other $z_j'$.  We can do the same procedure
as above if $i$ is even, except making the change of coordinates $u
\mapsto u' := u - t^k \int \{u,z_i'\} dz_{i-1}$ (note $z_i'=z_i$ in
this case).  Thus we have explicitly shown how to change coordinates
so that $u$ commutes with $z_i'$ for all $i$.

Alternatively, we could have again appealed to \cite[Lemma
3.2]{Kalss}: consider again $\hat \cO_{X,\bC^\times \cdot
  y}^{\bC^\times}$ filtered by powers of $J$, viewed now as a
$\bC[\![z_1,\ldots,z_{\dim Y - 2}]\!]$-module. Define the operators
$\nabla_{z_i} := \{z_i',-\}$ for $i$ odd and $\nabla_{z_i} := t^k
\{z_i,-\}=t^k\{z_i',-\}$ for $i$ even. This defines a connection since
$\nabla_{z_i}(z_j f) = z_j \nabla_{z_i}(f) + \delta_{ij} f$ and it is
flat since the $\nabla_{z_i}$ commute.  Therefore \cite[Lemma
3.2]{Kalss} implies that $\hat \cO_{X,\bC^\times \cdot
  y}^{\bC^\times} \cong \bC[\![z_1,\ldots,z_{\dim Y - 2}]\!] \hat
\otimes (\hat \cO_{X,\bC^\times \cdot y}^{\bC^\times})^\nabla$, and
the latter factor is the centralizer of the $z_i'$.  There
must be an element of the latter factor, call it $u'$, such that
$\{t,u'\}$ does not vanish along $\bC^\times \cdot y$ (as there is no
such element in the first factor). The change of coordinates $u
\mapsto u'$ will then have the desired property.

Finally, we change coordinates so that $\{t,u\} = t^{1-k}$, using an
explicit algorithm similar to the case $\{u,z_i'\}=0$.  Since $u$ and
$t$ now commute with $z_i'$ for all $i$, so must $\{t,u\}$, so that
\[
\{t,u\} \in t^{1-k}(1 + (\bC[\![u]\!] \hat \otimes \cO(S))_+),
\]
with the subscript of $+$ denoting the augmentation ideal.  Suppose
inductively that $\{t,u\} - t^{1-k} \in (\bC[\![u]\!] \hat \otimes
\cO(S)_+)^m$ for some $m \geq 1$ (which is guaranteed for $m=1$).
Then, we perform the change of coordinates $u \mapsto u'=u+\int
t^{k-1} (1-\{t,u\}) du$, which will have the property that $\{t,u'\} -
t^{1-k} \in (\bC[\![u]\!] \hat \otimes \cO(S)_+)^{m+1}$, completing
the induction step.  Note that these changes of coordinates will not
affect the property $\{u,z_i'\}=0$. 

After this, we have that the projection $\hat X_{\bC^\times \cdot y}
\to Z \times \Delta^{\dim Y -2}$ is Poisson, as desired.  To conclude,
note that the composition $\hat Y_{\bC^\times \cdot y} \to \hat
X_{\bC^\times \cdot y} \to Z \times \Delta^{\dim Y - 2}$ of the
inclusion with the projection to the first two factors is
$\bC^\times$-equivariant, sends $y$ to $((1,0),0) \in Z \times
\Delta^{\dim Y-2}$ and is an isomorphism on the tangent space. By
equivariance, it is an isomorphism at the tangent spaces of all closed
points and a bijection of closed points, hence it is an
isomorphism. As it is a composition of Poisson morphisms, this
isomorphism is Poisson.  We can therefore compose with the inverse of
this isomorphism to view our decomposition as $\hat X_{\bC^\times
  \cdot y} \cong \hat Y_{\bC^\times \cdot y} \times S$, with the first
projection Poisson. Finally, the Poisson structure
$\{-,-\}'=t^k\{-,-\}$ on $X'$ induces a Poisson structure on $S$ via
the projection $X' \to S$, also given by the same formula.

We now prove (iii). By part (ii), we can write the Poisson bivector on
$\hat X_{\bC^\times \cdot y} \cong \hat Y_{\bC^\times \cdot y} \times S$ as
\[
\pi_{\hat X_{\bC^\times \cdot y}} = \pi_{\hat Y_{\bC^\times \cdot y}} + t^k \pi_S + \pi',
\]
where $\pi'$ is a bivector which projects to zero on each factor. As
in (ii), set $Z = \bC^\times \times \Delta$ and write $\hat
Y_{\bC^\times \cdot y} \cong Z \times \Delta^{\dim Y - 2}$ with the
standard symplectic structure \eqref{e:ss-deg-k}.  By construction,
$\{t,\cO(S)\}$ and $\{\cO(\Delta^{\dim Y - 2}),\cO(S)\}$ are zero.  Therefore
$\pi'$ has the form $\pi' = t^{-k} \partial_u \wedge \xi$ where
$\xi(\hat \cO_{Y,\bC^\times \cdot y})=0$, that is, $\xi$ is constant
in the $\hat Y_{\bC^\times \cdot y}$ direction, and $\xi$ has degree
zero. To conclude, it suffices to show that $\xi$ is actually parallel
to $S$, i.e., $\xi(\cO(S)) \subseteq \cO(S)$.  Note that $\xi = t^{k}
\xi_u$ where $\xi_u$ is the Hamiltonian vector field of $u$.  But,
$\cO(S)$ is the subalgebra of degree-zero elements commuting with the
coordinate functions $t, z_1, \ldots, z_{\dim Y - 2}$ on $\hat
Y_{\bC^\times \cdot y}$.  To show this is preserved by $\xi$, it
suffices to show that the subalgebra $\bC[t,t^{-1}] \hat \otimes
\cO(S)$ of all elements commuting with $t,z_1,\ldots,z_{\dim Y - 2}$
is preserved by $\xi_u$. This follows since
$\xi_{t}(f)=0=\xi_{z_i}(f)=0$ for all $i$, then $\xi_t(\xi_u(f))=
[\xi_t,\xi_u](f)=\xi_{\{t,u\}}(f) = \xi_{t^{-k-1}}(f)=0$ and
$\xi_{z_i}(\xi_u(f))=[\xi_{z_i},\xi_u](f)=\xi_{\{z_i,u\}}(f)$, which
again is zero since $\{z_i,u\}$ is either zero (if $i$ is even) or
$-kt^{-k}z_i$ (if $i$ is odd). Thus, we obtain \eqref{e:hx-pbv} with
$\xi$ a vector field on $S$.

It remains to verify that $[\xi,\pi_S]=k\pi_S$.  Since $\xi$ is
Hamiltonian on $\hat X_{\bC^\times \cdot y}$, it follows that
$[\xi,\pi]=0$ where $\pi$ is the Poisson bivector on $\hat
X_{\bC^\times \cdot y}$. Since $\pi_S$ is the restriction of $t^k \pi$
to $S$ and $\xi$ is parallel to $S$, we obtain $[\xi,\pi_S] = \xi(t^k)
\pi|_S = kt^k\pi|_S = k \pi_S$.

% For the second paragraph, we proceed as in the proof of Theorem \ref{t:main-sl}.(ii). Note that the centralizer of $t, z_1,
% \ldots, z_{\dim Y -2}$ is a Poisson algebra $\cO_T$ with $t$ a central
% invertible homogeneous coordinate of degree $\ell$, but $T$ need not
% decompose as a product $S \times \bC^\times$. 
% %%%%%ADD MORE
% As in Theorem \ref{t:main-sl}.(ii), we can 
% The above proof then shows that we have a $\bC^\times$-isomorphism
% $\hat X_{\bC^\times \cdot y} \cong \Delta^{\dim Y - 1} \times T$, with
% $\cO(\Delta^{\dim Y - 1}) = \bC[\![u,z_1,\ldots,z_{\dim Y - 2}]\!]$ in
% the above coordinates.  We may rewrite the RHS as $(\Delta^{\dim Y -
%   1} \times \bC^\times) \times_{\bC^\times} T$, reintroducing the $t$
% coordinate into the first factor, yielding \eqref{e:prod-t}. Then, by
% construction, we get a Poisson bivector of the form
% \eqref{e:hx-pbv-t}. Note that now $\hat X_{\bC^\times \cdot y} \to T$
% is an honest Poisson projection, since $\cO(T)$ is closed under the
% Poisson bracket without having to rescale by a power of $t$.
% %%%%%ADD MORE
% For this reason, $\xi=\xi_u$ is now Poisson (in other words, $\xi$ is
% Hamiltonian on $\hat X_{\bC^\times \cdot y}$, so it restricts to a
% Poisson derivation on the Poisson subalgebra $\cO(T) \subseteq \hat
% \cO(X)_{\bC^\times \cdot y}$).

\subsection{Proof of Theorem \ref{t:conv}}
Fix $m$ and $k$, and suppose we are given $(S, \pi_S, \xi)$ and $(S',
\pi_{S'}, \xi')$ satisfying $[\xi,\pi_S] = k \pi_S$ and
$[\xi',\pi_{S'}] = k \pi_{S'}$. Suppose that we have a 
 $\bC^\times$-Poisson isomorphism
\[
\Phi: \bC^\times \times \Delta^{2m-1} \times S \to
\bC^\times \times \Delta^{2m-1} \times S',
\]
equipping both with the Poisson bivectors \eqref{e:hx-pbv} and the
$\bC^\times$-actions by acting on the first factor.  Let $t,u,z_i$ and
$t', u', z_i'$ be the corresponding coordinates on each satisfying
$\{z_{2i-1},z_{2i}\}=1$ (so, the coordinates which were denoted $z_i'$
before, but we have to change notation here due to the primes in the
target of $\Phi$).

First, we claim that we can precompose with an automorphism of the source
such that $\Phi^*(t')=t$.  Write $\Phi^*(t')=tf$ for some degree-zero
function $f$ which is nonvanishing at the ideal $\tilde J$ of the
punctured line $L := \bC^\times \times \{0\} \times \{0\}$.
Therefore $f$ is invertible.  Up to composing with an isomorphism $t
\mapsto \lambda t, u \mapsto \lambda^{-1} u$ for some $\lambda \in
\bC^\times$ (which is the identity on $z_i$ and on $S$) we can
assume that $f \in 1 + \tilde J$.  Set $f_1 := f$. We prove by
induction that there is a sequence $(f_1,f_2,\ldots)$ of elements $f_q
\in 1 + (\tilde J)^q$ and a sequence of  $\bC^\times$-Poisson automorphisms
$(\Psi_1,\Psi_2,\ldots)$ such that $\Psi_q^*$ is the identity modulo
$(\tilde J)^{q}$ and takes $tf_q$ to $tf_{q+1}$. Then the
composition $\cdots \Psi_2^* \Psi_1^* \Phi^*$ converges to a 
graded continuous Poisson algebra isomorphism taking $t'$ to $t$, proving the
claim.

Since continuous Poisson algebra automorphisms which are the identity modulo $(\tilde
J)^{q}$ are all exponentials of Poisson vector fields which are zero
modulo $(\tilde J)^{q}$, the statement is equivalent to finding
Poisson vector fields $\xi_q$ which are zero modulo $(\tilde J)^{q}$
such that
\[
\xi(tf_q) + tf_q \equiv t \pmod{(\tilde J)^{q+1}}.
\]
In fact, we have Hamiltonian vector fields with this property, by
the identity
\[
\{t^k\int (f_q-1) du, tf_q\} \equiv t(1-f_q)f_q \equiv t - t f_q
\pmod{(\tilde J)^{q+1}},
\]
where $\int (f_q-1) du$ is the unique antiderivative of $(f_q-1)$ with
respect to $u$ which is a multiple of $u$ (note that
$\int(f_q-1)du \in (\tilde J)^{q+1}$.)

Next, we may suppose that the Poisson bivectors of $S$ and $S'$ vanish
at the origin; otherwise we could decompose $S$ and $S'$ as products
of $\Delta^{2r}$ by some formal Poisson schemes with Poisson bivectors
vanishing at the origin, where $2r$ is the rank of the Poisson
bivectors of $S$ and $S'$ at the origin (which must be equal since the
ranks of the Poisson bivectors on the products $\bC^\times \times
\Delta^{2m-1} \times S$ and $\bC^\times \times \Delta^{2m-1}
\times S$ are $2r+2m$).  In this case $\Phi$ must preserve the
tangent spaces to $\bC^\times \times \Delta^{2m-1}$ since these
tangent spaces are the spans of the Hamiltonian vector fields.  Up to
change of coordinates, we can assume that $\Phi$ acts as the identity
on this tangent space (by the proof of Theorem \ref{t:e-db}).  

A standard argument used to show uniqueness of the slices in the usual
formal Darboux-Weinstein decomposition shows that we can apply
Hamiltonian isomorphisms so that the coordinates $z_1',\ldots,z_{2m-2}'$
map to $z_1,\ldots,z_{2m-2}$.  
%%This is similar to Darboux theorem...
Namely, suppose that $\Phi^*(z_k') - z_k \in (\tilde J)^{q+1}$ 
 for $k < 2i-1$ and $\Phi^*(z_k') - z_k \in (\tilde J)^q$ for $k \geq 2i-1$.
Then write
$f:= \Phi^*(z_{2i-1}') - z_{2i-1} \in (\tilde J)^q$. For $k < 2i-1$,
we have
\[
\{f,z_k\} =\{\Phi^*(z_{2i-1}),z_k\} = \{\Phi^*(z_{2i-1}), \Phi^*(z_k)-z_k\}
\in (\tilde J)^q.
\]
%%%%%Do we have enough details here?
Then, we can apply the Hamiltonian isomorphism $\exp(\xi_{\int f dz_{2i}})$,
which is the identity modulo
$(\tilde J)^q$, leaves $\Phi^*(z_k')$ unchanged modulo $(\tilde J)^{q+1}$ for
$k < 2i-1$, and takes $\Phi^*(z_{2i-1})$ to an element equivalent to $z_{2i-1}$
modulo $(\tilde J)^{q+1}$.
Similarly, if then
 $\Phi^*(z_{2i}) = z_{2i}+g$ for $g \in (\tilde J)^q$,
% in the $q$-th power of
%the augmentation ideal (and $\{g,z_{2i-1}\}$ is also in the $q$-th power
%of the augmentation ideal), 
 we can then apply $\exp(\xi_{-\int g dz_{2i-1}})$. After this, we
 will have that $\Phi^*(z_k') - z_k \in (\tilde J)^{q+1}$ for $k <
 2i+1$ and $\Phi^*(z_k')-z_k \in (\tilde J)^q$ for $k \geq 2i+1$,
 completing the induction. Moreover, since $\Phi(t')=t$, we
 had $\{\Phi(z_i'), t\} = 0$, which shows that the above procedure
 will preserve $t$.

We may therefore assume that $\Phi^*(t')=t$ and $\Phi^*(z_i')=z_i$ for
all $i$.  Thus $\Phi^*(u') = u + f$ for some $f$ in the augmentation
ideal of $\cO_{S}$.  It follows that $\cO_{S'}$, which is the
weight-zero part of the centralizer of $t', z_1', \ldots, z_{2m-2}'$,
must map to $\cO_{S}$. Thus $\Phi$ produces a Poisson isomorphism $S
\to S'$, as desired.

Finally if we compose with a continuous graded Poisson algebra
automorphism fixing $t, z_1, \ldots, z_{2m}$ and sending $u$ to $u+f$
(for $f$ in the augmentation ideal), we see that $f$ must actually be
in the augmentation ideal of $\cO_S$. That is, this changes $\xi$
appearing in \eqref{e:hx-pbv} to $\xi + t^{-k}\xi_f$, i.e., $\xi +
\xi'_f$ where $\xi'_f$ is the Hamiltonian vector field on $(S,\pi_S)$
associated to $f$.  Conversely given any $f$, we can apply the
automorphism which is the identity on $t, z_i$, and $\cO_S$, sending
$u$ to $u+f$, which is Poisson from the bivector in \eqref{e:hx-pbv}
to the one replacing $\xi$ with $\xi+\xi'_f$.

% For the more general statement, the same argument applies, setting
% $S = T/\!/\bC^\times$ (i.e., $\cO_T = \cO_S^{\bC^\times}$).

\subsection{Proof of Theorem \ref{t:main-sr}}
For most of the proof, we follow (and generalize) the proof of
Proposition \cite[Proposition 5.5]{PS-pdrhhvnc}.

We first show that the hypothesis of (i) implies (ii). In the more
general situation where $X$ is a symplectic singularity, this is
actually a consequence of \cite{Xu-fafg}, as pointed out in
\cite{Nam-fgss}: there it is shown that, for a small enough neighborhood
$U$ of $y$, the algebraic fundamental group $\hat \pi_1(U^\circ)$ of
the smooth locus $U^\circ$ of $U$ is finite.  This in particular
implies that the fundamental group $\pi_1(U^\circ)$ cannot have $\bZ$
as a quotient, so that $H_1(U^\circ)$ is torsion.  Thus,
$H_1(U^\circ,\bC) = 0$, and hence $H^1(U^\circ, \bC)=0$, as desired.

However, we give a more direct proof (using only \cite[Theorem
2.12]{Kalss}) in the case that $X$ has a (projective) symplectic
resolution. Note that, by \cite[Theorem 2.12]{Kalss}, if $\rho: \tilde
X \to X$ is a symplectic resolution, then $H^1(\rho^{-1}(y),\bC)=0$.
Take some tubular analytic neighborhood of $\rho^{-1}(y)$ which
contracts to $\rho^{-1}(y)$.  Since $\rho^{-1}(y)$ is compact, there
is some open ball $U$ around $y$ such that $\rho^{-1}(U)$ lands in the
tubular neighborhood and therefore contracts to $\rho^{-1}(y)$.  Thus
$H^1(\rho^{-1}(U),\bC)=0$.

Let $U^\circ$ be the smooth locus of $U$, and write $U\setminus
U^\circ = \bigcup_i U_i$ for some closed subsets $U_i \subseteq U$.
Since $\rho$ is an isomorphism over $U^\circ$, it suffices to show
that $H^1(\rho^{-1}(U^\circ),\bC)$.  By the semismallness property
\cite[Lemma 2.11]{Kalss} of $\rho$, the complex codimension of
$\rho^{-1}(U_i)$ is at least half the codimension of $U_i$.  Thus, the
real codimension of $\rho^{-1}(U_i)$ is at least equal to the complex
codimension of $U_i$. Since the fundamental group of a smooth manifold
is unchanged by removing a locus of real codimension greater than two
(as all homotopies can be pushed away from the removed locus), we
conclude that the fundamental group of $\rho^{-1}(U^\circ)$ equals
that of $\rho^{-1}(U) \setminus \sqcup_{\dim_{\bC} U_i = \dim_{\bC} U
  - 2} \rho^{-1}(U_i)$.  Moreover, since we can homotope any path in
$U$ away from a locus of real codimension two, the map
$\pi_1(\rho^{-1}(U^{\circ})) \to \pi_1(\rho^{-1}(U))$ is surjective.
For each $U_i$ of complex codimension two, the singularity at each $y
\in U_i$ is of Kleinian type.  The fundamental group of the smooth
locus of a Kleinian singularity is finite (it is the corresponding
group $\Gamma < \operatorname{SL}(2,\bC)$).  Therefore, the kernel of
$\pi_1(\rho^{-1}(U^{\circ})) \to \pi_1(\rho^{-1}(U))$ is generated by
torsion elements. Thus, $H_1(\rho^{-1}(U), \bZ)$ is a quotient of
$H_1(\rho^{-1}(U^{\circ}),\bZ)$ by torsion, and
$H_1(\rho^{-1}(U^{\circ}), \bC) \to H_1(\rho^{-1}(U), \bC)$ is an
isomorphism.  Since $H_1(\rho^{-1}(U),\bC) = 0$, also
$H_1(\rho^{-1}(U^{\circ}),\bC)=0$, and hence
$H^1(\rho^{-1}(U^{\circ}),\bC)=0$.  Thus $H^1(U^{\circ},\bC)=0$, as
desired.

Next, we explain why part (ii) implies (iii). For this we can assume
$S$ comes from an ordinary Darboux-Weinstein decomposition; otherwise
completing at $y$, we still get a product of formal schemes $\hat X_y
\cong \hat Y_y \times S$ (it is just not necessarily Poisson),
and this is all we will use.  Assume (ii). Let $\rho: V \to U$ be an
arbitrary resolution of singularities (not necessarily symplectic).
Assume $U$ is small enough that $V$ contracts to $\rho^{-1}(y)$.  Let
$Z \subseteq U$ be the singular locus, so $U^\circ = U \setminus Z$.
Let $\bH_{DR}^\bullet$ denote the de Rham hypercohomology.  By
Hartshorne's theorem \cite{Har-adrc,Har-drcav}, $\bH^\bullet_{DR}(\hat
V_{\rho^{-1}(y)}) \cong H^\bullet(\rho^{-1}(U)) \cong
H^\bullet(\rho^{-1}(y))$, the topological cohomology.  Applying the
Mayer-Vietoris sequence (cf.~\cite[(4.40)--(4.44)]{ES-dmlv}), we find
that $\bH^\bullet_{DR}(\hat V_{\rho^{-1}(y)} \setminus \rho^{-1}(Z))
\cong H^\bullet(\rho^{-1}(U) \setminus \rho^{-1}(Z)) \cong
H^{\bullet}(U^\circ)$.  On the other hand, $\hat V_{\rho^{-1}(y)}
\setminus \rho^{-1}(Z) = \hat U_y \setminus Z$, since $\rho$ is an
isomorphism over $U^{\circ} = U \setminus Z$.  Thus we conclude that
$\bH^\bullet_{DR}(\hat U_y \setminus Z) \cong H^\bullet(U^{\circ})$.
Now, $\tilde \Omega^\bullet_{\hat U_y}$ is the global sections of the
algebraic de Rham complex on $\hat U_y \setminus Z$ (since $U
\setminus Z$ is the smooth locus of $U$, $U$ is normal, and $Z$ has
codimension at least two).  Therefore, by the spectral sequence
computing hypercohomology, $H^1(\tilde \Omega^\bullet_{\hat U_y})$ is a
summand of $\bH^1_{DR}(\hat U_y \setminus Z) = H^1(U^{\circ})$
(alternatively, we can see that the canonical map $H^1(\tilde
\Omega^\bullet_{\hat U_y}) \to H^1(U^{\circ})$ is injective since any
algebraic one-form which is the differential of a smooth function is
actually the differential of an algebraic function, and the same
applies after taking formal completions).  By the hypothesis, the
latter is zero, establishing $H^1(\tilde \Omega^\bullet_{\hat U_y}) =
0$. Since $\hat U_y \cong \Delta^{\dim Y} \times S$, we also get
$H^1(\tilde \Omega^\bullet_{S}) = 0$, as desired.

Finally, we explain why part (iii) implies (iv).  Suppose $H^1(\tilde
\Omega^\bullet_{S}) = 0$ and $X$ (hence $S$) is generically
symplectic.  If $\xi$ is a Poisson vector field on $S$, i.e., $[\xi,
\pi] = 0$ for $\pi$ the Poisson structure, then for $\omega =
\pi^{-1}$ the generic symplectic structure, $i_{\xi} \omega$ is a
closed one-form on the smooth locus of $S$, and it is exact if and
only if it is Hamiltonian.

% For the last statement, the same argument shows (ii) implies (iii').
% Assuming (iii'), given any degree-zero Poisson vector field $\xi$ on
% $T$ annihilating $t$, we obtain a Poisson vector field on
% $S=T/\!/\bC^\times$, which is generically symplectic.  Thus we get a
% closed one-form $i_{\xi} \omega$ on $S$, which is by restriction a
% closed one-form on $T$. This is exact, so $i_\xi \omega = df$ for
% some $f \in \cO_T$.  Since $f$ has degree zero, $f \in \cO_S =
% \cO_T^{\bC^\times}$.  Thus the restriction of $\xi$ to $S$ equals
% $\xi_f$. Since $T$ is irreducible, $\xi = \xi_f$ as in the argument
% at the end of the proof of Corollary \ref{c:ham-pbv-isom}.

\subsection{Quantization}\label{ss:quant}
Parallel to Theorem \ref{t:q-unique}, Theorem \ref{t:main-sl} has the
following consequence. Let $\caD_{\hbar,k}(\bC^\times \times
\Delta^{n - 1})$ denote the completion of \eqref{e:gr-quant} with
respect to the ideal $(\hbar,u,z_1',\ldots,z_{2n-2}')$, i.e., the
quantization of $\bC^\times \times \Delta^{2n-1}$ discussed in \S
\ref{ss:quant1}.
\begin{theorem}\label{t:quant}
  Let $X, Y, y$, and $k$ be as in Theorem \ref{t:main-sl}. Then, every $\bC^\times$-compatible quantization $A_\hbar$
  of $\hat X_{\bC^\times \cdot y}$ has subalgebras
  $\caD_{\hbar,k}(\bC^\times \times \Delta^{(\dim Y)/2 - 1})$ and
  $A_\hbar'$ quantizing $\hat Y_{\bC^\times \cdot y}$ and $\bC^\times
  \times S$, respectively, so that $A_\hbar'$ is the simultaneous
  centralizer of $t,z_1,\ldots, z_{\dim Y - 2}$. We have
\begin{equation}\label{e:q-decomp}
  A_\hbar \cong \caD_{\hbar,k}(\bC^\times \times \Delta^{(\dim Y)/2 - 1}) \hat \otimes_{\bC[t,t^{-1}][\![\hbar]\!]} A_\hbar'
\end{equation}
as graded topological $\bC[\![\hbar]\!]$-modules.

In the case that all Poisson vector fields on $S$ are Hamiltonian (or
any of the conditions of Theorem \ref{t:main-sr} are satisfied), then
we can choose the coordinate $u$ so that \eqref{e:q-decomp} is a
graded algebra isomorphism. If, further, the conditions of Corollary
\ref{c:ct-act} are satisfied, i.e., $S$ also admits a $\bC^\times$
action giving its Poisson structure degree $-k$, and this action
quantizes to a compatible action on $A_\hbar'$ (now acting trivially
on $\bC[t,t^{-1}] \subseteq A_\hbar'$), then we can replace the tensor
product above with one over $\bC[\![\hbar]\!]$: for $A_\hbar''
\subseteq A_\hbar'$ the subspace where the two $\bC^\times$ actions
have the same weight,
\begin{equation}\label{e:q-decomp2}
  A_\hbar \cong \caD_{\hbar,k}(\bC^\times \times \Delta^{(\dim Y)/2 - 1}) \hat \otimes_{\bC[\![\hbar]\!]} A_\hbar'',
\end{equation}
as graded algebras.
\end{theorem}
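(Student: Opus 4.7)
The plan is to run the argument of Theorem \ref{t:main-sl} one order of $\hbar$ at a time, using Theorem \ref{t:q-unique} as the local model in each step. Fix classical coordinates $t,u,z_1,\ldots,z_{\dim Y-2}$ on $\hat X_{\bC^\times\cdot y}$ as in the proof of Theorem \ref{t:main-sl}(2)--(3), so that the Poisson bivector has the form \eqref{e:hx-pbv} and $\cO_S\subseteq \hat\cO_{X,\bC^\times\cdot y}$ is the degree-zero centralizer of $t,z_1,\ldots,z_{\dim Y-2}$. Choose homogeneous lifts $\tilde t,\tilde u,\tilde z_1,\ldots,\tilde z_{\dim Y-2}\in A_\hbar$ of these coordinates, using that $A_\hbar$ is $\bC^\times$-compatible; invertibility of $\tilde t$ follows by passing mod $\hbar$.

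Next I would modify these lifts by elements of $\hbar A_\hbar$ so that the commutators match those coming from $\pi_{\dim Y,-k}$ exactly: $[\tilde t,\tilde u]=\hbar\tilde t^{1-k}$, $[\tilde z_{2i-1}',\tilde z_{2i}]=\hbar$ (with $\tilde z_{2i-1}':=\tilde t^k\tilde z_{2i-1}$), and all other commutators among $\tilde t,\tilde z_i$ vanishing. This is the $\hbar$-adic induction that appears in the proof of Theorem \ref{t:q-unique}: at each order, the obstruction to fixing a commutator is a cocycle which is cleared by adding an appropriate element of $\hbar^n \cO(X)$, exactly as in Remark \ref{r:darboux} applied in turn to $\tilde t$ and to each $\tilde z_i$. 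The subalgebra generated by the resulting $\tilde t,\tilde u,\tilde z_i$ and $\hbar$ is then, by the uniqueness part of Theorem \ref{t:q-unique}, identified with $\caD_{\hbar,k}(\bC^\times\hat\times\Delta^{(\dim Y)/2-1})$. Define $A_\hbar'$ as the simultaneous centralizer of $\tilde t,\tilde z_1,\ldots,\tilde z_{\dim Y-2}$; its classical limit is the Poisson centralizer of $t,z_1,\ldots,z_{\dim Y-2}$, namely $\bC[t,t^{-1}]\hat\otimes \cO_S$, so $A_\hbar'$ is a $\bC^\times$-compatible quantization of $\bC^\times\hat\times S$ containing the central subalgebra $\bC[t,t^{-1}][\![\hbar]\!]$. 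The module decomposition \eqref{e:q-decomp} then follows by lifting the classical product decomposition $\hat\cO_{X,\bC^\times\cdot y}\cong \cO(\hat Y_{\bC^\times\cdot y})\hat\otimes_{\bC[t,t^{-1}]}\cO(\bC^\times\hat\times S)$ (Theorem \ref{t:main-sl}(2)) one $\hbar$-order at a time, using the standard fact that a continuous $\bC[\![\hbar]\!]$-linear map of complete flat $\bC[\![\hbar]\!]$-modules is an isomorphism when it is so modulo $\hbar$.

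For the algebra-isomorphism refinement, I would quantize the classical adjustment of $u$ made in the proof of Theorem \ref{t:main-sl}(3). Classically, when all Poisson vector fields on $S$ are Hamiltonian, one can alter $u$ so that the vector field $\xi$ in \eqref{e:hx-pbv} is parallel to $S$, equivalently, $\{u,\cO_S\}\subseteq\cO_S$; one then has $[\cO_S,\cO_S]\subseteq \cO_S$ for the Poisson bracket, giving a Poisson subalgebra realizing the tensor factor. Quantumly I would inductively modify $\tilde u$ by elements of $\hbar A_\hbar$ so that $[\tilde u,A_\hbar']\subseteq A_\hbar'$. At each order, the failure is a derivation of the classical limit valued in $\cO(\bC^\times\hat\times S)$ modulo the next order, and after removing its $\bC[t,t^{-1}]$-component, the remainder is a Poisson vector field on $S$, hence Hamiltonian by hypothesis, hence cleared by a further adjustment of $\tilde u$. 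Once this is achieved, $\caD_{\hbar,k}$ and $A_\hbar'$ together generate $A_\hbar$ with the only interaction being through $\bC[t,t^{-1}][\![\hbar]\!]$, so \eqref{e:q-decomp} is an algebra isomorphism. For \eqref{e:q-decomp2}, I would introduce the additional $\bC^\times$-action on $A_\hbar'$ and take $A_\hbar''$ to be the subspace on which the two actions agree, exactly as in Corollary \ref{c:ct-act}; the projection $A_\hbar'\cong \bC[t,t^{-1}][\![\hbar]\!]\hat\otimes A_\hbar''$ then reduces \eqref{e:q-decomp} to \eqref{e:q-decomp2}.

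The main obstacle will be the simultaneous $\hbar$-adic and $\tilde J$-adic bookkeeping in the algebra-isomorphism step: one must show that the obstruction at each order genuinely lies in the image of the Hamiltonian operator $\{f,-\}\mapsto f$, and that the resulting corrections to $\tilde u$ converge in the appropriate completion. This amounts to combining the Darboux-style induction of Remark \ref{r:darboux} with the classical Poisson-vs-Hamiltonian dichotomy that drives Theorem \ref{t:main-sr}, and verifying that both remain compatible with the $\bC^\times$-grading throughout. Once this is in place, the remaining assertions are formal consequences of uniqueness of the quantization on the first factor.
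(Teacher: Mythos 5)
Your proposal follows essentially the same route as the paper's proof: reduce to the model of Theorem \ref{t:main-sl} with bivector \eqref{e:hx-pbv}, lift $t,u,z_1',\ldots,z_{2n-2}'$ by the $\hbar$-adic Darboux-style induction of Theorem \ref{t:q-unique} (Remark \ref{r:darboux}) to get the $\caD_{\hbar,k}$-subalgebra, take $A_\hbar'$ to be the centralizer of $t,z_1',\ldots,z_{2n-2}'$ and identify it order by order in $\hbar$ with a quantization of $\bC^\times \hat\times S$ to obtain \eqref{e:q-decomp} as modules, then correct $u$ order by order using that Poisson vector fields on $S$ are Hamiltonian, and finally split off $t$ via the auxiliary grading to get $A_\hbar''$ and \eqref{e:q-decomp2}. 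The only imprecision is in your algebra-isomorphism step: what is needed (and what your inductive obstruction-clearing actually produces, as in the paper, where the obstruction $\hbar^{-N}t^{kN}\ad(u)$ lands in $A_\hbar'$ by the Jacobi identity and is then killed because it is Hamiltonian) is that $u$ \emph{commute} with $A_\hbar'$, not merely that $[\tilde u,A_\hbar']\subseteq A_\hbar'$ as you state.
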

% \begin{remark} 
%   If we don't assume $\ell=1$, then \eqref{e:q-decomp} still holds,
%   where now $A_\hbar'$ quantizes $T$ as in Theorem
%   \ref{t:main-sl}.(iv).  We see in the theorem that a quantization of
%   $X$ always yields a quantization, $A_\hbar'$, of $T$, which
%   indicates the naturality of $T$. In contrast, to get the
%   quantization $A_\hbar''$ of the slice $S$ in our
%   $\bC^\times$-equivariant context, we needed a further assumption
%   that not merely $S$ but also the quantization of $T$ admit a
%   $\bC^\times$ action.
% \end{remark}
Motivated by the additional hypothesis in the second paragraph of
Theorem \ref{t:quant}, we can ask the following quantum analogue of
Question \ref{q:slice}:
\begin{question}\label{q:q-slice}
  Suppose $X$ is conical and admits a $\bC^\times$-equivariant
  symplectic resolution with homogeneous symplectic form of degree $k$
  (or is a symplectic singularity and has a symplectic structure of
  degree $k$ on the smooth locus). Let $x \in X$ have trivial
  stabilizer under $\bC^\times$ and suppose that Question
  \ref{q:slice} has a positive answer, i.e., the $S$ in \eqref{e:ehat}
  admits a contracting $\bC^\times$ action for which the Poisson
  structure is homogeneous, and suppose it has degree $k$.  Then,
%\begin{enumerate}
does every $\bC^\times$-compatible quantization of $X$ contain
a subalgebra which quantizes $S$ and admits a compatible lift of the $\bC^\times$
action on $S$? 
%Does it arise from the construction as in the theorem above?
%\item Does this subalgebra arise from the above, via a compatible lift
%of the action on $T$?
%\end{enumerate}
\end{question}
We see in the next sections that this question has a positive answer,
at least, for certain quantizations of linear quotients and hypertoric
varieties.  Note, as pointed out after the statement of Question
\ref{q:slice}, that the degree $k$ must be positive in Question
\ref{q:q-slice}.
\begin{remark}\label{r:nam2}
  As in Remark \ref{r:nam}, one can also weaken the hypotheses of the
  question and require only that $X$ be normal and conical and have a
  symplectic form of positive degree $k$ on its smooth locus (and not
  require that $X$ be a symplectic singularity).
\end{remark}
\begin{proof}[Proof of Theorem \ref{t:quant}]
  By Theorem \ref{t:main-sl}, it is enough to replace $\hat
  X_{\bC^\times \cdot y}$ with $(\bC^\times \times \Delta^{2n-1})
  \times S$, with Poisson bivector \eqref{e:hx-pbv}.  We moreover view
  the quantization as an associative star product on $\cO(\bC^\times
  \times \Delta^{2n-1} \times S)[\![\hbar]\!]$. In this paragraph, we
  apply arguments similar to those in \S \ref{ss:without-moser} and in
  the proof of Theorem \ref{t:q-unique} in order to find a gauge
  transformation taking $t,u,z_1,\ldots,z_{2n-2}$ to coordinates
  satisfying the desired commutation relations; this will actually be
  surprisingly simple since the commutators are already correct modulo
  $\hbar^2$ (and this will produce a slightly different, more
  explicit, proof of Theorem \ref{t:q-unique}). We continue to use the
  $z_i'$ defined as before, $z_i' = z_i$ for $i$ even and $z_i' = t^k
  z_i$ for $i$ odd).  Let $[a,b]_\star := a \star b - b \star a$.  To
  begin, we know that commutators of these coordinates are correct
  modulo $\hbar^2$. Suppose that they are correct modulo $\hbar^m$ for
  some $m \geq 2$.  Then, we can apply coordinate changes $u \mapsto u
  - \hbar^{-1} t^{k-1} \int ([t,u]_\star-\hbar t^{1-k}) du, z_i
  \mapsto z_i - \hbar^{-1} t^{k-1} \int [t,z_i]_\star du$ for all $i$
  so that the commutators with $t$ are now correct modulo
  $\hbar^{m+1}$.  Similarly, for $i$ even and between $2$ and $2n-2$,
  we can apply coordinate changes $u \mapsto u - \hbar^{-1} t^k \int
  [u,z_i']_\star dz_{i-1}$ and for $i$ odd, $u \mapsto u + \hbar^{-1}
  \int [u,z_i']_\star dz_{i+1}$, so that the commutators with $u$ are
  correct modulo $\hbar^{m+1}$. Finally, with $j > i$, we can apply
  coordinate changes $z_j \mapsto z_j - \hbar^{-1} t^k \int
  [z_j,z_i]_\star dz_{i-1}$ for $i$ even, $z_j \mapsto z_j +
  \hbar^{-1} t^k \int [z_j,z_i]_\star dz_{i+1}$ when $i$ is odd and $j
  \neq i+1$, and finally $z_{2i} \mapsto z_{2i} + \hbar^{-1} t^k \int
  ([z_{2i-1},z_{2i}]_\star dz_{2i}$ for all $i$, after which all
  commutators among the $t,u,z_1, \ldots, z_{2n-2}$ will be correct
  modulo $\hbar^{m+1}$.

  We now assume that the $t,u,z_1,\ldots,z_{2n-2}$ have the correct
  commutation relations. They therefore generate the desired
  subalgebra $\caD_{\hbar,k}(\bC^\times \times \Delta^{(\dim Y)/2 -
    1})$ quantizing $\hat Y_{\bC^\times \cdot y}$.  We define
  $A_\hbar'$ as the subalgebra of elements commuting with $t, z_1,
  \ldots, z_{2n-2}$.  We need to show that \eqref{e:q-decomp} holds.
  To do this, we will show that there is a continuous graded
  $\bC[\![\hbar]\!]$-linear map $\Phi: \cO(S \times
  \bC^\times)[\![\hbar]\!] \to \cO(\bC^\times \times \Delta^{2n-1}
  \times S)[\![\hbar]\!]$, which is the inclusion modulo $\hbar$, such
  that the image of $\Phi$ is $A_\hbar'$. Taking the product of this
  with the inclusion we obtain a continuous $\bC[\![\hbar]\!]$-linear
  map $\cO(\bC^\times \times \Delta^{2n-1} \times S)[\![\hbar]\!] \to
  \cO(\bC^\times \times \Delta^{2n-1} \times S)[\![\hbar]\!]$ which is
  an isomorphism modulo $\hbar$, hence an isomorphism of graded topological
  $\bC[\![\hbar]\!]$-modules. It sends $\cO(\bC^\times \times
  \Delta^{2n-1})[\![\hbar]\!]$ to $\caD_{\hbar,k}(\bC^\times \times
  \Delta^{(\dim Y)/2 - 1})$ and $\cO(S \times
  \bC^\times)[\![\hbar]\!]$ to $A_\hbar'$. Thus, we will conclude
  \eqref{e:q-decomp}.

  We construct $\Phi$ order by order.  For every $m \geq 1$, let
  $(A_\hbar')_m \subseteq \cO(\bC^\times \times \Delta^{2n-1} \times
  S)[\hbar]/(\hbar^{m+1})$ be the subalgebra of elements annihilated
  by the operators $\hbar^{-1} \ad(t)$ and $\hbar^{-1} \ad(z_i)$ for
  all
  $i$. % It then suffices to prove that there exists $\Phi_m: \cO(S \times \bC^\times)[\hbar]/(\hbar^{m+1}) \to \cO(\bC^\times
%   \times \Delta^{2n-1} \times S)[\hbar](\hbar^{m+1})$, which is the inclusion
% modulo $\hbar$, whose image is $(A_\hbar')_m$. We can now prove this by induction with the trivial base case $m=0$.
% Assume therefore that,
Assume that, for some $m \geq 1$, there exists $\Phi_m: \cO(S
\times \bC^\times)[\hbar]/(\hbar^{m+1}) \to \cO(\bC^\times \times
\Delta^{2n-1} \times S)[\hbar]/(\hbar^{m+1})$ with image
$(A_\hbar')_m$, which is the identity modulo $\hbar$.  We must extend
this to a map $\Phi_{m+1}$ with image $(A_\hbar')_{m+1}$.  Note first
that the intersection $(A_\hbar')_{m+1} \cap \hbar^{m+1} 
\cO(\bC^\times \times \Delta^{2n-1} \times S)[\hbar]/(\hbar^{m+2})$ is
$\hbar^{m+1} \cdot \cO(\bC^\times \times S)$, since the condition for
a multiple of $\hbar^{m+1}$ to be annihilated by $\hbar^{-1}\ad(f)$
modulo $\hbar^{m+2}$ is just the condition that the element Poisson
commute with $f$, for all $f \in \cO(\bC^\times \times \Delta^{2n-1}
\times S)$, and we apply this to $f \in \{t,z_1,\ldots,z_{2n-2}\}$.  Therefore, the existence of $\Phi_{m+1}$ is equivalent to the statement that every element
$a \in (A_\hbar')_m$ has a lift $\tilde a \in (A_\hbar')_{m+1}$ to an element of
$(A_\hbar')_{m+1}$. This in turn is equivalent to the existence of a solution
$a_{m+1} \in  \bC(\bC^\times \times \Delta^{2n-1} \times S)$ to
the equations $\hbar^{-1}[t,\hbar^{m+1}a_{m+1}]_\star \equiv \hbar^{-1}[t,a]_\star \pmod {\hbar^{m+2}}$ and $\hbar^{-1}[z_i',\hbar^{m+1}a_{m+1}]_\star
\equiv \hbar^{-1}[z_i',a]_\star \pmod{\hbar^{m+2}}$ for all $i$.  

Let $\ad_\star(f)$ be the operator $\ad_\star(f)(g)=[f,g]_\star$.
Define the operators $\nabla_u := \hbar^{-1}t^{k-1}\ad_\star(t)$,
$\nabla_{z_i} := -\hbar^{-1} t^{k} \ad_\star(z_{i+1})$ for $i$ odd,
and $\nabla_{z_i} := \hbar^{-1} \ad_\star(z_{i-1}')$ for $i$ even.
These operators satisfy the identities $\nabla_u(u \star f)=u \star
\nabla_u(f) +f$ and $\nabla_{z_i}(z_i \star f)=z_i\star \nabla_{z_i}f
+ f$. Moreover, the operators commute, by the Jacobi identity for
commutators.  The above equations are equivalent to
\[
\nabla_u(a_{m+1}) \equiv \hbar^{-m-1}\nabla_u(a) \pmod{\hbar}, \nabla_{z_i}(a_{m+1}) \equiv \hbar^{-m-1}\nabla_{z_i}(a) \pmod{\hbar} \forall i.
\]
Modulo $\hbar$, the operators $\nabla_u, \nabla_{z_i}$  define a flat connection
on $\bC(\bC^\times \times
  \times \Delta^{2n-1} \times S)$
over $\bC[\![u,z_1,\ldots,z_{2n-1}]\!] = \bC(\Delta^{2n-1})$. Writing the above equations as $\nabla_u(a_{m+1}) \equiv F_u \pmod{\hbar}$ and $\nabla_{z_i}(a_{m+1})=F_{z_i}\pmod{\hbar}$, it follows from the previous identities that the integrability conditions $\nabla_{z_i} F_u = \nabla_u F_{z_i}$ and $\nabla_{z_i} F_{z_j} = \nabla_{z_j} F_{z_i}$ hold.  Therefore there exist solutions to the above equations.  This completes the proof of \eqref{e:q-decomp}.

  In the case that all Poisson vector fields on $S$ are Hamiltonian,
  we can inductively replace $u$ by a new coordinate function (which
  remains unchanged modulo $\hbar$), so that $u$ commutes with
  $A_\hbar'$ and continues to satisfy the needed commutation relations
  in $\caD_{\hbar,k}(\bC^\times \times \Delta^{(\dim Y)/2 -
    1})$. We begin, by Corollary \ref{c:ct-act}, knowing $[u,A_\hbar']
  \subseteq \hbar^2 A_\hbar$.  Assume $[u,A_\hbar'] \subseteq \hbar^N
  A_\hbar$ for some $N \geq 2$.  Then we can consider the vector field
  $\hbar^{-N} t^{kN} \ad(u)$ on $\cO_S$ valued in $A_\hbar$. In fact,
  it must be valued in the centralizer of $t,u,z_1', \ldots,
  z_{2n-2}'$, by the Jacobi identity, so it is valued in $A_\hbar'$.
  Then, it is Hamiltonian, so we can subtract an element of
  $\hbar^{N-1} A_\hbar'$ from $u$ so that $[u,A_\hbar']\subseteq
  \hbar^{N+1} A_\hbar$.

  Finally, if the assumptions of Corollary \ref{c:ct-act} are
  satisfied and $A_\hbar'$ has a $\bC^\times$-compatible action for
  the new grading (giving $t$ now degree zero), then it remains to
  note that $A_\hbar''$ as defined in the theorem is a graded
  subalgebra of $A_\hbar'$, and that $A_\hbar' = A_\hbar''[t,t^{-1}]$.
\end{proof}

\section{Finite linear quotients}\label{s:flq}
In this section we give more explicit and stronger versions of the
main theorem when $X$ is a finite quotient of a symplectic vector
space. In particular, we explain how one can replace formal
localization by an explicit \'etale (or Zariski) localization.

Let $V$ be a symplectic vector space and $\Gamma < \Sp(V)$ a finite
subgroup. We consider the quotient $X := V/\Gamma$.
% , with quotient map $q: V \onto X$.
The symplectic structure on $V$ makes $X$ a Poisson
variety.  Equip $V$ with the dilation action of $\bC^\times$, which makes
its symplectic form have weight $2$, and hence the Poisson structures
on $V$ and $X$ have weight $-2$.

Recall that a subgroup $\Gamma_0 < \Gamma$ is called \emph{parabolic}
if there exists $v \in V$ such that $\Stab_\Gamma(v) = \Gamma_0$. Let
$V^{\Gamma_0}$ be the fixed point set of $\Gamma_0$, $N(\Gamma_0) <
\Gamma$ be the normalizer, and $N(\Gamma_0)^0 :=
N(\Gamma_0)/\Gamma_0$, the residual action on $V^{\Gamma_0}$.  Then
one has the symplectic leaf $X_{\Gamma_0}:=(V^{\Gamma_0})^\circ /
N(\Gamma_0)^0$, where $(V^{\Gamma_0})^\circ := \{w \in V \mid
\Stab_\Gamma(w)=\Gamma_0\}$. Its closure is $\bar X_{\Gamma_0} =
V^{\Gamma_0} / N(\Gamma_0)^0$.  Moreover, all symplectic leaves $X_{\Gamma_0}$ are obtained in this way, and this
establishes a bijection between conjugacy classes of parabolic
subgroups of $\Gamma$ and  symplectic leaves (or symplectic leaf
closures) of $X$.

Let $\Gamma_0 < \Gamma$ be a parabolic subgroup. Fix $v \in
V^{\Gamma_0}$ with stabilizer equal to $\Gamma_0$, and let $\bar v \in
X_{\Gamma_0}$ be its image in $X$.  We assume (as in Theorem
\ref{t:pre-main}) that $\bC^\times$ acts freely on $\bar v$, i.e.,
that $v$ is not an eigenvector of any nontrivial element of
$N(\Gamma_0)^0$.\footnote{Note that such a $v \in V^{\Gamma_0}$ always
  exists unless $-\Id \in N(\Gamma_0)^0$, since no multiples of the
  identity other than $\pm \Id$ preserve the symplectic form on
  $V^{\Gamma_0}$. In other words, the $\ell$ appearing in Theorem \ref{t:main-sl}
  must be either two or one, and it is two if and only if $-\Id \in N(\Gamma_0)^0$.}
\begin{proposition}\label{p:linquot}
There is a canonical $\bC^\times$-equivariant Poisson isomorphism
\begin{equation}
\hat X_{\bC^\times \cdot \bar v} \cong
 (\widehat {X_{\Gamma_0}})_{\bC^\times \cdot \bar v} \times 
\widehat{(V^{\Gamma_0})^{\perp}}/\Gamma_0.
\end{equation}
\end{proposition}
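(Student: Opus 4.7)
Rather than invoking Theorem \ref{t:pre-main} abstractly, the plan is to construct the isomorphism directly from the linear structure. The three inputs are: the symplectic decomposition of $V$ as a $\Gamma_0$-representation, the identification of formal neighborhoods in $V/\Gamma$ with quotients of formal neighborhoods in $V$, and the fact that the freeness hypothesis on $\bar v$ forces the several punctured lines in the preimage to be pairwise disjoint so that these formal neighborhoods split as products.

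First I would record the symplectic splitting. Since $\Gamma_0$ acts symplectically and the symplectic form $\omega$ is $\Gamma_0$-invariant, $V^{\Gamma_0}$ is a symplectic subspace whose symplectic complement $U := (V^{\Gamma_0})^\perp$ is $\Gamma_0$-stable and itself symplectic, with $U^{\Gamma_0}=0$. Thus $V = V^{\Gamma_0}\oplus U$ as a symplectic $\Gamma_0$-representation, and the dilation action on $V$ restricts to a contracting $\bC^\times$-action on each summand giving both Poisson structures degree $-2$. In particular, $(V^{\Gamma_0})^{\perp}/\Gamma_0$ is a Poisson cone with vertex at the origin.

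Next I would identify $\hat X_{\bC^\times\cdot \bar v}$ with $\hat V_{\bC^\times\cdot v}/\Gamma_0$. The preimage in $V$ of $\bC^\times\cdot \bar v$ is $\Gamma\cdot(\bC^\times\cdot v)=\bigcup_{g\Gamma_0\in\Gamma/\Gamma_0}\bC^\times\cdot gv$. The key claim is that this is a \emph{disjoint} union: if $\bC^\times\cdot v\cap \bC^\times\cdot gv\ne\emptyset$ then $gv=tv$ for some $t\in\bC^\times$, so $g\Gamma_0g^{-1}=\Stab_\Gamma(gv)=\Stab_\Gamma(tv)=\Gamma_0$, i.e.\ $g\in N(\Gamma_0)$, and $g$ has $v$ as an eigenvector. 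The freeness hypothesis says no such $g\in N(\Gamma_0)\setminus\Gamma_0$ exists (and for $g\in\Gamma_0$ we have $gv=v$, forcing $t=1$). Therefore the formal neighborhood decomposes as a coproduct $\hat V_{\Gamma\cdot\bC^\times v}=\coprod_{g\Gamma_0}\hat V_{\bC^\times gv}$ on which $\Gamma$ acts by permuting components via $\Gamma/\Gamma_0$ and by $\Gamma_0$ on a chosen component. Taking the quotient (and using that formal completion commutes with finite quotients of affine pieces) yields the canonical $\bC^\times$-equivariant Poisson isomorphism
\[
\hat X_{\bC^\times\cdot\bar v}\;\cong\;\hat V_{\bC^\times\cdot v}/\Gamma_0.
\]

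Now the symplectic decomposition gives $\hat V_{\bC^\times\cdot v}\cong \widehat{V^{\Gamma_0}}_{\bC^\times\cdot v}\hat\times \widehat{U}_0$ as $\bC^\times$-Poisson formal schemes, and $\Gamma_0$ acts trivially on the first factor. Hence
\[
\hat V_{\bC^\times\cdot v}/\Gamma_0 \;\cong\; \widehat{V^{\Gamma_0}}_{\bC^\times\cdot v}\;\hat\times\; \widehat{U}/\Gamma_0.
\]
Finally I would identify $\widehat{V^{\Gamma_0}}_{\bC^\times\cdot v}$ with $(\widehat{X_{\Gamma_0}})_{\bC^\times\cdot\bar v}$ by exactly the same disjointness argument applied inside $(V^{\Gamma_0})^\circ\to X_{\Gamma_0}=(V^{\Gamma_0})^\circ/N(\Gamma_0)^0$: the preimage of $\bC^\times\cdot\bar v$ is $N(\Gamma_0)^0\cdot(\bC^\times\cdot v)$, which is a disjoint union of $|N(\Gamma_0)^0|$ punctured lines by the freeness hypothesis, and the quotient by $N(\Gamma_0)^0$ returns a single copy. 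Chaining the identifications gives the claimed isomorphism, manifestly $\bC^\times$-equivariant and Poisson, and canonical (two choices of lift $v$ of $\bar v$ differ by an element of $N(\Gamma_0)$, which produces the same formal neighborhood of $\bC^\times\cdot\bar v$).

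The only real obstacle is the disjointness step, and it is routine once the freeness hypothesis is translated into a statement about eigenvectors in $N(\Gamma_0)^0$. As a consistency check, the hypotheses of Theorem \ref{t:pre-main}(iv) are all verified in this setting ($V/\Gamma$ has symplectic singularities, and the slice $U/\Gamma_0$ is a Poisson cone with bracket of degree $-2$), so the abstract theorem predicts a decomposition of the form obtained here; the point of the direct argument is to exhibit the slice explicitly and to show the isomorphism is canonical.
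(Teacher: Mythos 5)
Your argument is correct and is essentially the paper's proof: both rest on the symplectic $\Gamma_0$-splitting $V=V^{\Gamma_0}\oplus (V^{\Gamma_0})^\perp$ together with the observation that the freeness of the $\bC^\times$-action on $\bar v$ forces $\bC^\times\cdot v$ to be disjoint from its translates under $N(\Gamma_0)$ (equivalently, under nontrivial elements of $N(\Gamma_0)^0$), which is exactly what makes $\widehat{V^{\Gamma_0}}_{\bC^\times\cdot v}\to(\widehat{X_{\Gamma_0}})_{\bC^\times\cdot \bar v}$ an isomorphism. The only cosmetic difference is that the paper packages the first step as the \'etale cover $V^\circ/\Gamma_0\to X^\circ$ (with $V^\circ$ the locus of points whose stabilizer lies in $\Gamma_0$) and splits there before completing, whereas you complete first and identify $\hat X_{\bC^\times\cdot\bar v}\cong \hat V_{\bC^\times\cdot v}/\Gamma_0$ directly via the disjointness of the $\Gamma$-translates of the punctured line; the content is the same.
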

This implies that Question \ref{q:slice} has an affirmative answer in this case.

The proposition follows easily from the following \'etale local statement.
Let $V^\circ := \{w \in V \mid \Stab_\Gamma(w) < \Gamma_0\}$ and
denote by $X^\circ=V^\circ/\Gamma$ its image.
Then we have an \'etale cover $V^\circ/\Gamma_0 \to X^\circ$.
% Similarly, let $X_{\Gamma_0}^\circ := X_{\Gamma_0} \cap X^\circ
% \cong V^{\Gamma_0,\circ}/N(\Gamma_0)^0$, where $V^{\Gamma_0,\circ}
% := V^{\Gamma_0} \cap V^\circ$.
Note that $X_{\Gamma_0}$ is closed in $X^\circ$.
We have an \'etale covering $(V^{\Gamma_0})^\circ \to X_{\Gamma_0}$. Clearly, passing to the \'etale covers,
\[
V^\circ/\Gamma_0 \cong (V^{\Gamma_0})^\circ \times (V^{\Gamma_0})^{\perp}/\Gamma_0.
\]
%where $V^{\Gamma_0,\circ} = V^\Gamma \cap V^\circ$. 
We note that this holds for all $v \in (V^{\Gamma_0})^\circ$.  Then,
the proposition follows from this together with the fact that, when
$\bC^\times$ acts freely on $\bar v$, then the natural map
$\widehat{V^{\Gamma_0}}_{\bC^\times \cdot v} \to (\widehat
{X_{\Gamma_0}})_{\bC^\times \cdot v}$ of completions is an isomorphism
(since $\bC^\times \cdot v$ does not intersect its image under any
nontrivial element of $N(\Gamma_0)^0$).

\begin{remark} Putting the above together with the results of Section
  \ref{s:main}, we can relate the Darboux-Weinstein decompositions for
  an arbitrary $\bC^\times$-Poisson variety $X$ with a quotient
  $X/\Gamma$ for $\Gamma$ a finite subgroup. Namely, suppose we are
  given $y$ in a symplectic leaf $Y \subseteq X$ which we assume to be
  $\bC^\times$-stable and a $\bC^\times$-equivariant Poisson
  decomposition $\hat X_{\bC^\times \cdot y} \cong \hat Y_{\bC^\times
    \cdot y} \times S$.  As before, let
  $\Gamma_0:=\Stab_\Gamma(y)$, $N(\Gamma_0) < \Gamma$ be the
  normalizer, and $N(\Gamma_0)^0 := N(\Gamma_0)/\Gamma_0$.  Let $z :=
  \Gamma \cdot y \in X/\Gamma$.  Then the 
  symplectic leaf of $z$ in $X/\Gamma$ is $Z
  := Y_{\Gamma_0}/N(\Gamma_0)^0$ for $Y_{\Gamma_0} := \{y' \in Y \mid
  \Stab_\Gamma(y')=\Gamma_0\}$. Assume that $\bC^\times$ acts freely
  on $z$. Then, we then get a decomposition
  $\widehat{X/\Gamma}_{\bC^\times \cdot z} \cong \hat Z_{\bC^\times
    \cdot z} \times \bigl((T_y Y_{\Gamma_0})^\perp \times
  S\bigr)/\Gamma_0$, for $(T_y Y_{\Gamma_0})^\perp \subseteq T_y Y$
  the perpendicular to $T_y Y_{\Gamma_0}$.
\end{remark}

\begin{remark}
In the case above where $V = T^* U$ for $U$ a complex vector space and
$\Gamma < \GL(U) < \Sp(V)$, we can strengthen the proposition by completing only
in the $U$ direction. For $u \in U$ with stabilizer $\Gamma_0$,
and $\bar u \in U/\Gamma \subseteq X$ its image, this yields
% localizing only on $U$.  Let $u \in U^{\Gamma_0}$ have stabilizer
% $\Gamma_0$ and let $p: U \to U/\Gamma$ be the quotient. Then we have
% the \'etale local decomposition,
% \begin{equation}
% T^*(U^{\Gamma_0,\circ}) \times T^*((U^{\Gamma_0})^\perp)/\Gamma_0 \iso
% X^\circ \times_{U^\circ/\Gamma} U^{\circ}/\Gamma_0
% \end{equation}
% which induces the formal decomposition
\begin{multline}
X \times_{U/\Gamma} \widehat{U/\Gamma}_{\bC^\times \cdot \bar u} \cong
 \\ \bigl(X_{\Gamma_0} \times_{U^{\Gamma_0}/N(\Gamma_0)^0} 
\widehat {U^{\Gamma_0}/N(\Gamma_0)^0}_{\bC^\times \cdot \bar u}\bigr) 
\times \bigr(T^*(U^{\Gamma_0})^{\perp}/\Gamma_0 
\times_{(U^{\Gamma_0})^{\perp}/\Gamma_0} \widehat{(U^{\Gamma_0})^\perp}/\Gamma_0\bigl).
\end{multline}
Also, in the case that $N(\Gamma_0)^0 = \{1\}$, then we can replace
all completions along the punctured line $\bC^\times \cdot v$ by
completions along the entire leaf $X_{\Gamma_0}$ (more generally, we
can complete not merely along a punctured line but along any
$\bC^\times$-invariant locally closed subset which does not intersect
its images under $N(\Gamma_0)^0$.)
\end{remark}
\subsection{Quantization}
A standard quantization of $V$ is the algebra of differential
operators on a Lagrangian $U \subseteq V$ (so that $V \cong T^* U$).
So $X$ is quantized by $\caD_\hbar(U)^\Gamma$.  Call this $A_X$, and
similarly define $A_{X_{\Gamma_0}}$ and $A_{V^{\Gamma_0}/\Gamma_0}$.
By a straightforward quantum generalization of Proposition
\ref{p:linquot}, one obtains, under the same assumptions:
\begin{corollary} There is a canonical graded continuous $\bC[\![\hbar]\!]$-algebra
isomorphism
\begin{equation}
  (\hat A_{X})_{\bC^\times \cdot \bar v} \cong
 (\hat A_{X_{\Gamma_0}})_{\bC^\times \cdot \bar v} 
\hat \otimes_{\bC[\![\hbar]\!]} 
A_{V^{\Gamma_0}/\Gamma_0}.
\end{equation}
\end{corollary}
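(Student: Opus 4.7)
The plan is to quantize the argument of Proposition~\ref{p:linquot} step by step, using that $\caD_\hbar$ of a direct sum of Lagrangians factors as a completed tensor product, and that differential operators descend along \'etale covers as invariants under the deck group.

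First I would choose a $\Gamma_0$-stable Lagrangian decomposition $U = U_1 \oplus U_2$ with $U_1 = U^{\Gamma_0}$; such a $U_2$ exists by averaging over the finite group $\Gamma_0$, and then $T^*U_1 = V^{\Gamma_0}$ and $T^*U_2 = (V^{\Gamma_0})^{\perp}$ as symplectic subspaces of $V$. This yields a graded $\bC[\![\hbar]\!]$-algebra isomorphism $\caD_\hbar(U) \cong \caD_\hbar(U_1) \hat\otimes_{\bC[\![\hbar]\!]} \caD_\hbar(U_2)$, and since $\Gamma_0$ acts trivially on $U_1$ and preserves $U_2$, taking $\Gamma_0$-invariants gives
\[
\caD_\hbar(U)^{\Gamma_0} \cong \caD_\hbar(U_1) \hat\otimes_{\bC[\![\hbar]\!]} A_{(V^{\Gamma_0})^\perp/\Gamma_0}.
\]

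Next I would invoke the \'etale cover $V^\circ/\Gamma_0 \to X^\circ = V^\circ/\Gamma$ with deck group $N(\Gamma_0)^0$, which identifies $A_X|_{X^\circ}$ with $(\caD_\hbar(U)^{\Gamma_0}|_{V^\circ/\Gamma_0})^{N(\Gamma_0)^0}$. By hypothesis $\bC^\times$ acts freely on $\bar v$, so the punctured line $\bC^\times \cdot v \subseteq V^\circ/\Gamma_0$ has trivial $N(\Gamma_0)^0$-stabilizer and is disjoint from each of its nontrivial $N(\Gamma_0)^0$-translates. Consequently the formal completion along $\bC^\times \cdot \bar v$ commutes with taking $N(\Gamma_0)^0$-invariants---the preimage splits as a disjoint union of $|N(\Gamma_0)^0|$ formal neighborhoods that are freely permuted, and $N(\Gamma_0)^0$-invariants project isomorphically onto any one factor---giving an isomorphism $(\hat A_X)_{\bC^\times \cdot \bar v} \cong (\widehat{\caD_\hbar(U)^{\Gamma_0}})_{\bC^\times \cdot v}$ of graded $\bC[\![\hbar]\!]$-algebras.

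Finally, since $\bC^\times \cdot v \subseteq T^*U_1$ and the tensor decomposition above is $\bC^\times$-equivariant, completion acts only on the first factor; applying the same completion/invariants interchange to the \'etale cover $(V^{\Gamma_0})^\circ \to X_{\Gamma_0}$ identifies $(\widehat{\caD_\hbar(U_1)})_{\bC^\times \cdot v}$ with $(\hat A_{X_{\Gamma_0}})_{\bC^\times \cdot \bar v}$, and composing yields the corollary. The main obstacle is justifying at the noncommutative, graded $\bC[\![\hbar]\!]$-algebra level (rather than merely for Poisson algebras, as in Proposition~\ref{p:linquot}) the interchange of formal completion with finite-group invariants along a locus of trivial deck-group stabilizer; once this is granted, the rest is a direct quantum transcription of the classical argument.
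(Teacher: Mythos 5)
Your overall strategy is the paper's: the paper obtains this corollary as the ``straightforward quantum generalization'' of Proposition \ref{p:linquot}, i.e., split off a $\Gamma_0$-stable Lagrangian complement so that $\caD_\hbar(U)\cong \caD_\hbar(U^{\Gamma_0})\hat\otimes\caD_\hbar(U_2)$ with $\Gamma_0$ acting only on the second factor, and use freeness of the $\bC^\times$-action on $\bar v$ to see that completing along the punctured line erases the difference between $X$ and its covers. Your first and last steps are fine. The middle step, however, is wrong as stated: $V^\circ/\Gamma_0\to X^\circ=V^\circ/\Gamma$ is \'etale of degree $[\Gamma:\Gamma_0]$ but is \emph{not} Galois in general, so invariants under its deck group $N(\Gamma_0)^0$ do not recover the base; they give $\caD_\hbar(U)^{N(\Gamma_0)}$, which strictly contains $A_X=\caD_\hbar(U)^{\Gamma}$ whenever $N(\Gamma_0)\neq\Gamma$ (e.g.\ $\Gamma$ the symmetric group $S_3$ and $\Gamma_0$ generated by a transposition). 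Likewise the preimage of $\bC^\times\cdot\bar v$ in $V^\circ/\Gamma_0$ consists of $|\Gamma_0\backslash\Gamma/\Gamma_0|$ curves, not of $|N(\Gamma_0)^0|$ freely permuted lines, and $N(\Gamma_0)^0$ does not permute them transitively. The Galois/deck-group mechanism is only valid for the leaf cover $(V^{\Gamma_0})^\circ\to X_{\Gamma_0}$, where $N(\Gamma_0)^0$ does act freely; there your argument is correct.

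The conclusion you extract from the faulty step, $(\hat A_X)_{\bC^\times\cdot\bar v}\cong(\widehat{\caD_\hbar(U)^{\Gamma_0}})_{\bC^\times\cdot v}$, is nonetheless true, and the repair is local: work with $\Gamma$-invariants of $\caD_\hbar(U)$ directly. Since completion along a two-sided ideal commutes with invariants under the finite group $\Gamma$, one completes $\caD_\hbar(U)$ along the preimage of $\bC^\times\cdot\bar v$, which is the disjoint union of the $[\Gamma:\Gamma_0]$ lines $\gamma\,\bC^\times\cdot v$; disjointness holds because any $\gamma$ sending $v$ to a $\bC^\times$-multiple of itself normalizes $\Gamma_0=\Stab_\Gamma(v)$ and hence, by the freeness hypothesis, lies in $\Gamma_0$. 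The completed algebra is the product of the completions along the individual lines, permuted transitively by $\Gamma$ with set-wise stabilizer $\Gamma_0$, so its $\Gamma$-invariants are the $\Gamma_0$-invariants of the single factor completed along $\bC^\times\cdot v$, which is exactly $(\widehat{\caD_\hbar(U)^{\Gamma_0}})_{\bC^\times\cdot v}$. (Equivalently: the \'etale map $V^\circ/\Gamma_0\to X^\circ$ induces an isomorphism of completions along the one component of the preimage that maps isomorphically onto $\bC^\times\cdot\bar v$.) One further small imprecision: completing along the punctured line also completes the slice factor at its cone point (and $\hbar$-adically), so ``completion acts only on the first factor'' is loose, though consistent with the way the corollary is stated.
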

This implies a positive answer to Question \ref{q:q-slice} for these quantizations.

There is a well-known family of quantizations which generalizes the
invariant differential operators (and in fact yields the universal
deformation of these), called spherical symplectic reflection
algebras \cite{EGsra}.  In order to be consistent with typical
notation for symplectic reflection algebras, we will actually consider
the Poisson variety $V^*/\Gamma$, so that its algebra of functions is
$\Sym(V)^\Gamma$.

Recall that one constructs the universal deformation of $\Weyl(V)
\rtimes \Gamma$ as follows (\cite{EGsra},
cf.~e.g.,\cite{Los-csra}). Let $S \subseteq \Gamma$ be the subset of
symplectic reflections, i.e., $s \in S$ if $\rk(s-\Id) = 2$ (this is
the minimal possible nonzero rank since the determinant of $s$ is one;
geometrically, $s$ can be thought of as a generalized reflection
around the symplectic subspace $\ker(s-\Id)$).  Let $S = S_1 \sqcup
S_2 \sqcup \cdots \sqcup S_r$ be the partition into conjugacy classes.
Define the vector space $\mathfrak{c}$ with basis $\hbar, c_1, \ldots,
c_r$, and for $s \in S_i$, we set $c(s) := c_i$.  (Note that
$\mathfrak{c} = \bC \cdot \hbar \oplus \Hom_\Gamma(S,\bC)^*$.)  Then
we define the algebra $H(V,\Gamma)$ as the quotient of the
skew-product algebra $\Sym(\mathfrak{c}) \otimes TV \rtimes \Gamma$ by
the relations
\begin{equation}
[x,y]=\hbar \omega(x,y) + \sum_{s \in S} c(s) \omega_s(x,y) s,
\end{equation}
where $\omega$ is the symplectic form, and $\omega_s$ is the
projection of $\omega$ to $\wedge^2 ker(s-\Id)^\perp$, i.e.,
$\omega_s(x,y) = 0$ if $x$ or $y$ is in $\ker(s-\Id)$, and
$\omega_s(x,y)=\omega(x,y)$ if $x,y \in \img(s-\Id)$.

Then, the (universal) spherical symplectic reflection algebra is
defined as $U(V,\Gamma) := e H(V,\Gamma) e$, where $e :=
\frac{1}{|\Gamma|} \sum_{\gamma \in \Gamma} \gamma \in \bC[\Gamma]$ is
the symmetrizer.
The algebra $U(V,\Gamma)$ encodes the family of
quantizations $\hat U_\lambda(V,\Gamma)$ for
$\lambda=(\lambda_1,\ldots,\lambda_r) \in \bC^r$, where
$U_\lambda(V,\Gamma) := U(V,\Gamma)/(c_i-\lambda_i\hbar)$ and $\hat
U_\lambda(V,\Gamma)$ is its $\hbar$-adic completion. 
 There are canonical quotients $H(V,\Gamma)
\onto \cO(V^*) \rtimes \Gamma$ and $U(V,\Gamma) \onto
\cO(V^*)^\Gamma$, with kernel the ideal generated by
$\mathfrak{c}$. 

Then, Proposition \ref{p:linquot} quantizes to the following
statement.  We will use the projection $\pi: H(V,\Gamma) \to \cO(V^*)
\rtimes \Gamma$ and similarly $\pi: U(V,\Gamma) \to \cO(V^*)^\Gamma$.
Given any closed subvariety $Z \subseteq V^*/\Gamma$, with ideal
$I_Z$, let $\hat H(V,\Gamma)_Z$ denote the completion of $H(V,\Gamma)$
along the (two-sided) ideal generated by $\pi^{-1}(I_Z)$, i.e.,
$\pi^{-1}(\cO(V^*) I_Z \rtimes \Gamma)$.  Define in the same way $\hat
U(V,\Gamma)_Z$, which equals $e\hat H(V,\Gamma)_Z e$.  Given an open
affine subvariety $Y \subseteq Z$, write $Z$ as the complement of the
vanishing of a function $f \in \cO(V^*)^\Gamma$. We can define the
localizations $H(V,\Gamma)[\pi^{-1}(f)^{-1}]$ and
$U(V,\Gamma)[\pi^{-1}(f)^{-1}]$, obtained by inverting all elements in
$\pi^{-1}(f)$ (these are Ore localizations with respect to the set
$\bigcup_{m \geq 1} \pi^{-1}(f^m)$). Then we define $\hat
H(V,\Gamma)_Y$ and $\hat U(V,\Gamma)$ as the completions of these
localizations with respect to the ideals generated by $\pi^{-1}(I_Z)$.

Let $x \in V^*$ be a point whose stabilizer is $\Gamma_0 <
\Gamma$. Let $\bar x \in V^*/\Gamma$ be its image.
% $q: V^* \to V^*/\Gamma$ be the projection, as before, so we have
% $q(x) \in V^*/\Gamma$.

In the above context (which is significantly more delicate than the
context merely of invariant differential operators, due to the
deformed relations), Losev proved a decomposition theorem
\cite[Theorem 1.2.1]{Los-csra}.  Using the result of the previous
section, we can obtain a graded version of his theorem.  Assume as
before that $\bC^\times$ acts freely on $\bar x$.  Suppose that (up to
reordering) $S_1, \ldots, S_i$ are the conjugacy classes of symplectic
reflections which intersect $\Gamma_0$, and let $S_1^0,\ldots,S_i^0$
be their intersections with $\Gamma_0$. We then consider
$U((V^{\Gamma_0})^\perp,\Gamma_0)$ to be a
$\bC[\![\hbar,c_1,\ldots,c_i]\!]$-algebra, defined using the conjugacy
classes $S_1^0,\ldots, S_i^0$.
\begin{theorem}\label{t:sra-gc}
  There are graded continuous $\bC[\![\hbar,c_1,\ldots,c_r]\!]$-algebra isomorphisms
\begin{gather}
  \hat U(V,\Gamma)_{\bC^\times \cdot \bar x} \cong \hat
  U(V^{\Gamma_0},\{1\})_{\bC^\times \cdot \bar x} \hat \otimes_{\bC[\![\hbar]\!]}
  \hat U((V^{\Gamma_0})^\perp,\Gamma_0)_0[\![c_{i+1},\ldots,c_r]\!], \\
  \hat H(V,\Gamma)_{\bC^\times \cdot \bar x} \cong \hat
  H(V^{\Gamma_0},\{1\})_{\bC^\times \cdot \bar x} \hat \otimes_{\bC[\![\hbar]\!]}
  \Mat_{|\Gamma/\Gamma_0|}(\hat H((V^{\Gamma_0})^\perp,\Gamma_0)_{0})[\![c_{i+1},\ldots,c_r]\!].
\end{gather}
\end{theorem}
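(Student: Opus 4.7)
The plan is to combine Losev's pointed decomposition theorem \cite[Theorem 1.2.1]{Los-csra} with the quantum decomposition Theorem \ref{t:quant}, using Proposition \ref{p:linquot} as the quasiclassical input. Observe first that $U(V,\Gamma)$ is a graded $\bC[\![\hbar,c_1,\ldots,c_r]\!]$-algebra quantizing $V^*/\Gamma$, with grading $|V|=1$ and $|\hbar|=|c_j|=2$, so the Poisson bracket has degree $-2$; the same applies to the slice linear quotient $(V^{\Gamma_0})^\perp/\Gamma_0$ and its universal SRA, with parameters $c_1,\ldots,c_i$ indexed exactly by those conjugacy classes of symplectic reflections in $\Gamma$ which meet $\Gamma_0$ (the other classes contain no element stabilizing a generic point of $(V^{\Gamma_0})^\perp$).

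Apply Proposition \ref{p:linquot} to obtain the $\bC^\times$-Poisson decomposition of $\widehat{(V^*/\Gamma)}_{\bC^\times \cdot \bar x}$, and verify the hypotheses of the second paragraph of Theorem \ref{t:quant}: the slice has symplectic singularities (it is a finite linear symplectic quotient), carries the required contracting $\bC^\times$-action, and $U((V^{\Gamma_0})^\perp,\Gamma_0)$ provides a $\bC^\times$-compatible quantization. Theorem \ref{t:quant} then yields a graded continuous $\bC[\![\hbar]\!]$-algebra decomposition
\[
\hat U(V,\Gamma)_{\bC^\times \cdot \bar x} \cong \hat U(V^{\Gamma_0}, \{1\})_{\bC^\times \cdot \bar x} \hat\otimes_{\bC[\![\hbar]\!]} A_\hbar'',
\]
where $A_\hbar''$ is the centralizer of the first factor; by Theorem \ref{t:q-unique}, the first tensorand is identified with the unique graded quantization of the formal symplectic neighborhood of $\bC^\times \cdot \bar x$ in $V^{\Gamma_0}$.

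The crux is to identify $A_\hbar''$, as a family over $\bC[\![\hbar,c_1,\ldots,c_r]\!]$, with $\hat U((V^{\Gamma_0})^\perp,\Gamma_0)_0[\![c_{i+1},\ldots,c_r]\!]$. I would specialize further by completing at the closed point $\bar x \in \bC^\times \cdot \bar x$ and invoke Losev's pointed theorem: the resulting non-equivariant decomposition at $\bar x$ identifies the completion of $A_\hbar''$ at its closed point with $\hat U((V^{\Gamma_0})^\perp,\Gamma_0)_0[\![c_{i+1},\ldots,c_r]\!]$. Since both sides are graded quantizations of the same $\bC^\times$-Poisson formal slice and agree after completion at the closed point, uniqueness of $\bC^\times$-compatible quantizations of the conical slice (Corollary to Theorem \ref{t:q-unique}, together with a deformation argument over the formal parameters) forces the identification before completion. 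The parameters $c_{i+1},\ldots,c_r$ appear as central formal parameters because the corresponding conjugacy classes do not intersect $\Gamma_0$, hence act trivially on the slice in the quasiclassical limit and appear only via a trivial extension of scalars.

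For the $H$-algebra statement, I would use a Morita-type argument. Set $e_0 := \frac{1}{|\Gamma_0|} \sum_{\gamma \in \Gamma_0} \gamma$. Because $\bC^\times$ acts freely on $\bar x$, coset representatives for $\Gamma/\Gamma_0$ produce disjoint formal neighborhoods in the $\Gamma$-orbit of $\bC^\times \cdot \bar x$, giving a decomposition $e = \frac{1}{[\Gamma : \Gamma_0]} \sum_{g \in \Gamma/\Gamma_0} g e_0 g^{-1}$ into mutually orthogonal idempotents inside $\hat H(V,\Gamma)_{\bC^\times \cdot \bar x}$. This produces an isomorphism $\hat H(V,\Gamma)_{\bC^\times \cdot \bar x} \cong \Mat_{|\Gamma/\Gamma_0|}(e_0 \hat H(V,\Gamma)_{\bC^\times \cdot \bar x} e_0)$; the corner algebra in turn decomposes by the spherical result already proved (since $e_0$ is the symmetrizer for $\Gamma_0$ and acts trivially on the $V^{\Gamma_0}$ factor), yielding the claimed matrix algebra times $\hat H(V^{\Gamma_0},\{1\})_{\bC^\times \cdot \bar x}$ shape. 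The main obstacle throughout is bookkeeping: matching the symplectic reflection parameters $c_j$ and the $\bC^\times$-gradings on both sides, and verifying that the idempotent decomposition is compatible with the formal topology along $\bC^\times \cdot \bar x$ rather than merely along $\bar x$.
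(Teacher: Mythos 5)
Your route differs from the paper's: the paper does not try to bootstrap from Losev's pointed theorem via the equivariant machinery of \S \ref{ss:quant}; it observes that the statement follows by rerunning Losev's arguments with completion along the punctured line $\bC^\times \cdot \bar x$ in place of completion at $\bar x$, and in fact is a corollary of Losev's \emph{sheafified} decomposition \cite[Theorem 2.5.3]{Los-csra}, which is already a $\bC^\times$-equivariant isomorphism of sheaves of $\widehat{\Sym\mathfrak{c}}\rtimes\Gamma$-algebras on the leaf (with the Bezrukavnikov--Etingof argument as a simpler alternative when $\Gamma < \GL(W)$, $V = T^*W$). Your proposal, as written, has a genuine gap at its crux. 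The step ``the two sides agree after completion at the closed point, so uniqueness of $\bC^\times$-compatible quantizations of the conical slice forces the identification before completion'' is unsound: the corollary to Theorem \ref{t:q-unique} applies only to smooth symplectic $X$ with a free $\bC^\times$-action at the chosen point, whereas the slice $(V^{\Gamma_0})^\perp/\Gamma_0$ is a singular Poisson cone completed at its $\bC^\times$-fixed vertex, and its graded quantizations are very far from unique --- the whole family $U_\lambda((V^{\Gamma_0})^\perp,\Gamma_0)$ consists of pairwise non-isomorphic graded quantizations, which is exactly why the parameters $c_1,\ldots,c_i$ appear in the statement. Worse, Losev's pointed \cite[Theorem 1.2.1]{Los-csra} produces a non-equivariant isomorphism of completions at $\bar x$, which is not a $\bC^\times$-fixed point, so that completion carries no grading at all; you therefore cannot extract from it any comparison of graded structures, and the equivariant content --- precisely what Theorem \ref{t:sra-gc} adds to Losev --- is never actually established by your argument.

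Two further points need repair even if the crux were fixed. First, Theorem \ref{t:quant} is stated for quantizations over $\bC[\![\hbar]\!]$ whose reduction mod $\hbar$ is $\cO(X)$; $U(V,\Gamma)$ is a family over $\bC[\![\hbar,c_1,\ldots,c_r]\!]$ (mod $\hbar$ alone it is a Poisson deformation of $\cO(V^*)^\Gamma$, not $\cO(V^*)^\Gamma$), so invoking it requires an extension to formal multi-parameter families, or a specialization $c_j = \lambda_j\hbar$ followed by an argument that the isomorphisms assemble over the full base; neither is supplied, and the theorem claims a single $\bC[\![\hbar,c_1,\ldots,c_r]\!]$-linear isomorphism, not a family of specializations. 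Second, in the $H$-algebra step your formula $e = \frac{1}{[\Gamma:\Gamma_0]}\sum_{g} g e_0 g^{-1}$ is false in $\bC[\Gamma]$; the correct mechanism (as in the paper's remark and in Losev/Bezrukavnikov--Etingof) is the centralizer construction $\End_A(\operatorname{Fun}_{\Gamma_0}(\Gamma,A))$, with the orthogonal idempotents being the characteristic functions of the $|\Gamma/\Gamma_0|$ connected components of the completed neighborhood of the $\Gamma$-orbit of the punctured line, which live in the completed algebra rather than in the group algebra. The disjointness of those components does follow from the freeness of the $\bC^\times$-action on $\bar x$, so that part of your outline is salvageable once phrased this way.
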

Here $\Mat_r(A)$ is the algebra of $r$ by $r$ matrices with
coefficients in $A$ (i.e., $\Mat_r(\bC) \otimes A$). 

The theorem implies that Question \ref{q:q-slice} has a positive
answer for every completed spherical symplectic reflection algebra
$\hat U_{\lambda}(V,\Gamma)$.
% , where $\lambda = (\lambda_1,\ldots,\lambda_r)$,
% $U_{\lambda}(V,\Gamma) :=U(V,\Gamma)/ (c_i-\hbar \lambda_i)$, and
% $\hat U_{\lambda}(V,\Gamma)$ is the $\hbar$-adic completion.
\begin{remark}
  The way that the matrix algebra above actually arises is by the
  centralizer construction: for any algebra $A$ with an action by
  $\Gamma_0 < \Gamma$, one takes $\End_A(\text{Fun}_{\Gamma_0}(\Gamma, A))$,
  where $\text{Fun}_{\Gamma_0}(\Gamma, A)$ is the right $A$-module of
  $\Gamma_0$-equivariant functions from $\Gamma$ to $A$. It is clear
  that $\End_A(\text{Fun}_{\Gamma_0}(\Gamma, A)) \cong
  \Mat_{|\Gamma/\Gamma_0|}(A)$, but the isomorphism depends on a set
  of representatives of the right cosets $\Gamma_0\setminus
  \Gamma$. We apply this to $A = \hat
  H((V^{\Gamma_0})^\perp,\Gamma_0)_0$.
\end{remark}
The proof of the above theorem is by checking that the arguments of
\cite{Los-csra} go through when one completes along the punctured line
$\bC^\times \cdot \bar x$ instead of at $\bar x$. In fact, the above
statement is a corollary of a sheafified statement \cite[Theorem
2.5.3]{Los-csra}, which gives a $\bC^\times$-equivariant isomorphism
of sheaves of $\widehat{\Sym \mathfrak{c}} \rtimes \Gamma$-algebras on
the symplectic leaf containing $\bar x$.  We note that in the case where
$\Gamma < \GL(U)$ for $U \subseteq V$ a Lagrangian subspace (so $V
\cong T^* W$ and hence $\GL(U) < \Sp(V)$, one can instead use the
argument of \cite{BE-pirfrCa}, which is simpler, and again
complete along the punctured line instead of the point. We omit
further details.

\section{The hypertoric case}\label{s:hypertoric}
Let $T^m = (\bC^\times)^m$ act linearly and faithfully on $\bA^n$, and let
$\mathfrak{t}^m := \Lie T^m$.  Associated to this is a Hamiltonian
action of $T^m$ on the cotangent bundle $T^*\bA^n$, with the moment map
$\mu: T^*\bA^n \to (\mathfrak{t}^m)^* \cong \bC^m$.  We consider the
variety $X = \mu^{-1}(0)/\!/T^m$, called the affine hypertoric
variety.  This is also denoted by $T^*\bA^n /\!/\!/\!/ T^m$, the
Hamiltonian reduction. We assume here that $\mu^{-1}(0)/\!/T^m$ admits
a smooth symplectic resolution given by a Geometric Invariant Theory quotient,
$\mu^{-1}(0)/\!/_\chi T^m = \Proj \bigoplus_{m \geq 0}
\bC[\mu^{-1}(0)]^{m \chi}$, for a suitable character $\chi: T^m \to
\bC^\times$; in other words, as we recall in the proof, the action of
$T^m$ is given by a unimodular hyperplane arrangement.

Let us use the $\bC^\times$-action given by dilations in the vector
space $\bC^{2n} = T^*\bA^n$; thus $k=2$ in the previous notation.

In this case, a much stronger statement than Theorem \ref{t:main-sl}
holds: we have in fact an equivariant Darboux-Weinstein decomposition
\emph{Zariski-locally}, as we prove below. 
\begin{theorem} \label{t:hypertoric}
  % Suppose $X$ is a unimodular hypertoric cone.
  Let $X$ be a hypertoric cone as above, $Z$ be a symplectic leaf of
  $X$ other than the vertex, and $z \in Z$ a point. Then there is an
  open $\bC^\times$-stable neighborhood $X^\circ$ of $z$ which splits
  as a product of $\bC^\times$-Poisson varieties,
\[
X^\circ \cong Z^\circ \times S,
\]
with $Z^\circ = X^\circ \cap Z$ and $S$ the hypertoric cone corresponding to
the slice of $Z$.  

Moreover, $Z^\circ$ is $\bC^\times$-Poisson isomorphic to the
complement in $\bA^{\dim Z}$
% $=\Spec[x_1,\ldots,x_{\dim Z/2},y_1,\ldots,y_{\dim Z/2}]$
of $\leq n$ linear hyperplanes, equipped with the standard symplectic
form,
% $\omega = \sum dx_{i} \wedge dy_{i}$,
each of the coordinate functions homogeneous (not necessarily
of degree one), and the
symplectic form having weight two.
% Then for some $s \in S$ there is a decomposition \eqref{e:pr-ass} of
% $\cs$ Poisson schemes.
\end{theorem}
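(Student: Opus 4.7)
The plan is to exploit the explicit combinatorial description of $X$ and to produce $X^\circ$ by inverting a single $T^m$-invariant monomial. Write the moment map as $\mu(x,y) = \sum_{i=1}^n x_i y_i a_i \in (\mathfrak{t}^m)^*$, where the $a_i \in (\mathfrak{t}^m)^*$ are the weights of the $T^m$-action on $\bA^n$, so that $\cO(X) = (\bC[x_i, y_i]_{i=1}^n / (\mu))^{T^m}$. I would begin by recalling the standard classification of symplectic leaves of $X$: each non-vertex leaf $Z$ is indexed by a distinguished subset $F \subseteq \{1, \ldots, n\}$ (a flat of the associated matroid), such that on the closure $\bar Z$ one has $x_i y_i = 0$ for all $i \notin F$, while $Z$ itself consists of (images of) points with $x_i y_i \neq 0$ for $i \in F$.

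Lift the given $z$ to $\tilde z \in \mu^{-1}(0)$ and take the homogeneous $T^m$-invariant $f := \prod_{i \in F} x_i y_i \in \cO(X)$, which is nonvanishing at $\tilde z$. Set $X^\circ := X \setminus V(f)$, a $\bC^\times$-stable affine open neighborhood of $z$. The key observation is that the symplectic form $\omega = \sum_{i=1}^n dx_i \wedge dy_i$ on $T^*\bA^n$ splits as a sum over $i \in F$ and $i \in F^c$, and this splitting descends to the Hamiltonian reduction once $f$ has been inverted: the defining equation $\mu = 0$ can then be solved purely within the $F$-coordinates (pick a maximal independent subset of $\{a_i : i \in F\}$ and eliminate the corresponding $y_i$'s, which is possible since each $x_i$ with $i \in F$ is invertible on $X^\circ$).

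Combining these pieces yields an explicit algebra decomposition $\cO(X^\circ) \cong \cO(Z^\circ) \otimes \cO(S)$, where $\cO(S)$ is the subring generated by the $T^m$-invariant monomials supported in $F^c$---identified with the coordinate ring of the slice hypertoric cone associated to the arrangement $\{a_i : i \in F^c\}$ modulo the residual subtorus of $T^m$ fixing $F$ pointwise---and $\cO(Z^\circ)$ is generated by the $T^m$-invariant monomials in the $F$-coordinates after localizing at $f$. Because $\omega$ decouples along $F \sqcup F^c$ and $\bC^\times$ dilates uniformly, the decomposition is a $\bC^\times$-Poisson splitting. Unimodularity of the arrangement guarantees that $\cO(Z^\circ)$ is a Laurent polynomial algebra in appropriate generators, presenting $Z^\circ$ as the complement in $\bA^{\dim Z}$ of at most $n$ linear hyperplanes (one for each independent $x_i y_i$ that had to be inverted); the induced symplectic form is the standard Darboux form, and the $\bC^\times$-dilation gives each coordinate a positive integer weight and $\omega$ weight two.

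The main obstacle is the combinatorial bookkeeping: correctly identifying the leaf data $F$ (together with the sign/stability conventions determining which coordinates in $F^c$ are forced to vanish), choosing compatible generators on both sides of the splitting, and leveraging unimodularity to upgrade the local product structure from \'etale- to Zariski-local. Once these combinatorial ingredients are in place, verifying that the resulting isomorphism is algebraic, $\bC^\times$-equivariant, and Poisson is essentially mechanical, paralleling the $\mathfrak{sl}(2)$-example \eqref{e:sl2}.
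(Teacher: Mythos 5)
Your overall strategy is in the same family as the paper's (localize Zariski-locally upstairs on $T^*\bA^n$, eliminate variables using the moment map and invertible coordinates, and identify $S$ with the hypertoric cone for the stabilizer torus $\hat T$ acting on the transverse directions), but two concrete steps have genuine gaps. First, your description of the leaf, and hence your choice of open set, is wrong. Points of the leaf attached to your $F$ are characterized by having stabilizer exactly $\hat T$ (equivalently, by lying over closed free orbits of $T := T^m/\hat T$ in $T^*\bA^F\cap\mu^{-1}(0)$), \emph{not} by nonvanishing of the products $x_iy_i$, $i\in F$. For example, for $\bC^\times$ acting on $\bA^2$ with weights $(1,-1)$ one has $X\cong\bC^2/(\bZ/2)$, and the open (non-vertex) leaf contains the image of $(x_1,x_2,y_1,y_2)=(1,1,0,0)$, where every $x_iy_i$ vanishes; so your $f=\prod_{i\in F}x_iy_i$ vanishes at such a $z$ and $X^\circ:=X\setminus V(f)$ is not a neighborhood of $z$. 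The paper instead inverts exactly those coordinates of a lift $\tilde z$ that are nonzero, and then uses freeness of the $T$-orbit of $\tilde z$ together with unimodularity to choose a subset $G$ of leaf-direction indices with $|G|=\dim T$ whose $x$-coordinates (after possibly swapping some $x_i\leftrightarrow y_i$) are invertible and on whose coordinate torus $T$ acts simply transitively; the eliminated variables are the conjugate $y_i$, $i\in G$. Relatedly, $\mu=0$ cannot be ``solved purely within the $F$-coordinates'': the weights $a_i$, $i\in F$, annihilate $\hat{\mathfrak{t}}$, so they span only $(\mathfrak{t}^m/\hat{\mathfrak{t}})^*$; only that part of the moment map can be eliminated in the leaf directions, and the surviving $\hat{\mathfrak{t}}^*$-valued part is precisely the moment map cutting out the slice.

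Second, your identification of $\cO(S)$ inside $\cO(X^\circ)$ as ``the $T^m$-invariant monomials supported in $F^c$'' is incorrect, and fixing it is where the real content (and the use of unimodularity) lies. The slice is $T^*\bA^{F^c}/\!/\!/\!/\hat T$, i.e.\ the $\hat T$-invariants of the transverse variables modulo $\mu_{\hat{\mathfrak{t}}}$, and monomials in the transverse variables invariant under all of $T^m$ form in general a proper subring. Concretely, let $T^2$ act on $\bA^4$ with weights $(1,0),(1,0),(0,1),(1,1)$ and take $\hat T$ the second factor, so the leaf directions are $\{1,2\}$ (a two-dimensional non-vertex leaf) and the transverse directions $\{3,4\}$: the slice is an $A_1$-singularity whose functions are spanned by $x_3y_3,x_3y_4,x_4y_3,x_4y_4$ modulo the moment map, while the $T^2$-invariant monomials in $x_3,y_3,x_4,y_4$ are generated by $x_3y_3$ and $x_4y_4$ alone. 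To embed $\cO(S)$ into $\cO(X^\circ)$ one must multiply each transverse coordinate by a monomial in the inverted leaf coordinates (here e.g.\ $x_1x_3y_4$) so as to make it invariant under a complement $T'$ of $\hat T$; the existence of such integral twists by units is exactly what the simply transitive action of $T$ on the coordinate torus $(\bA^G)^\circ$, forced by unimodularity, provides, and one then checks the twisted coordinates still Poisson-commute with the leaf-direction functions and reproduce the slice bracket (the twist exponents cancel in $\{x_i',y_i'\}$ because $x_iy_i$ is already invariant). Your proposal relegates all of this to ``bookkeeping,'' but without it the asserted splitting $\cO(X^\circ)\cong\cO(Z^\circ)\otimes\cO(S)$ fails with your definitions; repairing these points essentially reproduces the paper's argument.
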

As we will see, the hyperplanes appearing in the second paragraph consist of
some of the coordinate hyperplanes and at most $m/2$ additional linear
hyperplanes.  We note that we use the notation $Z$ for the symplectic
leaf in the theorem (and proof below) so as to be able to use
coordinates $x_i, y_i$ on $T^*\bA^n$ without confusion.

The theorem implies that Question \ref{q:slice} has an affirmative
answer in this case.
\begin{remark}
  For the above theorem, we did not need to assume that $\bC^\times$
  acts freely on $z$, nor even that the stabilizer of $z$ be minimal
  in $Z$.  
  % Indeed, the weights on the coordinate functions on $Z$ which are
  % nonzero at $z$ could have a greatest common divisor
  % which
  % is not one, and then this is the order of the stabilizer of $z$
  % under $\bC^\times$, denoted by $\ell$ in Theorem
  % \ref{t:main-sl}.
  This is a special feature of the hypertoric case (already in the
  finite linear quotient case, it follows from the preceding section
  that there need not be a product decomposition when $\bC^\times$
  acts nonfreely, but rather one only gets a statement as in Theorem
  \ref{t:main-sl}.(iv)). Note that, for generic $z \in Z$, the order
  of the stabilizer in $\bC^\times$ is either $1$ or $2$, just as in the case of
  finite linear quotients (so the $\ell$ appearing in Theorem \ref{t:main-sl}
 will be either $1$ or $2$).
\end{remark}
\begin{remark}
  Recall that there is an important residual Hamiltonian action of
  $\bar T_X := (\bC^\times)^n/T^m$ on $X$, obtained from the standard
  action of $(\bC^\times)^n$ on $\bA^n$ and hence its Hamiltonian
  action on $T^* \bA^n$. We can similarly define residual tori $\bar
  T_Z$ and $\bar T_S$ acting on $Z$ and $S$, respectively.  It follows
  from the proof below that $X^\circ$ and $Z^\circ$ are stable under
  their residual actions.  There is a canonical exact sequence $1 \to
  \bar T_S \to \bar T_X \to \bar T_Z \to 1$ (whose maps descend from the
  inclusion and projection of coordinates on $\bA^n$). Then, the
  isomorphism of the theorem is compatible with the actions of $\bar
  T_S$, and the projection $X^\circ \twoheadrightarrow Z^\circ$
  carries the action of $\bar T_X$ to that of $\bar T_Z$.
\end{remark}
\begin{remark}\label{r:hypertoric-family}
  It follows from the proof below that one similarly has a
  decomposition for the total space $\mathcal{X} := T^*\bA^n /\!/ T^m$ of
  the natural family $\bar \mu: \mathcal{X} \to (\mathfrak{t}^m)^*$ of
  (Poisson) deformations of $X$, with $\bar \mu$ the map which factors
  the moment map $\displaystyle \mu: T^*\bA^n \twoheadrightarrow
  \mathcal{X} \mathop{\to}^{\bar \mu} \mathfrak{t}^m$ (so $X=\bar
  \mu^{-1}(0)$, and $X_t:=\bar \mu^{-1}(t)$ gives the natural
  deformation for $t \in \mathfrak{t}^m$).\footnote{We remark that the
    assumption that $X$ admits a symplectic resolution is equivalent
    to the statement that $X_t$ be smooth for generic $t$, and hence
    smooth affine symplectic.}  Namely, we define $\mathcal{X}^\circ$
  in the same way as we define $X^\circ$ in the proof (except without
  intersecting with $\bar \mu^{-1}(0)$), and we obtain the
  $\bC^\times$-Poisson decomposition
\begin{equation}
\mathcal{X}^\circ \cong Z^\circ \times \mathfrak{t}^* \times \mathcal{S},
\end{equation}
with $\mathfrak{t} = \Lie T$ as defined in the proof below, $Z^\circ$
the same as in theorem, and $\mathcal{S}$ the natural deformation of
the hypertoric variety $S$ appearing there. The Poisson structure on
$\mathfrak{t}^*$ is zero, and its $\bC^\times$ action is the square of
the dilation action on $\bC^{\dim \mathfrak{t}} \cong \mathfrak{t}^*$
(i.e., giving the coordinate functions degree two).  There is also a
natural formula for the map $\bar \mu$ on the RHS compatible with the
isomorphism, and the residual Hamiltonian torus actions are compatible
as in the previous remark.
\end{remark}
\begin{proof}[Proof of Theorem \ref{t:hypertoric}]
%  We (mostly) follow the notation of \cite{PW-ih}. 
  By \cite[3.2,3.3]{BD-gtthm}, the assumption on the resolution of $X$
  above is equivalent to the following unimodularity condition.
  Without loss of generality, assume that $T^m$ acts on standard
  coordinate functions $x_1, \ldots, x_n, y_1, \ldots, y_n$ of $T^*
  \bA^n$ diagonally, i.e., by
\[
(c_1,\ldots,c_m) \cdot x_i = \prod_{j=1}^m c_j^{b_{ij}} x_i, \quad
(c_1, \ldots, c_m) \cdot y_i = \prod_{j=1}^m c_j^{-b_{ij}} y_i.
\]
In other words, the columns of the $n \times m$-matrix $(b_{ji})$ span
the kernel of the matrix dual to the associated central hyperplane
arrangement of $n$ hyperplanes in $(\bC^{n-m})^*$.  The unimodularity
condition is equivalent to the condition 
that all nonzero $m \times m$-minors are $\pm 1$.

% Let $\mu: T^* \bA \to \mathfrak{t}^m = \Lie T^m$ be the associated
% moment map.
In coordinates $\mathfrak{t}^m = \bC^m$ the moment map $\mu$ takes the
form
\[
\mu(x_1,\ldots,y_n)_i = \sum_{j=1}^n b_{ji} x_j y_j.
\]
Let $\pr: \mu^{-1}(0) \to X$ be the projection.  The symplectic leaves
of $X$ are given by certain subgroups $\hat T < T^m$, called
\emph{parabolic} subgroups, which are defined as those such that $\hat
T = \Stab(v)$ for some $v \in \bC^n$. Given such a subgroup, let $F
\subseteq \{1,\ldots,n\}$ be the subset of coordinates on which $\hat
T$ acts \emph{nontrivially}, so that $F^c$, the complement, is the
subset on which $\hat T$ acts trivially. Henceforth, for every $H
\subseteq \{1,\ldots,n\}$, we let $\bA^H$ denote the coordinate subset
corresponding to $H$; then $\bA^{F^c} \subseteq \bA^n$ is the fixed
locus of $\hat T$. Let $T: = T^m / \hat T$, which acts on
$\bA^{F^c}$. One obtains the closure $\bar Z$ of a symplectic leaf $Z$
by the Hamiltonian reduction of $T^*\bA^{F^c}$ by $T$, i.e., $\bar Z =
T^*\bA^{F^c}/\!/\!/\!/ \hat T = (\mu^{-1}(0) \cap T^*\bA^{F^c}) /\!/
T$.  This identifies with $\pr(T^*\bA^{F^c} \cap \mu^{-1}(0))
\subseteq X$. As before, we will also let $\mathfrak{t} = \Lie(T)$ and
$\hat {\mathfrak{t}} = \Lie(\hat T)$.

It is a general fact that the closures of the symplectic leaves are of
this form. There are only finitely many, as there are only finitely
many parabolic subgroups of $T^m$. This therefore determines the
symplectic leaves themselves: the leaf $Z$ is the complement in $\bar
Z$ of all proper subsets which are the closures of symplectic leaves.
However, multiple $\hat T$ can produce the same symplectic leaf. We
will therefore assume that $\hat T$ is maximal for its leaf.  Then,
the leaf $Z$ is the image under
$\pr$ of the union of the closed free $T$-orbits.  (Moreover, under this
assumption, one can easily check that this produces a bijection
between maximal $\hat T$ and symplectic leaves; see \cite[\S 2]{PW-ih},
where this is explained in terms of $F \subseteq \{1,\ldots,n\}$, and
the $F$ that occur in this way are the \emph{coloop-free flats}, which
are natural combinatorially defined subsets of $\{1,\ldots,n\}$. We
will not use these facts here.)

Let $Z$ be a symplectic leaf, $\hat T < T^m$ be a (in fact, the unique)
maximal corresponding parabolic subgroup, and $F\subseteq
\{1,\ldots,n\}$ be the subset as above of coordinates on which $\hat
T$ acts nontrivially.  The closure $\bar Z$ is then given by $\bar Z =
(\mu^{-1}(0) \cap T^*\bA^{F^c}) /\!/ T$.  Let $z \in Z$ and let $\tilde
z \in T^*\bA^{F^c}$ be a preimage of $z$ in $\mu^{-1}(0) \cap
T^*\bA^{F^c}$. 

Let $G \subseteq F^c$ be a subset with $|G| = \dim T$ such that $T$
acts with finite kernel on $\bA^G$, such that for all $i \in G$,
either $\tilde z(x_i) \neq 0$ or $\tilde z(y_i) \neq 0$.  Such a
subset must exist because $T \cdot \tilde z$ is a free orbit.
Moreover, by our unimodularity hypothesis, the kernel of $T$ on
$\bA^G$, \`a priori finite, must actually be trivial; thus, $T$ acts
generically faithfully on $\bA^G$. Without loss of generality (up to
swapping some of the $x_i$ with $y_i$), we can actually assume that
$\tilde z(x_i) \neq 0$ for all $i \in G$.  

We then define $(T^* \bA^n)^\circ$ as the complement of coordinate
hyperplanes on which $\tilde z$ is nonzero.  We similarly define $(T^*
\bA^H)^\circ := T^* \bA^H \cap (T^* \bA^n)^\circ$ and $(\bA^H)^{\circ}
:= \bA^H \cap (T^* \bA^n)^\circ$ for all $H \subseteq
\{1,\ldots,n\}$. In particular, $(\bA^G)^\circ$ is the locus where all
coordinates are nonzero, and $T$ acts freely and transitively on it.
It follows that the $T$-orbits in $(T^* \bA^{F^c})^\circ$, or
equivalently the $T^m$-orbits, are all closed and free, and that
$Z^\circ = \pr((T^* \bA^{F^c})^\circ \cap \mu^{-1}(0))$.  (We remark
that, if one desired, one could alternatively have defined
$(T^*\bA^{F^c})^\circ$ to be the maximal subset consisting of closed
$T$-orbits such that the $x_i$ coordinate is nonzero for all $i \in
G$; this is a larger open subset and depends only on $G$ and $Z$ and
not on $z$, but has the disadvantage of not being affine. We could
similarly consider any (affine) open $T$-stable subset thereof. We
would then set $T^*\bA^n = (T^*\bA^{F^c})^\circ \times T^* \bA^F$.)

Next, let us restrict the moment map to $T^*(\bA^{F^c})^{\circ}$.
This lands in $\mathfrak{t}^* \subseteq (\mathfrak{t}^m)^*$. By the
above, for every $z \in \mathfrak{t}^*$, the equations cutting out
$\mu^{-1}(z)$ in $(\bA^{F^c})^\circ$ uniquely solve for $x_i y_i, i \in G$
in terms of $x_j y_j, j \in F^c \setminus G$.  In other words, since $x_i$ are invertible on $(\bA^{F^c})^\circ$ for $i \in G$, these equations uniquely
solve for $y_i, i \in G$ in terms of the other coordinates. Hence,
% defining $(T^*\bA^H)^\circ := (T^* \bA^n)^{\circ} \cap T^*\bA^H$ and
% similarly $(\bA^H)^\circ := \bA^H \cap (T^* \bA^n)^\circ$,
we have an isomorphism 
\[
\pi^{F^c \setminus G} \times \mu \times \pi': T^*(\bA^{F^c})^{\circ}
\mathop{\to}^\sim (T^*\bA^{F^c \setminus G})^\circ
\times \mathfrak{t}^* \times (\bA^G)^\circ,
\]
with $\pi^{F^c \setminus G}$ the projection to $T^*\bA^{F^c \setminus
  G}$ and $\pi'$ the projection to $\bA^G \subseteq T^* \bA^G$ (just
the $x$ coordinates). Since $(\bA^G)^\circ \cong T$ consists of a
single free $T$-orbit and $(T^*\bA^{F^c})^\circ$ is affine, the above
produces an isomorphism $(T^* \bA^{F^c \setminus G})^\circ \cong
Z^\circ$.

The final assertion then follows immediately. The weights of the
coordinate functions $x_i, y_i, i \in F^c \setminus G$ on
$T^*\bA^{F^c\setminus G} \cong \bA^{\dim Z}$ are then explicitly given
as follows: for each $i \in F^c \setminus G$, let $r_{ij} \in \bZ$ be
the unique integers such that $x_i \prod_{j \in G} x_j^{r_{ij}}$ is
$T$-invariant. Then, $|x_i| := |x_i \prod_{j \in G} x_j^{r_{ij}}| = 1
+ \sum_{j \in G} r_{ij}$, and similarly we compute the degrees of the
$y_i$.

Next, write $T^m = T' \times \hat T$ for $T' \subseteq T^m$ some
connected subtorus such that the composition $T' \to T^m \to T$ is an
isomorphism.  Let $\mu_{T'}$ and $\mu_{\hat T}$ be the restricted
moment maps, valued in $(\mathfrak{t}')^*$ and $\hat
{\mathfrak{t}}^*$, respectively.  Then $\mu^{-1}(0) =
\mu_{\mathfrak{t}'}^{-1}(0) \cap \mu_{\hat {\mathfrak{t}}}^{-1}(0)$.
Let $\pi^F: T^*\bA^n \to T^*\bA^F$ be the projection.  Then the
projection to $T^*\bA^{F^c}$ produces an isomorphism 
\[
(\pi^F)^{-1}(z)
\cap \mu_{\mathfrak{t}'}^{-1}(0) \cong
\mu^{-1}(-\mu_{\mathfrak{t}'}(z)) \cap T^* \bA^{F^c},
\] where we use
in the first term on the RHS the isomorphism $\mathfrak{t}' \cong
\mathfrak{t}$.  Thus, restricted to $(T^*\bA^n)^{\circ} \cap
\mu_{\mathfrak{t}'}^{-1}(0)$, $\pi^F$ is a fibration with fibers
isomorphic to $T^*\bA^{F^c \setminus G}  \times
(\bA^G)^{\circ}$.  We conclude that $\pi^F \times \pi^{F^c \setminus
  G} \times \pi'$ induces an isomorphism
\[
\mu^{-1}(0) \cap (T^* \bA^n)^{\circ} \cong (\pi^F \times \pi^{F^c
  \setminus G} \times \pi')(\mu_{\hat {\mathfrak{t}}}^{-1}(0)) \cap
(T^* \bA^F \times (T^* \bA^{F^c\setminus G})^{\circ} \times (\bA^G)^{\circ}).
\]
Now, $\mu_{\hat {\mathfrak{t}}}$ is trivial in the $T^*\bA^{F^c}$ direction. That is,
 for $\mu^F_{\hat {\mathfrak{t}}}: T^*\bA^F \to \hat {\mathfrak{t}}$ the restricted moment map, we have
 $\mu_{\hat {\mathfrak{t}}} = \mu^F_{\hat {\mathfrak{t}}} \circ \pi^F$. We conclude that
\[
(T^*\bA^n)^\circ \cap \mu^{-1}(0) \cong ((\mu^F_{\hat {\mathfrak{t}}})^{-1}(0) \cap T^* \bA^F) \times (T^*\bA^{F^c \setminus G})^\circ \times (\bA^G)^{\circ}.
\]

Moreover, given functions on any two of the above three factors, if we
write them in terms of standard coordinate functions, the resulting
functions on $T^* \bA$ Poisson-commute.  By the unimodularity
condition, the action of $T$ on $(\bA^G)^\circ$ is free and
transitive. Therefore, we can replace each coordinate function $x_i$ or $y_i$
on $T^*\bA^F$ with its product by a monomial $x_i' := x_i \prod_{j \in
  G}x_j^{r_j}$ (or similarly $y_i' := y_i \prod_{j \in G}x_j^{s_j}$)
so that $x_i', y_i'$ are invariant under $T'$, where here the
$x_j$ are the coordinate functions in the $\bA^n$ direction (hence
when $j \in G$, they are coordinates on $\bA^G$).  The new $x_i',
y_i'$ still commute with all functions on $T^* \bA^{F^c \setminus G}
\times \bA^G$ when written in terms of standard coordinate functions.
Therefore, restricting to $\mu^{-1}(0)$ and quotienting by $T^m$
yields the $\bC^\times$-equivariant Poisson isomorphism
\[
(T^* \bA)^\circ /\!/\!/\!/ T^m \cong Z^\circ \times S. \qedhere
\]
% and by construction (each projection map was an inclusion of Poisson
% subalgebras), this map is Poisson.
\end{proof}

\subsection{Quantization}
Since we have a Zariski-local decomposition, we can immediately
conclude a quantization of $X^\circ$.  Let $A_Z$ and $A_{X}$ be the
standard deformation quantizations of the hypertoric cones $Z$ and
$S$.  Namely, these are given by quantum Hamiltonian reduction $A_Z =
\caD_{\hbar}(A^{F^c})/\!/\!/\!/ T :=
(\caD_{\hbar}(\bA^{F^c})/\mu_{q,F^c}(\mathfrak{t})
\caD(\bA^{F^c}))^T$, with $\mu_{q,F^c}: \mathfrak{t} \to
\caD_\hbar(\bA^{F^c})$ the quantum comoment map with
$\mu_{q,F^c}(\xi)$ the vector field given by the action of $\xi$,
viewed as a differential operator. Similarly define $A_{S}$ and $A_X$.
Since $X^\circ$ is the complement in $X$ of the images of some linear
hyperplanes, we can define the corresponding localization
$A_{X^\circ}$ by inverting the corresponding linear functions and
taking the $\hbar$-adic completion.  Note that $Z^\circ = Z \cap
X^\circ$; thus $A_{Z^\circ}$ is obtained from $A_Z$ by inverting the
same coordinate functions and completing.  We equip these algebras
with the Euler vector field which assigns the coordinate functions the
same degrees as in the graded algebras $\cO(S), \cO(Z)$, and $\cO(X)$.
Then $A_{Z^\circ} \hat \otimes_{\bC[\![\hbar]\!]} A_{S}$ is a
deformation quantization of $X^\circ$, which is
$\bC^\times$-compatible by Theorem \ref{t:hypertoric}.  (Note that
$\cO(Z)$ and $\cO(S)$ are not, in general, nonnegatively graded;
neither is, obviously, $A_{X^\circ}$ nor $A_{Z^\circ}$.  Thus, the
quantizations are not completed Rees algebras of nonnegatively
filtered algebras.)
\begin{theorem}\label{t:q-hypertoric}
We have an isomorphism of $\bC^\times$-compatible deformation
quantizations,
\begin{equation}
  A_{X^\circ}  \cong
  A_{Z^\circ} \hat  \otimes_{\bC[\![\hbar]\!]} A_{S}.
\end{equation}
\end{theorem}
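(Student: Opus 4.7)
The plan is to lift the proof of Theorem \ref{t:hypertoric} to the quantum setting, following the classical construction step-by-step but working with the Weyl algebra $\caD_\hbar(\bA^n)$ in place of the coordinate ring $\cO(T^*\bA^n)$. Since the classical decomposition was proved Zariski-locally by an explicit change of variables (rather than by abstract formal-local uniqueness, as in Theorem \ref{t:quant}), the natural quantum approach is to lift that same change of variables. First I would localize $\caD_\hbar(\bA^n)$ by inverting the coordinates $x_j$ for $j \in G$ (and taking the $\hbar$-adic completion), producing a quantization of $(T^*\bA^n)^\circ$.

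Next I would construct the quantum analogue of the twisted coordinates. Classically, $x_i' = x_i \prod_{j \in G} x_j^{r_{ij}}$ and $y_i' = y_i \prod_{j \in G} x_j^{s_{ij}}$ were chosen to be $T'$-invariant and to Poisson-commute with the coordinates on the other factors. At the quantum level, since for $i \notin G$ the commutator $[x_i,x_j]=[y_i,x_j]=0$ for $j \in G$, the same monomials $p_i := \prod_{j \in G} x_j^{r_{ij}}$ are central with respect to $x_i$ and $y_i$. Setting $x_i' := x_i p_i$ and $y_i' := p_i^{-1} y_i$ gives $[y_i',x_i']=\hbar$ and the new generators commute with $x_j$ ($j\in G$) on the nose (not just modulo $\hbar$). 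The generators split, accordingly, into three mutually-commuting subalgebras within the localized Weyl algebra, lifting the classical decomposition of $T^*(\bA^{F^c})^\circ \cong (T^*\bA^{F^c\setminus G})^\circ \times \mathfrak{t}^* \times (\bA^G)^\circ$, with an analogous construction for the $F$-indexed coordinates in the other factor.

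The step requiring real care is the interaction with the quantum moment map. Classically, the equations $\mu_{\mathfrak{t}'}(\xi)=0$ solve uniquely for $y_j, j\in G$, in terms of the other coordinates on $(\bA^{F^c})^\circ$. Quantum-mechanically one must verify the corresponding statement: the quantum comoment map $\mu_{q,\mathfrak{t}'}$, after inverting the $x_j$, allows one to solve for each $y_j$ ($j \in G$) as an element of $\caD_\hbar$ modulo the two-sided ideal generated by $\mu_{q,\mathfrak{t}'}(\mathfrak{t}')$. Since the leading-order (classical) solution exists and is compatible with the twisting, one can produce the quantum solution inductively modulo successive powers of $\hbar$; this is where the $\hbar$-adic completion is essential. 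Having done this, the $T'$-quotient of the localized Weyl algebra decomposes as a completed tensor product of the quantization of $(T^*\bA^{F^c\setminus G})^\circ \cong Z^\circ$ and the quantization of $T^*\bA^F$, with the $(\bA^G)^\circ$ factor reduced away by invariants.

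Finally I would take $\hat T$-invariants and quotient by the quantum moment map $\mu_{q,\hat{\mathfrak{t}}}$, producing $A_S$ from the $F$-factor, while the $Z^\circ$-factor is untouched because $\mu_{q,\hat{\mathfrak{t}}}$ depends only on the $F$-variables (as in the classical proof). The resulting $\bC^\times$-grading is preserved throughout because each substitution is homogeneous. The main obstacle, as mentioned, is the $\hbar$-adic bookkeeping in the quantum moment-map reduction: one must check that the subalgebras built from the twisted coordinates really remain mutually commuting after passing to the reduction (and not only modulo $\hbar$), and that the tensor product is a completed tensor product over $\bC[\![\hbar]\!]$ in the correct topology. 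Uniqueness of the resulting quantization (given that both sides quantize the same $\bC^\times$-Poisson variety $X^\circ$) can be invoked as a sanity check via Theorem \ref{t:quant}, using that hypertoric cones have symplectic singularities so all Poisson vector fields on $S$ are Hamiltonian.
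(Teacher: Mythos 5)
Your proposal is correct and follows essentially the same route as the paper: the paper's proof simply observes that both sides quantize $X^\circ$ and invokes a straightforward quantum analogue of the explicit change-of-variables argument proving Theorem \ref{t:hypertoric}, which is exactly the lift you carry out (twisted invertible-monomial coordinates, solving the quantum moment map equations for $y_j$, $j \in G$, and reducing by $T'$ and $\hat T$ separately). Your write-up just supplies more of the $\hbar$-adic details than the paper records.
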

The fact that both sides of the isomorphisms above are quantizations
of $X^\circ$ is immediate; to prove the two are isomorphic follows
from a straightforward quantum analogue of the proof of Theorem
\ref{t:hypertoric}.  We note that the theorem yields a positive answer
to Question \ref{q:q-slice} in the case of the quantization
$A_{X^\circ}$.
\begin{remark}
  In the previous section, we considered the universal deformation of
  algebras of invariant differential operators, given by spherical
  symplectic reflection algebras.  One can similarly form the
  universal family of (graded) deformation quantizations of the
  quantized hypertoric varieties $A_X$ above, by performing quantum
  Hamiltonian reduction not merely at the $0$ character but at
  arbitrary characters of $\mathfrak{t}^m$ (these were considered, for
  example, in \cite{BLPW-HCO}).  Then, the above theorem goes through
  for these as well: given a character $\zeta \in (\mathfrak{t}^m)^*$,
  one considers the restriction $\zeta|_{\hat{\mathfrak{t}}}$ as in
  the proof of Theorem \ref{t:hypertoric}, and obtains the
  $\bC^\times$-equivariant (continuous $\bC[\![\hbar]\!]$-algebra)
  isomorphism:
\begin{equation}
  A^{\hbar \zeta}_{X^\circ} \cong
  A_{Z^\circ} \hat  \otimes_{\bC[\![\hbar]\!]} A_{S}^{\hbar \zeta|_{\hat{\mathfrak t}}},
\end{equation}
where $A_{X}^{\hbar \zeta} := (\caD_\hbar(\bA^{n})/(\mu_{q}(t)-\hbar
\zeta(t))_{t \in \mathfrak{t}^m} \caD_\hbar(\bA^{n}))^{T^m}$,
$A_{X^\circ}^{\hbar \zeta}$ is its completed localization, and we
similarly define $A_{S}^{\zeta|_{\hat{\mathfrak t}}}$. Note that we
need not deform $A_{Z^\circ}$; this makes sense since $Z^\circ$ is
just the complement of some hyperplanes in a symplectic affine space.
We conclude that Question \ref{q:q-slice} has a positive answer in the
case of these quantizations.  (One can also put the deformations
together into a family and produce an isomorphism resembling that of
Remark \ref{r:hypertoric-family} and analogous to Theorem
\ref{t:sra-gc}).
\end{remark}
\begin{remark}
  It would also be interesting to prove a sheafified version of the
  decomposition, parallel to the decomposition of \cite[Theorem
  2.5.3]{Los-csra}, which produces a global statement for sheaves on
  $Z$ of completed quantizations, and to study how the decomposition
  varies as $z$ (or the line $\bC^\times \cdot z$) varies.
  % in this context. For the sheafified statement, we expect that, as
  % in \cite{Los-csra}, one needs to twist the sheaves that appear.
\end{remark}

\bibliographystyle{amsalpha}

\def\cprime{$'$} \def\cprime{$'$} \def\cprime{$'$} \def\cprime{$'$}
  \def\cprime{$'$}
\providecommand{\bysame}{\leavevmode\hbox to3em{\hrulefill}\thinspace}
\providecommand{\MR}{\relax\ifhmode\unskip\space\fi MR }
% \MRhref is called by the amsart/book/proc definition of \MR.
\providecommand{\MRhref}[2]{%
  \href{http://www.ams.org/mathscinet-getitem?mr=#1}{#2}
}
\providecommand{\href}[2]{#2}

%\bibliography{../bibtex/master}

\def\cprime{$'$} \def\cprime{$'$} \def\cprime{$'$} \def\cprime{$'$}
  \def\cprime{$'$}
\providecommand{\bysame}{\leavevmode\hbox to3em{\hrulefill}\thinspace}
\providecommand{\MR}{\relax\ifhmode\unskip\space\fi MR }
% \MRhref is called by the amsart/book/proc definition of \MR.
\providecommand{\MRhref}[2]{%
  \href{http://www.ams.org/mathscinet-getitem?mr=#1}{#2}
}
\providecommand{\href}[2]{#2}
\begin{thebibliography}{BLPW12}

\bibitem[BD00]{BD-gtthm}
R.~Bielawski and A.~S. Dancer, \emph{The geometry and topology of toric
  hyperk\"ahler manifolds}, Comm. Anal. Geom. \textbf{8} (2000), no.~4,
  727--760. \MR{1792372}

\bibitem[BDMN]{BDMN-ccdqsav}
G.~Bellamy, C.~Dodd, K.~McGerty, and T.~Nevins, \emph{Categorical cell
  decomposition of quantized symplectic algebraic varieties}, arXiv:1311.6804.

\bibitem[BE09]{BE-pirfrCa}
R.~Bezrukavnikov and P.~Etingof, \emph{Parabolic induction and restriction
  functors for rational {C}herednik algebras}, Selecta Math. (N.S.) \textbf{14}
  (2009), no.~3-4, 397--425. \MR{2511190 (2010e:20007)}

\bibitem[Bea00]{Beauss}
A.~Beauville, \emph{Symplectic singularities}, Invent. Math. \textbf{139}
  (2000), no.~3, 541--549, arXiv:math/9903070. \MR{MR1738060 (2001f:14008)}

\bibitem[Bel09]{Belscms}
G.~Bellamy, \emph{On singular {C}alogero-{M}oser spaces}, Bull. Lond. Math.
  Soc. \textbf{41} (2009), no.~2, 315--326, arXiv:0707.3694. \MR{MR2496507
  (2010h:14022)}

\bibitem[BLPW]{BLPW-qcsr2}
T.~Braden, A.~Licata, N.~Proudfoot, and B.~Webster, \emph{Quantizations of
  conical symplectic resolutions {II}: category {O}}, arXiv:1407.0964.

\bibitem[BLPW12]{BLPW-HCO}
\bysame, \emph{Hypertoric category {$\mathscr{O}$}}, Adv. Math. \textbf{231}
  (2012), no.~3-4, 1487--1545, arXiv:1010.2001. \MR{2964613}

\bibitem[BPW12]{BPW-qcsr}
T.~Braden, N.~Proudfoot, and B.~Webster, \emph{Quantizations of conical
  symplectic resolutions {I}: local and global structure}, arXiv:1208.3863,
  2012.

\bibitem[BS13]{BSsra2}
G.~Bellamy and T.~Schedler, \emph{On the (non)existence of symplectic
  resolutions for imprimitive symplectic reflection groups}, arXiv:1309.3558,
  2013.

\bibitem[EG02]{EGsra}
P.~Etingof and V.~Ginzburg, \emph{Symplectic reflection algebras,
  {C}alogero-{M}oser space, and deformed {H}arish-{C}handra homomorphism},
  Invent. Math. \textbf{147} (2002), no.~2, 243--348. \MR{MR1881922
  (2003b:16021)}

\bibitem[ES10]{ESdm}
P.~Etingof and T.~Schedler, \emph{Poisson traces and {$\mathcal{D}$}-modules on
  {P}oisson varieties}, Geom. Funct. Anal. \textbf{20} (2010), no.~4, 958--987,
  arXiv:0908.3868, with an appendix by I. Losev.

\bibitem[ES12]{ESsym}
\bysame, \emph{Zeroth {P}oisson homology of symmetric powers of isolated
  quasihomogeneous surface singularities}, J. Reine Angew. Math. \textbf{667}
  (2012), 67--88, arXiv:0907.1715. \MR{2929672}

\bibitem[ES16]{ES-dmlv}
\bysame, \emph{Coinvariants of {L}ie algebras of vector fields on algebraic
  varieties}, accepted to Asian J. Math, arXiv:1211.1883, 2016.

\bibitem[Fer70]{Fer-chdfcas}
A.~Ferrari, \emph{Cohomology and holomorphic differential forms on complex
  analytic spaces}, Ann. Scuola Norm. Sup. Pisa (3) \textbf{24} (1970), 65--77.
  \MR{0274810}

\bibitem[Fer71]{Fer-cfdssac}
\bysame, \emph{Coomologia e forme differenziali sugli spazi analitici
  complessi}, Ann. Scuola Norm. Sup. Pisa (3) \textbf{25} (1971), 469--480.
  \MR{0414922}

\bibitem[Har72]{Har-adrc}
R.~Hartshorne, \emph{Algebraic de {R}ham cohomology}, Manuscripta Math.
  \textbf{7} (1972), 125--140. \MR{0313255 (47 \#1810)}

\bibitem[Har75]{Har-drcav}
\bysame, \emph{On the {D}e {R}ham cohomology of algebraic varieties}, Inst.
  Hautes \'Etudes Sci. Publ. Math. (1975), no.~45, 5--99. \MR{0432647 (55
  \#5633)}

\bibitem[Kal06]{Kalss}
D.~Kaledin, \emph{Symplectic singularities from the {P}oisson point of view},
  J. Reine Angew. Math. \textbf{600} (2006), 135--156. \MR{MR2283801
  (2007j:32030)}

\bibitem[Kal09]{Kal-gtsr}
\bysame, \emph{Geometry and topology of symplectic resolutions}, Algebraic
  geometry---{S}eattle 2005. {P}art 2, Proc. Sympos. Pure Math., vol.~80, Amer.
  Math. Soc., Providence, RI, 2009, arXiv:math/0608143, pp.~595--628.
  \MR{2483948 (2010a:14023)}

\bibitem[Lam99]{Lam-lmr}
T.~Y. Lam, \emph{Lectures on modules and rings}, Graduate Texts in Mathematics,
  vol. 189, Springer-Verlag, New York, 1999. \MR{1653294 (99i:16001)}

\bibitem[Los10]{Losqsa}
I.~Losev, \emph{Quantized symplectic actions and {$W$}-algebras}, J. Amer.
  Math. Soc. \textbf{23} (2010), no.~1, 35--59. \MR{MR2552248}

\bibitem[Los12]{Los-csra}
\bysame, \emph{Completions of symplectic reflection algebras}, Selecta Math.
  (N.S.) \textbf{18} (2012), no.~1, 179--251, arXiv:1001.0239. \MR{2891864}

\bibitem[Nam13a]{Nam-ess}
Y.~Namikawa, \emph{Equivalence of symplectic singularities}, Kyoto J. Math.
  \textbf{53} (2013), no.~2, 483--514. \MR{3079311}

\bibitem[Nam13b]{Nam-fgss}
\bysame, \emph{Fundamental groups of symplectic singularities},
  arXiv:1301.1008, 2013.

\bibitem[Nam14]{Nam-ftss}
\bysame, \emph{A finiteness theorem on symplectic singularities},
  arXiv:1411.5585, 2014.

\bibitem[PS16]{PS-pdrhhvnc}
N.~Proudfoot and T.~Schedler, \emph{Poisson-de {R}ham homology of hypertoric
  varieties and nilpotent cones}, arXiv:1405.0743, to appear in Selecta
  Mathematica, DOI 10.1007/s00029-016-0232-3, 2016.

\bibitem[PW07]{PW-ih}
N.~Proudfoot and B.~Webster, \emph{Intersection cohomology of hypertoric
  varieties}, J. Algebraic Geom. \textbf{16} (2007), no.~1, 39--63. \MR{2257319
  (2008m:14037)}

\bibitem[Wei83]{We}
A.~Weinstein, \emph{The local structure of {P}oisson manifolds}, J.
  Differential Geom. \textbf{18} (1983), no.~3, 523--557.

\bibitem[Xu14]{Xu-fafg}
C.~Xu, \emph{Finiteness of algebraic fundamental groups}, Compos. Math.
  \textbf{150} (2014), no.~3, 409--414. \MR{3187625}

\end{thebibliography}
\end{document}